\providecommand{\U}[1]{\protect\rule{.1in}{.1in}}
\renewcommand{\emph}[1]{\textcolor{red}{#1}}
\numberwithin{equation}{section}
\newtheorem{theorem}{Theorem}[section]
\newtheorem{condition}[theorem]{Assumption}
\newtheorem{definition}[theorem]{Definition}
\newtheorem{example}[theorem]{Example}
\newtheorem{lemma}[theorem]{Lemma}
\newtheorem{notation}[theorem]{Notation}
\newtheorem{proposition}[theorem]{Proposition}
\newtheorem{remark}[theorem]{Remark}
\newenvironment{proof}[1][Proof]{\noindent\textbf{#1.} }{\ \rule{0.5em}{0.5em}}
\begin{document}

\title{Infinite horizon quadratic backward stochastic differential equations driven
by $G$-Brownian motion}
\author{Yiqing Lin \thanks{MOE-LSC and School of Mathematical Sciences, Shanghai Jiao Tong University, Shanghai, 200240, China. \texttt{yiqing.lin@sjtu.edu.cn}}
\and Yifan Sun \thanks{Department of Applied Mathematics, The Hong Kong Polytechnic
University, Hong Kong, China. \texttt{yifansun1111@gmail.com}.}
\and Falei Wang\thanks{Zhongtai Securities Institute for Financial Studies, Shandong University, Jinan 250100, China.
\texttt{flwang2011@gmail.com}.}}
\date{}
\maketitle

\begin{abstract}
The aim is to prove the well-posedness of infinite horizon backward stochastic
differential equations driven by $G$-Brownian motion ($G$-BSDEs) with
quadratic generators. To this end, we provide a full construction of explicit
solutions to linear $G$-BSDEs with unbounded coefficients and the
linearization method under the quadratic assumption. In addition, the comparison
theorems for both finite and infinite horizon $G$-BSDEs are established.

\medskip

\textbf{Keywords:} Infinite horizon BSDEs, Quadratic BSDEs, $G$-expectations

\textbf{2010 MSC:} 60H10, 60H30

\end{abstract}

\section{Introduction}

This paper investigates the following type of backward stochastic differential
equations driven by $G$-Brownian motion ($G$-BSDEs) on the infinite horizon:
for any $0\leq t\leq T<\infty$,%
\begin{equation}
Y_{t}=Y_{T}+\int_{t}^{T}f\left(  s,Y_{s},Z_{s}\right)  ds+\sum_{i,j=1}^{d}%
\int_{t}^{T}g^{ij}\left(  s,Y_{s},Z_{s}\right)  d\left\langle B\right\rangle
_{s}^{ij}-\int_{t}^{T}Z_{s}dB_{s}-\left(  K_{T}-K_{t}\right)  , \label{i0}%
\end{equation}
where the generators are supposed to be quadratic in the $z$-component. The
purpose is to find a triplet $(Y,Z,K)$ that satisfies this equation for any
$0\leq t\leq T<\infty$.

In the classical probability framework, nonlinear backward stochastic
differential equations (BSDEs) on finite horizon, firstly introduced by
Pardoux and Peng \cite{PP1990} in 1990, have become essential tools in several
fields such as stochastic control, partial differential equations (PDEs) and
mathematical finance. There has been significant interest in weakening the
conditions for classical BSDEs on finite horizon under the Lipschitz
assumption and applying generalized BSDEs to various problems.

In 1997, Kobylanski \cite{K2000} first explored BSDEs whose generators have
quadratic growth in the $z$-component (quadratic BSDEs) using exponential
change and approximation methods. Subsequently, Lepeltier and San Martin
\cite{LS1998} established the existence result for BSDEs with
superlinear-quadratic generators. For cases with unbounded terminal
conditions, Briand and Hu \cite{BH2006} addressed the solvability of this type
of quadratic BSDEs and, later in \cite{BH2008}, they proved the uniqueness of
the solutions under the assumption that the generators are convex (or
concave). In addition to the research of well-posedness under various
conditions, quadratic BSDEs have been also extensively studied for their
applications, such as utility maximization, indifference pricing and financial
market equilibrium problems (see, e.g.,
\cite{AIR2010,BE2004,HIM2005,HLT2024,XZ2018}). For further results related to
quadratic BSDEs, we refer to \cite{AIR2010,BE2013,FHT2023,L2020b,T2008} among others.

The study of BSDEs has also been extended to the context of infinite horizon.
Fuhrman and Tessitore \cite{FT2004} and Hu and Tessitore \cite{HT2007}
investigated infinite horizon BSDEs in Hilbert spaces based on the connection
between BSDEs and elliptic PDEs. Briand and Confortola \cite{BC2008a} later
established the well-posedness of quadratic BSDEs on the infinite time
horizon. A special type of infinite horizon BSDEs, called ergodic BSDEs, was
formulated by Fuhrman, Hu and Tessitore \cite{FHT2009} motivated by the
ergodic stochastic control problem, and it can be applied to study the
long-time behavior of solutions to parabolic PDEs (see, e.g., \cite{CFP2016}).
More recently, it was shown by Liang and Zariphopoulou \cite{LZ2017} that
ergodic BSDEs with quadratic generators serve as a useful tool for
constructing homothetic examples of forward performance processes, which are
the forward counterparts of value function processes in utility maximization
problems. This connection has also been studied by \cite{HLT2020,JL2023}.

The aforementioned literature on BSDEs is confined to the context of classical
probability space. Recently, BSDE theory has been explored in the nonlinear
expectation framework, particularly in the $G$-expectation space proposed by
Peng \cite{Peng2007,Peng2008b}. The $G$-expectation, a typical type of
sublinear expectation, is constructed via the so-called $G$-heat equations on
the canonical space, with the associated canonical process being a
$G$-Brownian motion. The $G$-It\^{o}'s calculus is also developed. We refer
the readers to Peng's book \cite{PengBOOK} and related references for more
introduction and development of $G$-expectation theory.

Under finite horizon setting, Hu, Ji, Peng and Song \cite{HJPS2014a} showed
the well-posedness of $G$-BSDEs under the Lipschitz condition using a PDE
method and Galerkin approximation. Later, Hu, Lin and Soumana Hima
\cite{HLS2018} established the existence of the solution to quadratic (in
$z$-component) $G$-BSDEs via a discretization approach and proved the
uniqueness via a priori estimates based on the concept of $G$-BMO martingale
generators and related $G$-Girsanov's theorem. More recently,
\cite{CT2020,GLX2023,HTW2022} have also contributed to quadratic $G$-BSDEs
under various settings and conditions. Regarding infinite horizon cases, Hu
and Wang \cite{HW2018} and Hu, Sun and Wang \cite{HSW2022} studied the
infinite horizon and ergodic BSDEs driven by $G$-Brownian motion and within a
general nonlinear expectation space, respectively. Subsequently, Sun and Wang
\cite{SW2024} dealt with the Markovian infinite horizon $G$-BSDEs and ergodic
$G$-BSDEs with quadratic generators by a truncation method and accordingly
generalized robust forward performance processes to the $G$-framework.

It is worth mentioning that second order BSDEs, proposed by Soner, Touzi and
Zhang in \cite{STZ2012,STZ2013}, share a lot of similarities with $G$-BSDEs.
This type of BSDEs has been further developed by Possama\"{\i}, Zhou
\cite{PZ2013} and Lin \cite{L2016} under the quadratic condition. We refer the
reader to \cite{LRTY2020,MPZ2015,PTZ2018} and the references therein for more
discussions on second order BSDEs.

In this paper, the objective is to investigate the well-posedness of the BSDE
(\ref{i0}). To this end, similarly to previous works
\cite{BH1998,HSW2022,HW2018}, we first construct the explicit solutions to
linear BSDEs, then obtain the estimates of the difference of two BSDEs using
linearization, and finally investigate the convergence of a sequence of finite
horizon BSDEs as the horizon approaches infinity, with the limit being the
desired solution to the infinite horizon BSDE. However, all steps require
reconsideration due to the $G$-expectation setting and the quadratic assumptions.

We first study the linear $G$-BSDEs with unbounded coefficients, aiming to
provide an explicit representation of the solutions. Specifically, we deal
with the case when the coefficients for $z$-components are $G$-BMO martingale
generators. We recall that Theorem 3.2 in \cite{HJPS2014b} gives the explicit
solution to linear BSDEs driven by $1$-dimensional $G$-Brownian motion with
uniformly bounded coefficients via dual stochastic differential equation
driven by $G$-Brownian motion ($G$-SDE) in an auxiliary extended
$2$-dimensional $\tilde{G}$-expectation space. However, this result is not
applicable in our setting due to the unboundedness of the coefficients, and
the high dimensionality of the $G$-Brownian motion also increases the
complexity of the problem. To address this difficulty, we employ the dual
method in an extended space and then, with the help of $G$-Girsanov's formula
with respect to $G$-BMO martingale generators, seek to eliminate the unknown
parts via taking conditional expectations. The space extension for the
$1$-dimensional case is similar to that in \cite{HJPS2014b}. However,
constructing the extended space for the general $G$-framework with higher
dimensionality is more complex. For clarity, we detail the construction
procedures for the cases $d=1$, $d=2$, and the general case, respectively. We
then represent the solutions to linear $G$-BSDEs as conditional expectations
transformed by Girsanov's formula in the extended space.

A linearization method for quadratic generators is derived with the help of
properties of $G$-BMO martingale generators, and we also obtain an estimate
for a type of $G$-BSDEs. Combining aforementioned technical lemmas, we
establish the comparison theorems for both finite and infinite horizon
quadratic $G$-BSDEs and, finally, the existence and uniqueness result for
infinite horizon cases.

Our contribution in this paper is threefold. Firstly, we provide a detailed
construction of solutions to linear BSDEs driven by multi-dimensional
$G$-Brownian motion, extending existing results from cases with bounded
coefficients to those with unbounded coefficients. Secondly, we refine the
linearization method under quadratic conditions, which complements the theory.
Thirdly, by combining the above techniques, we establish the solvability for
infinite horizon quadratic $G$-BSDEs as well as comparison theorems for both
finite and infinite horizons.

This paper is organized as follows. In Section \ref{Pre}, we introduce the
basic setups and results of the $G$-expectation framework especially those
related to quadratic $G$-BSDEs. Section \ref{Explicit solution} is devoted to
the explicit solutions of linear $G$-BSDEs with unbounded coefficients on
finite horizon, and the well-posedness of infinite horizon quadratic $G$-BSDEs
is established in Section \ref{infQGBSDE}.

\section{Preliminaries\label{Pre}}

Throughout this paper, we will make use of the following notations:

\begin{itemize}
\item On the Euclidean space $\mathbb{R}^{n}$, $xy:=$ scalar product of
$x,y\in\mathbb{R}^{n}$ and $|x|:=$ norm of $x\in\mathbb{R}^{n}$;

\item $I_{d}:=d\times d$ identity matrix;

\item $J_{d}:=d\times d$ constant matrix where every entry is $1$;

\item $O_{m\times n}:=m\times n$ zero matrix and $O_{d}:=O_{d\times d}$;

\item $\mathbb{S}(d):=$ the space of all $d\times d$ symmetric matrices;

\item For $A\in\mathbb{R}^{d\times d}$, $\operatorname*{tr}[A]:=$ trace of $A$;

\item For $A\in\mathbb{R}^{n\times m}$, $A^{\top}:=$ transpose of $A$;

\item For $A\in\mathbb{R}^{d\times d}$ of full rank $d$, $A^{-1}:=$ inverse of
$A$, and $A^{-1,\top}:=(A^{-1})^{\top}=(A^{\top})^{-1}.$
\end{itemize}

Now we recall some definitions and results under $G$-framework which will be
frequently used in the sequel.

\subsection{$G$-Brownian motion and $G$-stochastic integrals}

We will work throughout with the space $\Omega:=\{\omega\in C_{0}%
([0,\infty);\mathbb{R}^{d}),\omega_{0}=0\}$ equipped with the distance%
\[
\rho\left(  \omega^{1},\omega^{2}\right)  :=\sum_{N=1}^{\infty}2^{-N}\left[
\left(  \max_{t\in\left[  0,N\right]  }\left\vert \omega_{t}^{1}-\omega
_{t}^{2}\right\vert \right)  \wedge1\right]  ,
\]
where $C_{0}([0,\infty);\mathbb{R}^{d})$ is the space of all $\mathbb{R}^{d}%
$-valued continuous functions. On this space, let $B$ be the canonical
process, $\mathbb{P}_{0}$ the Wiener measure, $\mathbb{F}:=\{\mathcal{F}%
_{t}\}_{t\geq0}$ the natural filtration generated by $B$ and $\mathcal{B}%
(\Omega)$ the Borel $\sigma$-algebra of $\Omega$. For each fixed $T>0$, we
write $\Omega_{T}:=\{\omega_{\cdot\wedge T},\omega\in\Omega\}$ and set
\begin{align*}
L_{ip}(\Omega_{T})  &  :=\left\{  \varphi(B_{t_{1}\wedge T},\ldots
,B_{t_{n}\wedge T}),n\geq1,t_{1},\ldots,t_{n}\in\lbrack0,\infty),\varphi\in
C_{b,Lip}(\mathbb{R}^{d\times n})\right\}  ,\\
L_{ip}(\Omega)  &  :=\bigcup\nolimits_{T>0}L_{ip}(\Omega_{T}),
\end{align*}
with $C_{b,Lip}(\mathbb{R}^{d\times n})$ being the space of all $\mathbb{R}%
$-valued bounded Lipschitz functions on $\mathbb{R}^{d\times n}$.

For a given monotonic and sublinear function $G:\mathbb{S}(d)\rightarrow
\mathbb{R}$, Peng \cite{Peng2007,Peng2008b} constructed a consistent sublinear
expectation $(\mathbb{\hat{E}}_{t})_{t\geq0}$ on $(\Omega,L_{ip}(\Omega))$ via
a fully nonlinear PDE
\[
\left\{
\begin{array}
[c]{l}%
\partial_{t}u\left(  t,x\right)  -G\left(  \partial_{xx}^{2}u\left(
t,x\right)  \right)  =0,\text{ }\left(  t,x\right)  \in\lbrack0,\infty
)\times\mathbb{R}^{d},\\
u\left(  0,x\right)  =\varphi\left(  x\right)  ,
\end{array}
\right.
\]
which is called $G$-heat equation. More precisely, $\mathbb{\hat{E}%
}:=\mathbb{\hat{E}}_{0}$ satisfies $\mathbb{\hat{E}}[\varphi(x+B_{t})]=u(t,x)$
for all $(t,x)\in\lbrack0,\infty)\times\mathbb{R}^{d}$. We call $\mathbb{\hat
{E}}:=\mathbb{\hat{E}}_{0}$ the $G$-expectation and $\mathbb{\hat{E}}_t$ the
conditional $G$-expectation at $t$. For $p\geq1$, denote by $\mathbb{L}%
_{G}^{p}(\Omega)$ (resp., $\mathbb{L}_{G}^{p}(\Omega_{T})$) the completion of
$L_{ip}(\Omega)$ (resp., $L_{ip}(\Omega_{T})$) under the norm $\Vert\cdot
\Vert_{\mathbb{L}_{G}^{p}}:=(\mathbb{\hat{E}}[|\cdot|^{p}])^{1/p}$, and, in
turn, $(\mathbb{\hat{E}}_{t})_{t\geq0}$ can be continuously extended to
$\mathbb{L}_{G}^{1}(\Omega)$ under $\Vert\cdot\Vert_{\mathbb{L}_{G}^{1}}$. The
complete space $(\Omega,\mathbb{L}_{G}^{1}(\Omega),\mathbb{\hat{E}})$ is
called the $G$-expectation space, on which the canonical process
$B=(B^{i})_{i=1}^{d}$ is called a $d$-dimensional $G$-Brownian motion. Herein,
we only consider the non-degenerate case, namely, there exists a
${\underline{\sigma}}>0$ such that $G(A)-G(B)\geq\frac{1}{2}{\underline
{\sigma}}^{2}\operatorname*{tr}[A-B]$ for any $A\geq B$. In this case, there
exists a bounded, convex and closed set $\Theta\subset\mathbb{R}^{d\times d}$
of invertible matrices such that%
\begin{equation}
G\left(  A\right)  =\frac{1}{2}\sup_{\theta\in\Theta}\operatorname*{tr}\left[
\theta\theta^{\top}A\right] ~ \text{ for }A\in\mathbb{S}(d). \label{G}%
\end{equation}

For $G$-Brownian motion $B=(B^{i})_{i=1}^{d}$, its quadratic variation process
matrix $\langle B\rangle:=(\langle B\rangle^{ij})_{i,j=1}^{d}$ is defined by,
for $t\geq0$,%
\[
\left\langle B\right\rangle _{t}^{ij}:=\left\langle B^{i},B^{j}\right\rangle
_{t}:=\lim_{\mu(\pi_{t}^{N})\rightarrow0}\sum_{k=0}^{N-1}\left(
B_{t_{k+1}^{N}}^{i}-B_{t_{k}^{N}}^{i}\right)  \left(  B_{t_{k+1}^{N}}%
^{j}-B_{t_{k}^{N}}^{j}\right)  =B_{t}^{i}B_{t}^{j}-\int_{0}^{t}B_{s}^{i}%
dB_{s}^{j}-\int_{0}^{t}B_{s}^{j}dB_{s}^{i},
\]
where $\pi_{t}^{N}:=\{0=t_{0}^{N},t_{1}^{N},\ldots,t_{N}^{N}=t\}$,
$N=1,2,\ldots,$ is a sequence of partitions of $[0,t]$ and $\mu(\pi_{t}%
^{N}):=\max\{|t_{i+1}^{N}-t_{i}^{N}|,i=0,1,\ldots,N-1\}$. $\langle
B\rangle^{ij}$ is called the mutual variation process of $B^{i}$ and $B^{j}$.
According to \cite[Corollary 3.5.8]{PengBOOK}, we know that $\langle
B\rangle_{t}\in\{t\theta\theta^{T},\theta\in\Theta\}$ and, for any
$i,j=1,\ldots,d$, there exist $\bar{\sigma}_{ij}^{2}\geq0$ such that
$d|\langle B\rangle_{t}^{ij}|\leq\bar{\sigma}_{ij}^{2}dt$. For convenience we
denote
\[
\bar{\sigma}_{\Sigma}^{2}:=\sum_{i,j=1}^{d}\bar{\sigma}_{ij}^{2}. %
\]

It was shown by \cite[Theorem 52]{DHP2011} that the $G$-expectation can be
represented as an upper expectation%
\[
\mathbb{\hat{E}}\left[  X\right]  =\sup_{\mathbb{P}\in\mathcal{P}_{G}%
}\mathbb{E}^{\mathbb{P}}\left[  X\right] ~ \text{ for all }X\in\mathbb{L}%
_{G}^{1}(\Omega),
\]
where $\mathcal{P}_{G}$ is a weakly compact family of probability measures on
$(\Omega,\mathcal{B}(\Omega))$. We can accordingly define the Choquet capacity
associated to $\mathcal{P}_{G}$ by%
\[
c\left(  A\right)  :=\sup_{\mathbb{P}\in\mathcal{P}_{G}}\mathbb{P}\left(
A\right) ~ \text{ for }A\in\mathcal{B}(\Omega).
\]
We call a set $A\in\mathcal{B}(\Omega)$ is polar if $c(A)=0$ and a property
holds \textquotedblleft quasi-surely" (q.s.) if it holds outside a polar set.
Throughout the paper, we do not distinguish two random variables $X$ and $Y$
if $X=Y$ q.s..

We also define $\mathbb{L}_{G}^{\infty}(\Omega)$ (resp., $\mathbb{L}%
_{G}^{\infty}(\Omega_{T})$) by the completion of $L_{ip}(\Omega)$ (resp.,
$L_{ip}(\Omega_{T})$) under the norm $\Vert\cdot\Vert_{\mathbb{L}_{G}^{\infty
}}:=\inf\{M\geq0,|\cdot|\leq M$ q.s.$\}$. We remark here that a bounded random
variable $\xi\in\mathbb{L}_{G}^{1}(\Omega)$ may not belong to $\mathbb{L}%
_{G}^{\infty}(\Omega)$, which is different from the result in the classical
probability framework.

Next we list the spaces of stochastic processes under $G$-framework on finite
horizon $[0,T]$ for $T>0$:

\begin{itemize}
\item $\mathcal{H}_{G}^{0}(0,T):=\{\eta_{t}\left(  \omega\right)  :=\sum
_{j=0}^{N-1}\xi_{j}\left(  \omega\right)  \mathbf{1}_{[t_{j},t_{j+1})}\left(
t\right)  ,$ $0=t_{0}<\cdots<t_{N}=T$, $N\geq1$, $\xi_{j}\in L_{ip}%
(\Omega_{t_{j}})$ for $j=1,\ldots,N\}$;

\item $\mathcal{H}_{G}^{p}(0,T):=$completion of $\mathcal{H}_{G}^{0}(0,T)$
under the norm $\Vert\cdot\Vert_{\mathcal{H}_{G}^{p}}:=(\mathbb{\hat{E}}%
[(\int_{0}^{T}|\cdot|^{2}ds)^{p/2}])^{1/p}$ for $p\geq1$;

\item $\mathcal{H}_{G}^{p}(0,T;\mathbb{R}^{d}):=$ $\{\eta=(\eta^{i})_{i=1}%
^{d}$, $\eta^{i}\in\mathcal{H}_{G}^{p}(0,T)\}$ for $p\geq1$;

\item $\mathcal{S}_{G}^{0}(0,T):=\{h\left(  t,B_{t_{1}\wedge t},\ldots
,B_{t_{n}\wedge t}\right)  ,$ $t_{1},\ldots,t_{n}\in\lbrack0,T]$, $h\in
C_{b,Lip}(\mathbb{R}^{n+1})\}$;

\item $\mathcal{S}_{G}^{p}(0,T):=$completion of $\mathcal{S}_{G}^{0}(0,T)$
under the norm $\Vert\cdot\Vert_{\mathcal{S}_{G}^{p}}:=|\mathbb{\hat{E}}%
[\sup_{t\in\lbrack0,T]}|\cdot|^{p}]|^{1/p}$ for $p\geq1$.
\end{itemize}

\noindent Having introduced the processes defined on $[0,T]$, we can similarly
consider those on the infinite horizon $[0,\infty)$ and define the following spaces:

\begin{itemize}
\item $\mathcal{H}_{G}^{p}(0,\infty;\mathbb{R}^{d}):=\cap_{T\geq0}%
\mathcal{H}_{G}^{p}(0,T;\mathbb{R}^{d})$;

\item $\mathcal{S}_{G}^{p}(0,\infty):=\cap_{T\geq0}\mathcal{S}_{G}^{p}(0,T)$.
\end{itemize}

Following \cite{LP2011}, we can define $G$-stochastic integrals $\int\eta
d\langle B\rangle^{ij}$ and $\int\eta dB$ for each $\eta\in\mathcal{H}_{G}%
^{1}(0,T)$, and we have the following properties.

\begin{proposition}
\label{BDG}For processes $\eta,\theta\in\mathcal{H}_{G}^{\alpha}%
(0,T;\mathbb{R}^{d})$, $\alpha\geq1$, and bounded random variable $\xi
\in\mathbb{L}_{G}^{1}(\Omega_{t}),0\leq t\leq T$, we have%
\[
\mathbb{\hat{E}}\left[  \int_{0}^{T}\eta_{s}dB_{s}\right]  =0 ~\text{ and }%
\int_{0}^{T}\left(  \xi\eta_{s}+\theta_{s}\right)  dB_{s}=\xi\int_{0}^{T}%
\eta_{s}dB_{s}+\int_{0}^{T}\theta_{s}dB_{s}.
\]
and the Burkholder-Davis-Gundy type inequality holds: for $p>0$,%
\[
c_{G,p}\mathbb{\hat{E}}\left[  \left(  \int_{0}^{T}\left\vert \eta
_{s}\right\vert ^{2}ds\right)  ^{p/2}\right]  \leq\mathbb{\hat{E}}\left[
\sup_{t\in\left[  0,T\right]  }\left\vert \int_{0}^{t}\eta_{s}dB_{s}%
\right\vert ^{p}\right]  \leq C_{G,p}\mathbb{\hat{E}}\left[  \left(  \int
_{0}^{T}\left\vert \eta_{s}\right\vert ^{2}ds\right)  ^{p/2}\right]  ,
\]
where $0<c_{G,p}<C_{G,p}<\infty$ are constants depending on $G$ and $p$.
\end{proposition}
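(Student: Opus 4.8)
The plan is to reduce all three assertions to the dense class of step processes, where the $G$-stochastic integral is an explicit finite sum, and then to extend to arbitrary $\eta,\theta\in\mathcal{H}_{G}^{\alpha}(0,T;\mathbb{R}^{d})$ by density and continuity of the integral map; the Burkholder--Davis--Gundy (BDG) inequality is simultaneously the main obstacle and the estimate that powers the extension, so I would organize everything around it.

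First, for a step process $\eta=\sum_{j=0}^{N-1}\eta_{j}\mathbf{1}_{[t_{j},t_{j+1})}$ with $\eta_{j}\in L_{ip}(\Omega_{t_{j}};\mathbb{R}^{d})$, the integral is $\int_{0}^{T}\eta_{s}dB_{s}=\sum_{j}\eta_{j}(B_{t_{j+1}}-B_{t_{j}})$. Taking $\mathbb{\hat{E}}$ and peeling off subintervals from the last one by the tower property, each inner term $\mathbb{\hat{E}}_{t_{j}}[\eta_{j}(B_{t_{j+1}}-B_{t_{j}})]$ reduces, by independence of the increment from $\mathcal{F}_{t_{j}}$, to $\mathbb{\hat{E}}[a^{\top}(B_{t_{j+1}}-B_{t_{j}})]$ evaluated at $a=\eta_{j}$; since $B_{t_{j+1}}-B_{t_{j}}$ is $G$-normal with vanishing mean uncertainty, every linear functional of it has zero $G$-expectation, so this term is $0$ and the first identity follows. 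For the second identity, refine the partitions of $\eta$ and $\theta$ to a common one containing $t$; on each subinterval $[t_{j},t_{j+1})\subseteq[t,T]$ one has $\xi\eta_{j}\in L_{ip}(\Omega_{t_{j}})$, and the claim is then just the linearity of the elementary integral together with $\xi\eta_{j}(B_{t_{j+1}}-B_{t_{j}})=\xi\cdot\eta_{j}(B_{t_{j+1}}-B_{t_{j}})$.

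Second, I would prove the BDG inequality for step processes through the representation $\mathbb{\hat{E}}[\cdot]=\sup_{\mathbb{P}\in\mathcal{P}_{G}}\mathbb{E}^{\mathbb{P}}[\cdot]$. Put $M_{t}:=\int_{0}^{t}\eta_{s}dB_{s}$. Under each $\mathbb{P}\in\mathcal{P}_{G}$ the process $B$ is a continuous martingale with $d\langle B\rangle_{t}=\gamma_{t}\gamma_{t}^{\top}dt$, where, by the non-degeneracy of $G$ and the bounds on $d\langle B\rangle^{ij}$ recalled above, $\underline{\sigma}^{2}I_{d}\le\gamma_{t}\gamma_{t}^{\top}$ and $\operatorname{tr}[\gamma_{t}\gamma_{t}^{\top}]\le\bar{\sigma}_{\Sigma}^{2}$; hence $M$ is a continuous $\mathbb{P}$-local martingale with $\langle M\rangle_{T}=\int_{0}^{T}\eta_{s}^{\top}d\langle B\rangle_{s}\,\eta_{s}$ satisfying $\underline{\sigma}^{2}\int_{0}^{T}|\eta_{s}|^{2}ds\le\langle M\rangle_{T}\le\bar{\sigma}_{\Sigma}^{2}\int_{0}^{T}|\eta_{s}|^{2}ds$ q.s. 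Applying the classical BDG inequality under $\mathbb{P}$, with universal constants $c_{p},c_{p}'>0$ depending only on $p>0$, and inserting these two-sided bounds gives
\[
c_{p}\underline{\sigma}^{p}\,\mathbb{E}^{\mathbb{P}}\Big[\Big(\int_{0}^{T}|\eta_{s}|^{2}ds\Big)^{p/2}\Big]\le\mathbb{E}^{\mathbb{P}}\Big[\sup_{t\in[0,T]}|M_{t}|^{p}\Big]\le c_{p}'\bar{\sigma}_{\Sigma}^{p}\,\mathbb{E}^{\mathbb{P}}\Big[\Big(\int_{0}^{T}|\eta_{s}|^{2}ds\Big)^{p/2}\Big].
\]
For a step process $\sup_{t}|M_{t}|^{p}$ and $(\int_{0}^{T}|\eta_{s}|^{2}ds)^{p/2}$ belong to $\mathbb{L}_{G}^{1}(\Omega)$, so taking $\sup_{\mathbb{P}\in\mathcal{P}_{G}}$ on each side turns this into the asserted inequality with $c_{G,p}=c_{p}\underline{\sigma}^{p}$ and $C_{G,p}=c_{p}'\bar{\sigma}_{\Sigma}^{p}$.

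Finally, the BDG estimate on step processes shows that $\eta\mapsto M_{\cdot}$ is bounded from the (dense) step processes into $\mathcal{S}_{G}^{\alpha}(0,T)$, hence extends continuously to all of $\mathcal{H}_{G}^{\alpha}(0,T;\mathbb{R}^{d})$, and an inequality between continuous seminorms survives the limit. Choosing step processes $\eta^{n}\to\eta$, $\theta^{n}\to\theta$ in $\mathcal{H}_{G}^{\alpha}$ and, for the second identity, a uniformly bounded sequence $\xi^{n}\to\xi$ in every $\mathbb{L}_{G}^{p}$ (obtained by truncating an $L_{ip}(\Omega_{t})$-approximation of the bounded $\xi$), one lets $n\to\infty$ in the step-process identities, using boundedness of the $\xi^{n}$ to get $\xi^{n}\eta^{n}+\theta^{n}\to\xi\eta+\theta$ in $\mathcal{H}_{G}^{\alpha}$ and H\"{o}lder's inequality to kill the cross term $(\xi^{n}-\xi)\int_{0}^{T}\eta_{s}dB_{s}$. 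I expect the one genuinely delicate point to be the passage from the $\mathbb{P}$-wise estimates to the $G$-expectation: one must already know that $\sup_{t}|M_{t}|^{p}$ and $(\int_{0}^{T}|\eta_{s}|^{2}ds)^{p/2}$ lie in $\mathbb{L}_{G}^{1}(\Omega)$ before replacing $\sup_{\mathbb{P}}\mathbb{E}^{\mathbb{P}}$ by $\mathbb{\hat{E}}$, which is exactly why the approximation must precede the supremum over $\mathcal{P}_{G}$ and where the standing hypothesis $\alpha\ge1$ (with $p$ in the range for which both sides are finite) is used.
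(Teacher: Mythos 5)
Your proposal is essentially correct, but note first that the paper does not prove this proposition at all: it is recalled from the literature (``Following \cite{LP2011}, we can define $G$-stochastic integrals\dots and we have the following properties''), so there is no in-paper argument to compare against. What you have written is a faithful reconstruction of the standard proof. The first two identities are handled exactly as in \cite[Chapter 3]{PengBOOK}: for step processes the vanishing of $\mathbb{\hat{E}}[\int_0^T\eta_s dB_s]$ follows by peeling off increments with the tower property, the translation property $\mathbb{\hat{E}}_t[X+Y]=X+\mathbb{\hat{E}}_t[Y]$ for $\mathcal{F}_t$-measurable $X$, and the fact that a linear functional of a $G$-normal increment has zero $G$-expectation; the $\xi$-linearity is the elementary-integral computation; both pass to the limit by density. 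For the BDG inequality your route through the representation $\mathbb{\hat{E}}=\sup_{\mathbb{P}\in\mathcal{P}_G}\mathbb{E}^{\mathbb{P}}$ plus the classical BDG under each $\mathbb{P}$ (using $\underline{\sigma}^2\int_0^T|\eta_s|^2ds\leq\langle M\rangle_T\leq\bar{\sigma}_\Sigma^2\int_0^T|\eta_s|^2ds$) is the cleanest way to reach all $p>0$, and you correctly identify the genuinely delicate step: the inequality between $\sup_{\mathbb{P}}\mathbb{E}^{\mathbb{P}}$-quantities only becomes an inequality between $G$-expectations once one knows $\sup_{t}|M_t|^p$ and $(\int_0^T|\eta_s|^2ds)^{p/2}$ lie in $\mathbb{L}_G^1(\Omega)$, which for step processes with $L_{ip}$ coefficients follows from quasi-continuity and uniform integrability. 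Two small caveats you should make explicit if writing this out: (i) for $p>\alpha$ both sides of the BDG inequality may be infinite for a general $\eta\in\mathcal{H}_G^{\alpha}$, so the density-and-continuity extension only literally delivers the inequality for $p\leq\alpha$ (the general statement requires a localization or monotone approximation argument, an imprecision already present in the literature's formulation); (ii) the identity as printed integrates $\xi\eta_s$ from $0$ even though $\xi$ is only $\mathcal{F}_t$-measurable, so the integrand is non-adapted on $[0,t)$ --- your restriction to subintervals of $[t,T]$ silently corrects what is in effect a typo in the statement (the standard version has $\int_t^T$), and that correction is the right reading.
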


For more details related to $G$-Brownian motion and $G$-It\^{o} calculus, we
refer to \cite[Chapter 3]{PengBOOK}.

\subsection{$G$-BMO martingale generator and $G$-Girsanov's
formula\label{BMOandGirsanov}}

Here we provide the definition of the $G$-BMO martingale generator, introduced
by \cite{HLS2018}, as well as its properties, and the $G$-Girsanov's formula
is also presented in view of $G$-BMO martingale generators. We first recall
the notion of $G$-martingale.

\begin{definition}
A process $M$ is called a $G$-martingale if $M_{t}\in\mathbb{L}_{G}^{1}%
(\Omega_{t})$ for all $t\geq0$, and $\mathbb{\hat{E}}_{s}[M_{t}]=M_{s}$ for
all $0\leq s\leq t<\infty$. It is called a symmetric $G$-martingale if $-M$ is
also a $G$-martingale.
\end{definition}

\begin{definition}
[\cite{HLS2018}]\label{GBMO}A process $\lambda\in\mathcal{H}_{G}%
^{2}(0,T;\mathbb{R}^{d})$ is called a $G$-BMO martingale generator if
\[
\left\Vert \lambda\right\Vert _{BMO_{G}}^{2}:=\sup_{\mathbb{P\in}%
\mathcal{P}_{G}}\left\Vert \lambda\right\Vert _{BMO(\mathbb{P})}^{2}%
:=\sup_{\mathbb{P\in}\mathcal{P}_{G}}\sup_{\tau\in\mathcal{T}_{0}^{T}%
}\left\Vert E_{\tau}^{\mathbb{P}}\left[
%TCIMACRO{\tint _{\tau}^{T}}%
%BeginExpansion
{\textstyle\int_{\tau}^{T}}
%EndExpansion
\left\vert \lambda_{t}\right\vert ^{2}dt\right]  \right\Vert _{\mathbb{L}%
^{\infty}(\mathbb{P})}<\infty,
\]
where $\mathcal{T}_{0}^{T}$ denotes the set of all $[0,T]$-valued $\mathbb{F}%
$-stopping times, and $\Vert\xi\Vert_{\mathbb{L}^{\infty}(\mathbb{P})}%
:=\inf\{M\geq0,|\xi|\leq M$ $\mathbb{P}$-a.s.$\}$.
\end{definition}

For a $G$-BMO martingale generator $\lambda\in\mathcal{H}_{G}^{2}%
(0,T;\mathbb{R}^{d})$, it is obvious that $\left\Vert \lambda\right\Vert
_{BMO(\mathbb{P})}^{2}<\infty$ for all $\mathbb{P}\in\mathcal{P}_{G}$.
Recalling \cite[Theorem 3.1]{KBOOK} and \cite[Lemma 2.1]{PZ2013}, we can find
a $q>1$ such that%
\begin{equation}
\left\Vert \lambda\right\Vert _{BMO_{G}}\leq\Phi\left(  q\right) ~ \text{ with
}~\Phi\left(  q\right)  :=\left(  1+\tfrac{1}{q^{2}}\ln\tfrac{2q-1}{2\left(
q-1\right)  }\right)  ^{1/2}-1. \label{qBMO}%
\end{equation}
Define the stochastic exponential for $\lambda$ as
\[
\mathcal{E}_{t}\left(
%TCIMACRO{\tint }%
%BeginExpansion
{\textstyle\int}
%EndExpansion
\lambda dB\right)  :=\exp\left(  \sum_{i=1}^{d}\int_{0}^{t}\lambda_{s}%
^{i}dB_{s}^{i}-\frac{1}{2}\sum_{i,j=1}^{d}\int_{0}^{t}\lambda_{s}^{i}%
\lambda_{s}^{j}d\left\langle B\right\rangle _{s}^{ij}\right)  ,\text{ }%
t\in\left[  0,T\right]  ,
\]
and we have the following result.

\begin{lemma}
\label{GBMOM}Suppose $\lambda\in\mathcal{H}_{G}^{2}(0,T;\mathbb{R}^{d})$ is a
$G$-BMO martingale generator. Then we have%
\begin{equation}
\mathbb{\hat{E}}\left[  \mathcal{E}_{t}\left(
%TCIMACRO{\tint }%
%BeginExpansion
{\textstyle\int}
%EndExpansion
\lambda dB\right)  ^{q}\right]  <\infty~\text{ and }~\mathcal{E}_{t}\left(
%TCIMACRO{\tint }%
%BeginExpansion
{\textstyle\int}
%EndExpansion
\lambda dB\right)  \in\mathbb{L}_{G}^{1}(\Omega_{t})~\text{ for all }%
t\in\left[  0,T\right]  . \label{RH}%
\end{equation}
Moreover, $\mathcal{E}(%
%TCIMACRO{\tint }%
%BeginExpansion
{\textstyle\int}
%EndExpansion
\lambda dB)$ is a symmetric $G$-martingale.
\end{lemma}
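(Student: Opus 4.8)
The plan is to establish the two claims in \eqref{RH} by reducing the $G$-expectation statements to classical probabilistic statements under each $\mathbb{P}\in\mathcal{P}_G$, and then to promote the martingale property using the symmetry of the stochastic integral term. First I would fix an arbitrary $\mathbb{P}\in\mathcal{P}_G$ and observe that under $\mathbb{P}$ the quadratic variation density $d\langle B\rangle_t/dt$ lies in $\{\theta\theta^\top:\theta\in\Theta\}$, so the process $N_t:=\sum_i\int_0^t\lambda_s^i\,dB_s^i$ is a continuous $\mathbb{P}$-martingale with $\langle N\rangle_t=\sum_{i,j}\int_0^t\lambda_s^i\lambda_s^j\,d\langle B\rangle_s^{ij}\le\bar\sigma_\Sigma^2\int_0^t|\lambda_s|^2\,ds$, and $\|\lambda\|_{BMO(\mathbb{P})}<\infty$ means $N$ is a classical BMO$(\mathbb{P})$-martingale in the sense of Kazamaki. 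By the choice of $q>1$ in \eqref{qBMO} together with the John–Nirenberg-type reverse Hölder inequality for BMO martingales (Theorem 3.1 in \cite{KBOOK}, using precisely the threshold function $\Phi$), the stochastic exponential $\mathcal{E}(\int\lambda dB)$ satisfies the reverse Hölder inequality $R_q(\mathbb{P})$, so $E^{\mathbb{P}}[\mathcal{E}_t(\int\lambda dB)^q]\le C$ with $C$ depending only on $q$ and the BMO bound $\Phi(q)$, hence independent of $\mathbb{P}$. Taking the supremum over $\mathbb{P}\in\mathcal{P}_G$ gives $\mathbb{\hat E}[\mathcal{E}_t(\int\lambda dB)^q]<\infty$.

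Next I would upgrade integrability to membership in $\mathbb{L}_G^1(\Omega_t)$. Since $\mathcal{E}_t(\int\lambda dB)\ge0$ and is in $L^q$ uniformly over $\mathcal{P}_G$ with $q>1$, the family $\{\mathcal{E}_t(\int\lambda dB)\}$ is uniformly integrable under every $\mathbb{P}\in\mathcal{P}_G$ with a uniform modulus; to land inside $\mathbb{L}_G^1(\Omega_t)$ I would approximate $\lambda$ by elements $\lambda^n\in\mathcal{H}_G^0(0,t;\mathbb{R}^d)$ with $\lambda^n\to\lambda$ in $\mathcal{H}_G^2$ and, by truncating, also keeping $\|\lambda^n\|_{BMO_G}$ bounded by $\Phi(q)$; the corresponding simple-integrand exponentials $\mathcal{E}_t(\int\lambda^n dB)$ lie in $\mathbb{L}_G^1(\Omega_t)$ (they are built from bounded increments of $B$ and $\langle B\rangle$ composed with smooth truncated exponentials), and the uniform $L^q$-bound furnishes the uniform integrability needed to pass $\mathcal{E}_t(\int\lambda^n dB)\to\mathcal{E}_t(\int\lambda dB)$ in $\mathbb{L}_G^1$ along a subsequence. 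This gives the second assertion of \eqref{RH}.

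Finally, for the symmetric $G$-martingale property I would apply $G$-It\^o's formula to $\mathcal{E}_t(\int\lambda dB)$, which yields $d\mathcal{E}_t(\int\lambda dB)=\mathcal{E}_t(\int\lambda dB)\sum_i\lambda_t^i\,dB_t^i$, an integral with respect to $B$ and no $d\langle B\rangle$ or $dt$ drift. By Proposition \ref{BDG}, $G$-stochastic integrals against $dB$ of integrands in $\mathcal{H}_G^1$ have zero $G$-expectation and, more generally, zero conditional $G$-expectation (by the localization argument applied to $\xi\in\mathbb{L}_G^1(\Omega_s)$ bounded and the defining property $\mathbb{\hat E}_s[\,\cdot\,]$); the integrability established above, namely $\mathcal{E}_t(\int\lambda dB)\lambda\in\mathcal{H}_G^1(0,t;\mathbb{R}^d)$, which follows from the Cauchy–Schwarz/BMO argument together with the $L^q$-bound, legitimizes taking conditional $G$-expectations. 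Since the same computation applied to $-\mathcal{E}(\int\lambda dB)$ (equivalently, the integral representation has a sign-symmetric $dB$-term) shows $-\mathcal{E}(\int\lambda dB)$ is also a $G$-martingale, we conclude it is a symmetric $G$-martingale. The main obstacle I expect is the intermediate integrability check $\mathcal{E}(\int\lambda dB)\lambda\in\mathcal{H}_G^1$ needed to justify the stochastic-integral manipulations in a quasi-sure, $\mathcal{P}_G$-uniform way, and the care required in transferring the classical reverse Hölder inequality simultaneously across all $\mathbb{P}\in\mathcal{P}_G$ with a uniform constant; both hinge on the uniform BMO bound $\|\lambda\|_{BMO_G}\le\Phi(q)$ rather than a merely $\mathbb{P}$-by-$\mathbb{P}$ bound.
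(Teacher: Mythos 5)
Your argument is correct and is essentially the proof the paper intends: the paper's own ``proof'' is a one-line citation of the $1$-dimensional case in \cite{HLS2018}, whose argument is precisely your reduction to classical BMO theory under each $\mathbb{P}\in\mathcal{P}_{G}$, Kazamaki's reverse H\"{o}lder inequality with a constant depending only on $q$ and the bound $\Phi(q)$ (hence uniform over $\mathcal{P}_{G}$), uniform integrability from the resulting $L^{q}$-bound to obtain membership in $\mathbb{L}_{G}^{1}(\Omega_{t})$, and the driftless $G$-It\^{o} representation $\mathcal{E}_{t}=1+\int_{0}^{t}\mathcal{E}_{s}\lambda_{s}dB_{s}$ for the symmetric $G$-martingale property. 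The one step I would tighten is the $\mathbb{L}_{G}^{1}$-membership: simple approximants $\lambda^{n}\to\lambda$ in $\mathcal{H}_{G}^{2}$ do not automatically satisfy $\Vert\lambda^{n}\Vert_{BMO_{G}}\leq\Phi(q)$ after truncation, so the uniform-in-$n$ $L^{q}$-bound you invoke for their exponentials is not free; it is cleaner to note that the exponent $\sum_{i}\int_{0}^{t}\lambda_{s}^{i}dB_{s}^{i}-\frac{1}{2}\sum_{i,j}\int_{0}^{t}\lambda_{s}^{i}\lambda_{s}^{j}d\langle B\rangle_{s}^{ij}$ already lies in $\mathbb{L}_{G}^{1}(\Omega_{t})$ (hence is quasi-continuous), truncate it rather than the integrand, and use the uniform $L^{q}$-bound on $\mathcal{E}_{t}(\int\lambda dB)$ itself to pass to the limit.
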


\begin{proof}
The proof is similar to the $1$-dimensional case given in \cite{HLS2018}.
\end{proof}

\medskip Next we introduce the Girsanov's transformation associated with
$G$-BMO martingale generators following the idea in \cite{HLS2018,O2013}.
Based on above lemma, we can define a new $G$-expectation on the space
$L_{ip}(\Omega_{T})$\ with respect to the $G$-BMO martingale generator
$\lambda$ by%
\begin{equation}
\mathbb{\hat{E}}^{\lambda}\left[  X\right]  :=\mathbb{\hat{E}}\left[
\mathcal{E}_{T}\left(
%TCIMACRO{\tint }%
%BeginExpansion
{\textstyle\int}
%EndExpansion
\lambda dB\right)  X\right] ~ \text{ for }X\in L_{ip}(\Omega_{T})\text{,}
\label{GE}%
\end{equation}
and its corresponding conditional $G$-expectation by
\begin{equation}
\mathbb{\hat{E}}_{t}^{\lambda}\left[  X\right]  :=\mathbb{\hat{E}}_{t}\left[
\mathcal{E}_{T}^{t}\left(
%TCIMACRO{\tint }%
%BeginExpansion
{\textstyle\int}
%EndExpansion
\lambda dB\right)  X\right] ~ \text{ for }X\in L_{ip}(\Omega_{T})\text{ and
}t\in\left[  0,T\right]  \label{CE}%
\end{equation}
with%
\[
\mathcal{E}_{T}^{t}\left(
%TCIMACRO{\tint }%
%BeginExpansion
{\textstyle\int}
%EndExpansion
\lambda dB\right)  =\frac{\mathcal{E}_{T}\left(
%TCIMACRO{\tint }%
%BeginExpansion
{\textstyle\int}
%EndExpansion
\lambda dB\right)  }{\mathcal{E}_{t}\left(
%TCIMACRO{\tint }%
%BeginExpansion
{\textstyle\int}
%EndExpansion
\lambda dB\right)  }.
\]
Denote by $\mathbb{L}_{G}^{\lambda,1}(\Omega_{T})$ the completion of
$L_{ip}(\Omega_{T})$ under $\mathbb{\hat{E}}^{\lambda}[|\cdot|]$. Then
$\mathbb{\hat{E}}^{\lambda}$ and $\mathbb{\hat{E}}_{t}^{\lambda}$ can be
extended to the mappings on $\mathbb{L}_{G}^{\lambda,1}(\Omega_{T})$ which are
still denoted by $\mathbb{\hat{E}}^{\lambda}$ and $\mathbb{\hat{E}}%
_{t}^{\lambda}$. Furthermore, (\ref{GE}) and (\ref{CE}) still hold on
$\mathbb{L}_{G}^{\lambda,1}(\Omega_{T})$. From Lemma \ref{GBMOM}, we directly
have the following Girsanov's formula in $G$-framework by \cite[Theorem 5.3]{O2013}.

\begin{theorem}
\label{Girsanov}For a $G$-BMO martingale generator $\lambda\in\mathcal{H}%
_{G}^{2}(0,T;\mathbb{R}^{d})$, $B^{\lambda}=(B^{\lambda,i})_{i=1}^{d}$ with
each component defined by%
\begin{equation}
B_{t}^{\lambda,i}:=B_{t}^{i}-\sum_{j=1}^{d}\int_{0}^{t}\lambda_{s}%
^{j}d\left\langle B\right\rangle _{s}^{ij}\text{, }t\in\left[  0,T\right]  ,
\label{GirBM}%
\end{equation}
is a $d$-dimensional $G$-Brownian motion on the space $(\Omega_{T}%
,\mathbb{L}_{G}^{\lambda,1}(\Omega_{T}),\mathbb{\hat{E}}^{\lambda})$.
\end{theorem}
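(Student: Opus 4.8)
The plan is to reduce the $G$-Girsanov statement to an application of the abstract Girsanov theorem in sublinear expectation spaces established in \cite[Theorem 5.3]{O2013}, whose hypotheses are exactly the integrability and symmetric-$G$-martingale properties of the stochastic exponential $\mathcal{E}(\int\lambda dB)$ that we already have at hand. First I would record precisely what \cite[Theorem 5.3]{O2013} requires: that $\mathcal{E}(\int\lambda dB)$ be a symmetric $G$-martingale with a uniform $q$-th moment bound (the reverse H\"older / Muckenhoupt condition), so that the tilted functional $\mathbb{\hat{E}}^{\lambda}$ in \eqref{GE} is a well-defined sublinear expectation on $\mathbb{L}_{G}^{\lambda,1}(\Omega_{T})$ and, crucially, so that $G$-It\^o calculus is preserved under the change of expectation. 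Both ingredients are supplied by Lemma \ref{GBMOM}: the bound $\mathbb{\hat{E}}[\mathcal{E}_{t}(\int\lambda dB)^{q}]<\infty$ comes from \eqref{RH} together with the $G$-BMO bound \eqref{qBMO}, and the symmetric $G$-martingale property is the last assertion of that lemma.

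The core computation is then to verify that each coordinate process $B^{\lambda,i}$ defined by \eqref{GirBM} is a $G$-Brownian motion under $\mathbb{\hat{E}}^{\lambda}$, i.e. that it has stationary independent increments with the correct $G$-normal distribution and that its mutual variation matrix is unchanged, $\langle B^{\lambda}\rangle = \langle B\rangle$. The point is that $B^{\lambda}_{t} = B_{t} - \int_{0}^{t} d\langle B\rangle_{s}\,\lambda_{s}$ differs from $B$ only by a finite-variation term, so the quadratic variation is indeed unaffected; the substantive part is the distributional identity, which under the abstract Girsanov machinery follows from the exponential martingale identity $\mathcal{E}_{t}(\int\lambda dB)\,\exp(\text{linear functional of }B^{\lambda}) = \exp(\text{linear functional of }B)\times(\text{correction})$, i.e. from checking that for $\xi\in L_{ip}(\Omega_{T})$ the characteristic-function type quantities $\mathbb{\hat{E}}^{\lambda}_{t}[\varphi(B^{\lambda}_{r}-B^{\lambda}_{t})]$ solve the same $G$-heat equation as under $\mathbb{\hat{E}}$. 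Rather than redo this, I would cite \cite[Theorem 5.3]{O2013} directly, since its statement is formulated precisely for a drift of the form $\sum_{j}\int \lambda^{j}_{s}\,d\langle B\rangle^{ij}_{s}$ under the exact hypotheses we have verified; the whole argument is the observation that Lemma \ref{GBMOM} makes $\lambda$ an admissible integrand for that theorem.

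The main obstacle I anticipate is purely one of matching conventions: one must make sure that the normalization of the stochastic exponential used in Lemma \ref{GBMOM} (with the $d\langle B\rangle^{ij}$-weighted quadratic term) coincides with the one in \cite{O2013}, and that the domain $\mathbb{L}_{G}^{\lambda,1}(\Omega_{T})$ on which $\mathbb{\hat{E}}^{\lambda}$ acts is large enough to contain the test functionals $\varphi(B^{\lambda}_{t_{1}\wedge T},\dots,B^{\lambda}_{t_{n}\wedge T})$ needed to identify the law of $B^{\lambda}$. Because $\mathcal{E}_{T}(\int\lambda dB)\in\mathbb{L}_{G}^{1}(\Omega_{T})$ and has a $q$-th moment, H\"older's inequality shows $L_{ip}(\Omega_{T})\subset\mathbb{L}_{G}^{\lambda,1}(\Omega_{T})$ and the monomials of $B^{\lambda}$ are absorbed, so this is routine once stated carefully. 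Hence the proof is short: invoke Lemma \ref{GBMOM} to check the hypotheses of \cite[Theorem 5.3]{O2013}, and the conclusion that $B^{\lambda}$ is a $d$-dimensional $G$-Brownian motion on $(\Omega_{T},\mathbb{L}_{G}^{\lambda,1}(\Omega_{T}),\mathbb{\hat{E}}^{\lambda})$ follows immediately.
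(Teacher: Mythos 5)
Your proposal matches the paper exactly: the theorem is stated there as a direct consequence of Lemma \ref{GBMOM} (which supplies the $q$-th moment bound and the symmetric $G$-martingale property of $\mathcal{E}(\int\lambda dB)$) combined with \cite[Theorem 5.3]{O2013}, with no further argument given. Your additional remarks on convention-matching and on $L_{ip}(\Omega_{T})\subset\mathbb{L}_{G}^{\lambda,1}(\Omega_{T})$ are correct but go beyond what the paper records.
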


\begin{remark}
\label{ReBMO}We can check the following integrability results for $G$-BMO
martingale generators: for a $G$-BMO martingale generator $\lambda
\in\mathcal{H}_{G}^{2}(0,T;\mathbb{R}^{d})$ with $\left\Vert \lambda
\right\Vert _{BMO_{G}}\leq\Phi(q)$ for a $q>1$,

\begin{enumerate}
\item[(i)] $\left\Vert \lambda\right\Vert _{\mathcal{H}_{G}^{p}}<\infty$ for
any $p\geq1$;

\item[(ii)] if $X\in\mathbb{L}_{G}^{p}(\Omega_{T})$ for some $p>\frac{q}{q-1}%
$, then we have $X\in\mathbb{L}_{G}^{\lambda,1}(\Omega_{T})$;

\item[(iii)] for any $G$-BMO martingale generator $Z\in\mathcal{H}_{G}%
^{2}(0,T;\mathbb{R}^{d})$ and $p\geq1$,%
\begin{align*}
\mathbb{\hat{E}}^{\lambda}\left[  \left(  \int_{0}^{T}\left\vert
Z_{s}\right\vert ^{2}ds\right)  ^{p/2}\right]   &  =\mathbb{\hat{E}}\left[
\mathcal{E}_{T}\left(
%TCIMACRO{\tint }%
%BeginExpansion
{\textstyle\int}
%EndExpansion
\lambda dB\right)  \left(  \int_{0}^{T}\left\vert Z_{s}\right\vert
^{2}ds\right)  ^{p/2}\right] \\
&  \leq\mathbb{\hat{E}}\left[  \mathcal{E}_{T}\left(
%TCIMACRO{\tint }%
%BeginExpansion
{\textstyle\int}
%EndExpansion
\lambda dB\right)  ^{q}\right]  ^{1/q}\mathbb{\hat{E}}\left[  \left(  \int
_{0}^{T}\left\vert Z_{s}\right\vert ^{2}ds\right)  ^{pq/2\left(  q-1\right)
}\right]  ^{\left(  q-1\right)  /q}\\
&  <\infty,
\end{align*}
and, moreover,
\[
\left(  \int_{0}^{t}Z_{s}^{i}ds\right)  ^{p}\in\mathbb{L}_{G}^{1}(\Omega
_{t})\text{, }\left(  \int_{0}^{t}Z_{s}^{i}ds\right)  ^{p}\in\mathbb{L}%
_{G}^{\lambda,1}(\Omega_{t}),~\text{ }i=1,\ldots,d.
\]

\end{enumerate}
\end{remark}

\subsection{Quadratic $G$-BSDEs on finite horizon}

In order to prove the solvability of quadratic $G$-BSDEs on the infinite
horizon, we will make use of the results for finite horizon case. For reader's
convenience, we present the well-posedness and the estimates for quadratic
$G$-BSDEs on finite horizon obtained in \cite{HLS2018}.

For a given $T>0$, consider the following quadratic $G$-BSDE: for $0\leq t\leq
T$,
\begin{equation}
Y_{t}=\xi+\int_{t}^{T}f\left(  s,Y_{s},Z_{s}\right)  ds+\sum_{i,j=1}^{d}%
\int_{t}^{T}g^{ij}\left(  s,Y_{s},Z_{s}\right)  d\left\langle B\right\rangle
_{s}^{ij}-\int_{t}^{T}Z_{s}dB_{s}-\left(  K_{T}-K_{t}\right)  , \label{fQBSDE}%
\end{equation}
where $\xi\in\mathbb{L}_{G}^{\infty}(\Omega_{T})$ and the generators
$f,g^{ij}=g^{ji}:[0,T]\times\Omega\times\mathbb{R}\times\mathbb{R}%
^{d}\rightarrow\mathbb{R}$ are supposed to satisfy the following conditions:

\begin{condition}
\label{Hf}For $h=f,g^{ij}$, $i,j=1,\ldots,d$,

\begin{enumerate}
\item[(i)] $|h(t,\omega,0,0)|\leq M_{0}$ for all $(t,\omega)\in\lbrack
0,T]\times\Omega$,

\item[(ii)] $h(t,\omega,y,z)$ is uniformly continuous in $(t,\omega)$ and the
modulus of continuity $w^{h}$ is independent of $(y,z)$: for each $\left(
y,z\right)  \in\mathbb{R}\times\mathbb{R}^{d}$,
\[
|h\left(  t,\omega,y,z\right)  -h\left(  t^{\prime},\omega^{\prime
},y,z\right)  |\leq w^{h}\left(  \left\vert t-t^{\prime}\right\vert
+\left\Vert \omega-\omega^{\prime}\right\Vert _{\infty}\right)  \text{,}%
\]
where $\Vert\omega-\omega^{\prime}\Vert_{\infty}:=\max_{t\in\lbrack
0,T]}|\omega_{t}-\omega_{t}^{\prime}|$;

\item[(iii)] $h$ is uniformly Lipschitz continuous in $y$ and uniformly
locally Lipschitz continuous in $z$: there exist positive constants $L_{y}$
and $L_{z}$ such that, for each $(t,\omega)\in\lbrack0,T]\times\Omega$,%
\[
\left\vert h\left(  t,\omega,y,z\right)  -h\left(  t,\omega,y^{\prime
},z^{\prime}\right)  \right\vert \leq L_{y}\left\vert y-y^{\prime}\right\vert
+L_{z}\left(  1+\left\vert z\right\vert +\left\vert z^{\prime}\right\vert
\right)  \left\vert z-z^{\prime}\right\vert .
\]

\end{enumerate}
\end{condition}

According to \cite[Remark 2.15]{HLS2018}, we know that, for $h=f,g^{ij}$ and
each $p\geq2$, $h(\cdot,\cdot,y,z)\in\mathcal{H}_{G}^{p}(0,T)$ and
$h(\cdot,\cdot,Y,Z)\in\mathcal{H}_{G}^{p}(0,T)$ if $Y\in\mathcal{S}_{G}%
^{p}(0,T)$ and $Z\in\mathcal{H}_{G}^{2p}(0,T;\mathbb{R}^{d})$.

\begin{definition}
We call the triplet $(Y,Z,K)$ a solution to $G$-BSDE (\ref{fQBSDE}) if

\begin{enumerate}
\item[(i)] $(Y,Z,K)\in\mathfrak{S}_{G}^{2}(0,T)$, where $\mathfrak{S}_{G}%
^{2}(0,T):=\{(Y,Z,K):Y\in\mathcal{S}_{G}^{2}(0,T),Z\in\mathcal{H}_{G}%
^{2}(0,T;\mathbb{R}^{d})$ and $K$ is a nonincreasing $G$-martingale with
$K_{0}=0$ and $K_{T}\in\mathbb{L}_{G}^{2}(\Omega_{T})\}\,$;

\item[(ii)] it satisfies the equation (\ref{fQBSDE}) for all $t\in\lbrack0,T]$.
\end{enumerate}
\end{definition}

\begin{theorem}
[\cite{HLS2018}]\label{QGBSDE}Let Assumption \ref{Hf} hold. Then the
quadratic $G$-BSDE (\ref{fQBSDE}) on finite horizon $[0,T]$ admits a unique
solution $(Y,Z,K)\in\mathfrak{S}_{G}^{2}(0,T)$ such that $Y$ is bounded.
\end{theorem}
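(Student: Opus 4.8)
The plan is to reduce the quadratic $G$-BSDE (\ref{fQBSDE}) to a Lipschitz one by truncating the generators in the $z$-variable, which is legitimate precisely because we only seek a solution with $Y$ bounded and, as the a priori estimates will show, with $Z$ a $G$-BMO martingale generator. I would organize the argument into three parts: an a priori bound, existence by approximation, and uniqueness by $G$-Girsanov linearization. For the a priori bound, suppose $(Y,Z,K)\in\mathfrak{S}_{G}^{2}(0,T)$ solves (\ref{fQBSDE}). Applying $G$-It\^o's formula to the exponential transforms $e^{\pm\beta Y_{t}}$ with $\beta$ large (depending on $L_{y}$, $L_{z}$, $\bar\sigma_{\Sigma}^{2}$, $\underline\sigma$), the quadratic growth of $f,g^{ij}$ in $z$ from Assumption \ref{Hf}(i),(iii) is absorbed by the second-order term $\tfrac12\beta^{2}e^{\pm\beta Y}\sum_{i,j}Z^{i}Z^{j}d\langle B\rangle^{ij}$ together with the non-degeneracy inequality $\sum_{i,j}Z^{i}Z^{j}d\langle B\rangle^{ij}\geq\underline\sigma^{2}|Z|^{2}dt$; the terminal contribution is controlled by $\xi\in\mathbb{L}_{G}^{\infty}(\Omega_{T})$, and the contribution of $K$ is handled using that $K$ is a nonincreasing $G$-martingale (so its increment has a favourable sign after taking conditional $G$-expectations, while the reverse term is reabsorbed through the $G$-BSDE itself at the corresponding stopping time). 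This yields a constant $C$, depending only on the data, with $|Y_{t}|\leq C$ q.s. for all $t$. Re-running the same computation on stochastic intervals $[\tau,T]$ with $\tau\in\mathcal{T}_{0}^{T}$ and taking $\sup_{\mathbb{P}\in\mathcal{P}_{G}}$ of the conditional expectations produces a bound on $\sup_{\mathbb{P},\tau}\Vert E_{\tau}^{\mathbb{P}}[\int_{\tau}^{T}|Z_{s}|^{2}ds]\Vert_{\mathbb{L}^{\infty}(\mathbb{P})}$, i.e. $\Vert Z\Vert_{BMO_{G}}\leq C'$ depending only on the data; by (\ref{qBMO}) this fixes a single $q>1$, uniform over all such solutions, for which the Girsanov machinery of Section \ref{BMOandGirsanov} applies.

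For existence I would truncate: let $\rho_{n}:\mathbb{R}^{d}\to\mathbb{R}^{d}$ be smooth with $\rho_{n}(z)=z$ on $|z|\leq n$ and $|\rho_{n}|\leq n+1$, and set $h_{n}(t,\omega,y,z):=h(t,\omega,y,\rho_{n}(z))$ for $h=f,g^{ij}$. Each $f_{n},g^{ij}_{n}$ is uniformly Lipschitz in $(y,z)$ and still satisfies Assumption \ref{Hf}(i),(ii), so the Lipschitz $G$-BSDE theory of \cite{HJPS2014a} gives a unique $(Y^{n},Z^{n},K^{n})\in\mathfrak{S}_{G}^{2}(0,T)$. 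The a priori bound above applies uniformly in $n$ (the truncated generators obey the same quadratic estimate), giving $\Vert Y^{n}\Vert_{\infty}\leq C$ and a uniform bound $\Vert Z^{n}\Vert_{BMO_{G}}\leq C'$ with the same $q>1$. To pass to the limit I would show $(Y^{n})$ is Cauchy: writing the $G$-BSDE satisfied by $Y^{n}-Y^{m}$, the $z$-terms linearize with a coefficient $\lambda^{n,m}$ which, by the bounds on $Z^{n},Z^{m}$, is a $G$-BMO martingale generator with uniformly bounded norm; applying $G$-Girsanov (Theorem \ref{Girsanov}), under $\mathbb{\hat{E}}^{\lambda^{n,m}}$ the difference satisfies a linear $G$-BSDE with Lipschitz driver in $y$ and an inhomogeneous term dominated by quantities of the form $\sup_{|z|\leq C}|h-h_{n}|$, which vanish on the event $\{|Z^{k}|\leq n\wedge m\}$ whose complement has small capacity for $n\wedge m$ large. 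Combined with the integrability transfer in Remark \ref{ReBMO} and the Burkholder--Davis--Gundy inequality of Proposition \ref{BDG}, this gives convergence of $Y^{n}$ in $\mathcal{S}_{G}^{2}(0,T)$, of $Z^{n}$ in $\mathcal{H}_{G}^{2}(0,T;\mathbb{R}^{d})$, and of $K^{n}$ in $\mathcal{S}_{G}^{2}(0,T)$ to a triple $(Y,Z,K)$ solving (\ref{fQBSDE}) with $Y$ bounded; stability of nonincreasing $G$-martingales under this convergence keeps $K$ in the required class.

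Uniqueness is the same Girsanov linearization taken directly: given two solutions $(Y,Z,K)$ and $(Y',Z',K')$ with $Y,Y'$ bounded, the a priori bound shows $Z,Z'$ are $G$-BMO martingale generators, so in the equation for $\delta Y:=Y-Y'$ the $z$-terms become linear with a $G$-BMO coefficient $\lambda$. Passing to $\mathbb{\hat{E}}^{\lambda}$ via Theorem \ref{Girsanov} removes the $\delta Z$ martingale part, and a $G$-Gronwall estimate together with $\delta Y_{T}=0$ forces $\delta Y\equiv0$; comparing the two equations then gives $\delta Z\equiv0$ and $\delta K\equiv0$.

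The main obstacle I anticipate is the uniform control of the approximating $Z^{n}$ in the $G$-BMO norm and the careful bookkeeping of the two expectations $\mathbb{\hat{E}}$ and $\mathbb{\hat{E}}^{\lambda}$. The exponential and stopping-time estimates underlying the a priori bound must be carried out simultaneously over all $\mathbb{P}\in\mathcal{P}_{G}$, since one cannot localize along a single measure; the $d\langle B\rangle^{ij}$ terms must be absorbed using $d|\langle B\rangle_{t}^{ij}|\leq\bar\sigma_{ij}^{2}\,dt$ and the non-degeneracy lower bound at the same time that the quadratic $|Z|^{2}$ feedback from $f$ and $g^{ij}$ and the sign of the nonincreasing $K$ are balanced by the choice of $\beta$; and throughout one must respect the distinction between boundedness q.s. and membership in $\mathbb{L}_{G}^{\infty}(\Omega_{T})$ so that the Girsanov density $\mathcal{E}_{T}(\int\lambda dB)$ retains the integrability (\ref{RH}) needed to justify each conditional-expectation manipulation.
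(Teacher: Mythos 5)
This theorem is not proved in the paper at all: it is imported from \cite{HLS2018}, and the introduction records that the existence there is obtained \emph{via a discretization approach} (discrete-time approximations of the equation), while uniqueness rests on a priori estimates, $G$-BMO martingale generators and the $G$-Girsanov transform. Your a priori bound (exponential transform, $\Vert Z\Vert_{BMO_{G}}\leq\hat C$, cf.\ Remark \ref{1101}) and your uniqueness argument follow the cited scheme faithfully. Your existence argument, however, replaces the discretization by a truncation of the generators in $z$ followed by a Cauchy estimate for the Lipschitz approximations of \cite{HJPS2014a}, and this is where there is a genuine gap.

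The gap is the claim that the inhomogeneous term in the linearized equation for $Y^{n}-Y^{m}$ is negligible because the complement of $\{|Z^{k}|\leq n\wedge m\}$ has small capacity. After linearization that residual is bounded by $C|Z^{m}_{s}|^{2}\mathbf{1}_{\{|Z^{m}_{s}|>n\wedge m\}}$ (exactly the estimate (\ref{Q3003})), so closing the Cauchy argument requires
\[
\sup_{m}\ \mathbb{\hat{E}}^{\lambda^{n,m}}\Bigl[\int_{0}^{T}|Z^{m}_{s}|^{2}\mathbf{1}_{\{|Z^{m}_{s}|>n\}}\,ds\Bigr]\longrightarrow0\quad\text{as }n\rightarrow\infty,
\]
uniformly over the whole sequence and under the shifted expectations. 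The uniform bound $\Vert Z^{m}\Vert_{BMO_{G}}\leq C'$ gives uniform $L^{p}$ control of $\int_{0}^{T}|Z^{m}_{s}|^{2}ds$ for every $p$ (Remark \ref{ReBMO}), but it does not give uniform integrability of the family $\{|Z^{m}|^{2}\}_{m}$ in this sense; \cite[Theorem 4.7]{HWZ2016}, which the present paper invokes in Lemma \ref{linearization}, yields $\mathbb{\hat{E}}[\int_{0}^{T}|Z_{s}|^{2}\mathbf{1}_{\{|Z_{s}|>n\}}ds]\rightarrow0$ only for a single fixed $Z$, not uniformly along a sequence. This is the standard obstruction to proving quadratic existence by truncation plus a naive Cauchy estimate even under a linear expectation, where one escapes via Kobylanski's monotone stability or the Briand--Hu $\theta$-difference device; neither transfers readily to the $G$-framework (monotone convergence of $G$-expectations and the compactness arguments both fail), which is precisely why \cite{HLS2018} constructs the solution through discrete-time approximations instead. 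Unless you supply a uniform tail estimate for $\{|Z^{n}|^{2}\}$ or an alternative stability mechanism, the passage to the limit does not close.
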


\begin{remark}
\label{1101}If $(Y,Z,K)\in\mathfrak{S}_{G}^{2}(0,T)$ is the solution to
$G$-BSDE (\ref{fQBSDE}) with $Y$ being bounded, then there exist positive
constants $\hat{C}$ and $\tilde{C}_{p}$ such that%
\begin{equation}
\left\Vert Z\right\Vert _{BMO_{G}}\leq\hat{C} \label{f1}%
\end{equation}
and%
\begin{equation}
\mathbb{\hat{E}}\left[  \left\vert K_{t}\right\vert ^{p}\right]  \leq\tilde
{C}_{p}~\text{ for }0\leq t\leq T\text{ and }p\geq1. \label{f2}%
\end{equation}

\end{remark}

\section{Explicit solutions of linear $G$-BSDEs with unbounded
coefficients\label{Explicit solution}}

This section is devoted to the explicit solutions of linear $G$-BSDEs on
finite horizon with unbounded coefficients of $Z$-components. In fact, linear
equations play an important role of giving the estimates of the difference of
two BSDEs in the study of infinite horizon BSDEs (see, e.g.,
\cite{BH1998,HSW2022,HW2018}).

For a given $T>0$, we consider the following linear $G$-BSDE: for $0\leq t\leq
T$,%
\begin{equation}
Y_{t}=\xi+\int_{t}^{T}f_{s}ds+\sum_{i,j=1}^{d}\int_{t}^{T}g_{s}^{ij}%
d\left\langle B\right\rangle _{s}^{ij}-\int_{t}^{T}Z_{s}dB_{s}-\left(
K_{T}-K_{t}\right)  , \label{LE}%
\end{equation}
where%
\begin{equation}
\xi\in\mathbb{L}_{G}^{\infty}(\Omega_{T})\text{, \ }f_{s}=a_{s}Y_{s}%
+b_{s}Z_{s}+m_{s}\text{, \ }g_{s}^{ij}=c_{s}^{ij}Y_{s}+d_{s}^{ij}Z_{s}%
+n_{s}^{ij}, \label{gen}%
\end{equation}
and
\begin{equation}
a,c^{ij}=c^{ji},m,n^{ij}=n^{ji}\in\mathcal{H}_{G}^{2}(0,T)\text{, \ }%
b,d^{ij}=d^{ji}\in\mathcal{H}_{G}^{2}(0,T;\mathbb{R}^{d}). \label{coe}%
\end{equation}
In the rest of this section, we always assume the following conditions on the coefficients:

\begin{enumerate}
\item[(i)] $a$ and $c^{ij}$ are uniformly bounded;

\item[(ii)] $b$ and $d^{ij}$ are $d$-dimensional $G$-BMO martingale generators
and there exists a $q>1$ such that $\left\Vert \Lambda\right\Vert _{BMO_{G}%
}\leq\Phi(q)$ for $\Lambda$ defined by (\ref{lamda}) below;

\item[(iii)] $m,n^{ij}\in\mathcal{H}_{G}^{p}(0,T)$ for some $p>\frac{q}{q-1}.$
\end{enumerate}

We will gradually derive the representation for the explicit solutions to
linear $G$-BSDE (\ref{LE}) from the cases $d=1$ and $d=2$ to the general case
in order to illustrate the idea of construction.

\subsection{Case $d=1$}

We start with the case $d=1$ to reveal the role of $G$-Girsanov's formula in
solving linear $G$-BSDE as well as the reason why we need to extend the
dimension of the $G$-expectation space.

Consider a $1$-dimensional $G$-expectation space with $G$ given by%
\[
G\left(  a\right)  :=\frac{1}{2}\left(  a^{+}-\sigma^{2}a^{-}\right)
,\text{\thinspace}\sigma\in(0,1],
\]
and the linear $G$-BSDE (\ref{LE}) is of the form
\begin{equation}
Y_{t}=\xi+\int_{t}^{T}\left(  a_{s}Y_{s}+b_{s}Z_{s}+m_{s}\right)  ds+\int
_{t}^{T}\left(  c_{s}Y_{s}+d_{s}Z_{s}+n_{s}\right)  d\left\langle
B\right\rangle _{s}-\int_{t}^{T}Z_{s}dB_{s}-\left(  K_{T}-K_{t}\right)  .
\label{L1}%
\end{equation}
Let $X$ be the solution of the following linear $G$-SDE:%
\[
dX_{t}=a_{t}X_{t}dt+c_{t}X_{t}d\left\langle B\right\rangle _{t}\text{, }%
~X_{0}=1\text{,}%
\]
and it can be represented as
\begin{equation}
X_{t}=\exp\left(  \int_{0}^{t}a_{s}ds+\int_{0}^{t}c_{s}d\left\langle
B\right\rangle _{t}\right)  . \label{X1}%
\end{equation}
Applying $G$-It\^{o}'s formula to $XY$ implies that%
\begin{equation}
X_{t}Y_{t}+\int_{t}^{T}X_{s}dK_{s}+\int_{t}^{T}X_{s}Z_{s}\left(  dB_{s}%
-b_{s}ds-d_{s}d\langle B\rangle_{s}\right)  =X_{T}\xi+\int_{t}^{T}m_{s}%
X_{s}ds+\int_{t}^{T}n_{s}X_{s}d\langle B\rangle_{s}. \label{Q1801}%
\end{equation}
In order to get the representation of $Y$, we want to eliminate the last two
terms of the left hand side via taking conditional expectation. However,
according to Theorem \ref{Girsanov}, the original space
$(\Omega_{T},\mathbb{L}_{G}^{1}(\Omega_{T}),\mathbb{\hat{E}})$ is not big
enough to transform $B_{t}-\int_{0}^{t}b_{s}ds-\int_{0}^{t}d_{s}d\langle
B\rangle_{s}$ into a $G$-Brownian motion due to the extra $ds$ term. Thus, the
main task is to define an auxiliary extended $\tilde{G}$-expectation space on
which $B_{t}-\int_{0}^{t}b_{s}ds-\int_{0}^{t}d_{s}d\langle B\rangle_{s}$ is a
$\tilde{G}$-Brownian motion and $\int_{0}^{t}X_{s}dK_{s}$ is a nonincreasing
$\tilde{G}$-martingale.

Similarly to \cite{HJPS2014b}, define%
\begin{equation}
\tilde{G}\left(  A\right)  :=\frac{1}{2}\sup_{\tilde{\theta}\in\tilde{\Theta}%
}\operatorname*{tr}[\tilde{\theta}\tilde{\theta}^{\top}A]~\text{ for }%
A\in\mathbb{S}\left(  2\right)  , \label{G2}%
\end{equation}
with%
\[
\tilde{\Theta}:=\left\{  \tilde{\theta}=%
\begin{pmatrix}
\theta & 0\\
\theta^{-1} & 1
\end{pmatrix}
,\theta\in\left[  \sigma,1\right]  \right\}  ,
\]
and, in turn, we can construct a $\tilde{G}$-expectation space $(\tilde
{\Omega}_{T}=C_{0}([0,T];\mathbb{R}^{2}),\mathbb{L}_{\tilde{G}}^{1}%
(\tilde{\Omega}_{T}),\mathbb{\hat{E}}^{\tilde{G}})$ with $2$-dimensional
$\tilde{G}$-Brownian motion denoted by $\tilde{B}=(B,\dot{B})^{\top}$. For
$\tilde{\theta}\in\tilde{\Theta}$ with its corresponding $\theta\in
\lbrack\sigma,1]$, we have
\begin{equation}
\tilde{\theta}\tilde{\theta}^{\top}=%
\begin{pmatrix}
\theta^{2} & 1\\
1 & \left(  \theta^{-1}\right)  ^{2}+1
\end{pmatrix}
\text{.} \label{M1}%
\end{equation}
It then follows by \cite[Corollary 3.5.8]{PengBOOK} that $\langle B,\dot
{B}\rangle_{t}=t$. Moreover, we have $\mathbb{\hat{E}}^{\tilde{G}%
}=\mathbb{\hat{E}}$ on $\mathbb{L}_{G}^{1}(\Omega_{T})$.

Let $\Lambda:=(d,b)^{\top}\in\mathcal{H}_{G}^{2}(0,T;\mathbb{R}^{2})$, which
is a $G$-BMO martingale generator and also a $\tilde{G}$-BMO martingale
generator. Then, we can define a $\tilde{G}$-expectation space $(\tilde
{\Omega}_{T},\mathbb{L}_{\tilde{G}}^{\Lambda,1}(\tilde{\Omega}_{T}%
),\mathbb{\hat{E}}^{\tilde{G},\Lambda})$ according to the procedure in Section
\ref{BMOandGirsanov} with%
\[
\mathbb{\hat{E}}^{\tilde{G},\Lambda}\left[  X\right]  =\mathbb{\hat{E}%
}^{\tilde{G}}\left[  \mathcal{E}_{T}\left(
%TCIMACRO{\tint }%
%BeginExpansion
{\textstyle\int}
%EndExpansion
\Lambda d\tilde{B}\right)  X\right] ~ \text{ and }~\mathbb{\hat{E}}_{t}%
^{\tilde{G},\Lambda}\left[  X\right]  =\mathbb{\hat{E}}_{t}^{\tilde{G}}\left[
\mathcal{E}_{T}^{t}\left(
%TCIMACRO{\tint }%
%BeginExpansion
{\textstyle\int}
%EndExpansion
\Lambda d\tilde{B}\right)  X\right]  ~\text{ for }X\in\mathbb{L}_{\tilde{G}%
}^{\Lambda,1}(\tilde{\Omega}_{T})\text{.}%
\]
By Theorem \ref{Girsanov}, we know that
\begin{equation}
\tilde{B}_{t}^{\Lambda,1}:=B_{t}-\int_{0}^{t}b_{s}ds-\int_{0}^{t}d_{s}d\langle
B\rangle_{s},\text{ }t\in\lbrack0,T], \label{GBM1}%
\end{equation}
is a $\tilde{G}$-Brownian motion under $\mathbb{\hat{E}}^{\tilde{G},\Lambda}$.
Now we are in a position to give the representation result for linear BSDE
driven by $1$-dimensional $G$-Brownian motion.

\begin{proposition}
\label{1dimProp}Suppose $(Y,Z,K)\in\mathfrak{S}_{G}^{2}(0,T)$ is the solution
of linear $G$-BSDE (\ref{LE}) satisfying the following conditions:

\begin{enumerate}
\item[(i)] $Z$ is a $1$-dimensional $G$-BMO martingale generator,

\item[(ii)] $K_{T}\in\mathbb{L}_{G}^{p}(\Omega_{T})$ for some $p>\frac{q}%
{q-1}$.
\end{enumerate}

\noindent\noindent\noindent Then we have%
\[
Y_{t}=\left(  X_{t}\right)  ^{-1}\mathbb{\hat{E}}_{t}^{\tilde{G},\Lambda
}\left[  X_{T}\xi+\int_{t}^{T}m_{s}X_{s}ds+\int_{t}^{T}n_{s}X_{s}d\left\langle
B\right\rangle _{s}\right]
\]
in the $2$-dimensional $\tilde{G}$-expectation space $(\tilde{\Omega}%
_{T},\mathbb{L}_{\tilde{G}}^{\Lambda,1}(\tilde{\Omega}_{T}),\mathbb{\hat{E}%
}^{\tilde{G},\Lambda})$ with $\tilde{G}$ in (\ref{G2}), $\Lambda:=(d,b)^{\top
}$ and $X$ given by (\ref{X1}).
\end{proposition}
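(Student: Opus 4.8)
The plan is to carry identity (\ref{Q1801}) over to the extended $\tilde{G}$-expectation space, to recognize the ``unknown'' term $\int_{t}^{T}X_{s}Z_{s}(dB_{s}-b_{s}\,ds-d_{s}\,d\langle B\rangle_{s})$ as a stochastic integral against $\tilde{B}^{\Lambda,1}$, and then to apply the conditional operator $\mathbb{\hat{E}}_{t}^{\tilde{G},\Lambda}$ so that the two ``martingale'' terms disappear. Since $\mathbb{\hat{E}}^{\tilde{G}}$ coincides with $\mathbb{\hat{E}}$ on $\mathbb{L}_{G}^{1}(\Omega_{T})$, and likewise for the conditional operators, identity (\ref{Q1801}) continues to hold q.s.\ on $(\tilde{\Omega}_{T},\mathbb{L}_{\tilde{G}}^{1}(\tilde{\Omega}_{T}),\mathbb{\hat{E}}^{\tilde{G}})$. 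Using $\langle B,\dot{B}\rangle_{t}=t$ one rewrites $\int_{t}^{T}b_{s}X_{s}Z_{s}\,ds=\int_{t}^{T}b_{s}X_{s}Z_{s}\,d\langle B,\dot{B}\rangle_{s}$, so that, with $\Lambda=(d,b)^{\top}$ and $\tilde{B}=(B,\dot{B})^{\top}$, the defining relation (\ref{GBM1}) of $\tilde{B}^{\Lambda,1}$ (and Theorem~\ref{Girsanov}) yields $\int_{t}^{T}X_{s}Z_{s}(dB_{s}-b_{s}\,ds-d_{s}\,d\langle B\rangle_{s})=\int_{t}^{T}X_{s}Z_{s}\,d\tilde{B}_{s}^{\Lambda,1}$, and (\ref{Q1801}) becomes
\[
X_{t}Y_{t}+\int_{t}^{T}X_{s}\,dK_{s}+\int_{t}^{T}X_{s}Z_{s}\,d\tilde{B}_{s}^{\Lambda,1}=X_{T}\xi+\int_{t}^{T}m_{s}X_{s}\,ds+\int_{t}^{T}n_{s}X_{s}\,d\langle B\rangle_{s}.
\]

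Before applying $\mathbb{\hat{E}}_{t}^{\tilde{G},\Lambda}$ I would verify that every term lies in $\mathbb{L}_{\tilde{G}}^{\Lambda,1}(\tilde{\Omega}_{T})$. Since $a$ and $c$ are uniformly bounded and $\langle B\rangle$ has bounded density, the process $X$ in (\ref{X1}) and its reciprocal are bounded on $[0,T]$; hence $X_{T}\xi$ and $X_{t}Y_{t}$ are bounded, while $X_{s}\,dK_{s}$, $m_{s}X_{s}$, $n_{s}X_{s}$ inherit the $\mathbb{L}_{G}^{p}$- and $\mathcal{H}_{G}^{p}$-integrability with $p>q/(q-1)$ from the standing assumption on $m,n$ and the hypothesis on $K_{T}$. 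Because $\Lambda$ is a $\tilde{G}$-BMO martingale generator with $\|\Lambda\|_{BMO_{G}}\le\Phi(q)$, the $\tilde{G}$-analogue of Remark~\ref{ReBMO}(ii) then places $X_{T}\xi$, $X_{t}Y_{t}$, $\int_{t}^{T}m_{s}X_{s}\,ds$, $\int_{t}^{T}n_{s}X_{s}\,d\langle B\rangle_{s}$ and $\int_{t}^{T}X_{s}\,dK_{s}$ in $\mathbb{L}_{\tilde{G}}^{\Lambda,1}(\tilde{\Omega}_{T})$; and since $XZ$ is a $\tilde{G}$-BMO martingale generator (a bounded process times the $\tilde{G}$-BMO generator $Z$), the $\tilde{G}$-Burkholder--Davis--Gundy inequality of Proposition~\ref{BDG} applied in $(\tilde{\Omega}_{T},\mathbb{L}_{\tilde{G}}^{\Lambda,1}(\tilde{\Omega}_{T}),\mathbb{\hat{E}}^{\tilde{G},\Lambda})$ together with Remark~\ref{ReBMO}(iii) gives $\int_{\cdot}^{T}X_{s}Z_{s}\,d\tilde{B}_{s}^{\Lambda,1}\in\mathbb{L}_{\tilde{G}}^{\Lambda,1}(\tilde{\Omega}_{T})$.

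Two martingale facts then finish the argument. First, by Theorem~\ref{Girsanov} $\tilde{B}^{\Lambda,1}$ is a $\tilde{G}$-Brownian motion under $\mathbb{\hat{E}}^{\tilde{G},\Lambda}$, so $\int_{0}^{\cdot}X_{s}Z_{s}\,d\tilde{B}_{s}^{\Lambda,1}$ is a symmetric $\tilde{G}$-martingale there; in particular $\mathbb{\hat{E}}_{t}^{\tilde{G},\Lambda}[\int_{t}^{T}X_{s}Z_{s}\,d\tilde{B}_{s}^{\Lambda,1}]=0$. Second — and this is the delicate step — $\int_{0}^{\cdot}X_{s}\,dK_{s}$ is still a nonincreasing $\tilde{G}$-martingale under $\mathbb{\hat{E}}^{\tilde{G},\Lambda}$: it is nonincreasing because $X>0$ and $K$ is nonincreasing, and it remains a $\tilde{G}$-martingale because the $\tilde{G}$-martingale property of a nonincreasing $G$-martingale survives both multiplication by the strictly positive finite-variation process $X$ and the $G$-BMO Girsanov change of Theorem~\ref{Girsanov}, so that $\mathbb{\hat{E}}_{t}^{\tilde{G},\Lambda}[\int_{t}^{T}X_{s}\,dK_{s}]$ can be extracted exactly rather than merely estimated. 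Establishing this preservation property — in essence, that Girsanov with respect to a $G$-BMO martingale generator maps nonincreasing $G$-martingales to nonincreasing $G$-martingales — is the main obstacle of the proof.

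Granting these facts, set $L_{t}:=\int_{0}^{t}X_{s}Z_{s}\,d\tilde{B}_{s}^{\Lambda,1}+\int_{0}^{t}X_{s}\,dK_{s}$, which is a $\tilde{G}$-martingale under $\mathbb{\hat{E}}^{\tilde{G},\Lambda}$ with $L_{0}=0$. The displayed identity reads $X_{T}\xi+\int_{t}^{T}m_{s}X_{s}\,ds+\int_{t}^{T}n_{s}X_{s}\,d\langle B\rangle_{s}=X_{t}Y_{t}+(L_{T}-L_{t})$, and since $X_{t}Y_{t}-L_{t}\in\mathbb{L}_{\tilde{G}}^{\Lambda,1}(\tilde{\Omega}_{t})$ is $\mathcal{F}_{t}$-measurable while $L_{T}\in\mathbb{L}_{\tilde{G}}^{\Lambda,1}(\tilde{\Omega}_{T})$, applying $\mathbb{\hat{E}}_{t}^{\tilde{G},\Lambda}$ and using $\mathbb{\hat{E}}_{t}^{\tilde{G},\Lambda}[L_{T}]=L_{t}$ gives $\mathbb{\hat{E}}_{t}^{\tilde{G},\Lambda}[X_{T}\xi+\int_{t}^{T}m_{s}X_{s}\,ds+\int_{t}^{T}n_{s}X_{s}\,d\langle B\rangle_{s}]=X_{t}Y_{t}$. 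Dividing by $X_{t}>0$ gives the asserted representation of $Y_{t}$.
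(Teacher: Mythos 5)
Your proposal is correct and follows essentially the same route as the paper: rewrite (\ref{Q1801}) in the extended space, use the $G$-BMO/Girsanov integrability argument to kill the stochastic integral against $\tilde{B}^{\Lambda,1}$ under $\mathbb{\hat{E}}_{t}^{\tilde{G},\Lambda}$, and use that $\int_{0}^{\cdot}X_{s}\,dK_{s}$ remains a nonincreasing $\tilde{G}$-martingale under the transformed expectation. The one fact you flag as the ``main obstacle'' and then grant -- preservation of the nonincreasing $G$-martingale property under multiplication by the bounded positive process $X$ and under the $G$-BMO Girsanov change -- is precisely what the paper disposes of by citing Lemma 3.4 of \cite{HJPS2014a} and Lemma 3.4 of \cite{HLS2018}, so it is an available result rather than something needing a new argument here.
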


\begin{proof}
We have shown that $\tilde{B}^{\Lambda,1}$ in (\ref{GBM1}) is a $\tilde{G}%
$-Brownian motion under $\mathbb{\hat{E}}^{\tilde{G},\Lambda}$. Noting that
$Z$ is a $\tilde{G}$-BMO martingale generator and $X$ is uniformly bounded, we
can check by Remark \ref{ReBMO} that $\mathbb{\hat{E}}^{\tilde{G},\Lambda
}[(\int_{t}^{T}|X_{s}Z_{s}|^{2}ds)^{p/2}]<\infty$ for any $p\geq1$, and hence
$\mathbb{\hat{E}}_{t}^{\tilde{G},\Lambda}[\int_{t}^{T}X_{s}Z_{s}d\tilde{B}%
_{s}^{\Lambda,1}]=0$. On the other hand, since $\mathbb{\hat{E}}^{\tilde{G}}$
coincides with $\mathbb{\hat{E}}$ on $\mathbb{L}_{G}^{1}(\Omega_{T})$, $K$ is
a nonincreasing $\tilde{G}$-martingale under $\mathbb{\hat{E}}^{\tilde{G}}$.
Applying \cite[Lemma 3.4]{HLS2018} and \cite[Lemma 3.4]{HJPS2014a}, we obtain
that $\int_{0}^{t}X_{s}dK_{s}$ is a nonincreasing $\tilde{G}$-martingale under
$\mathbb{\hat{E}}^{\tilde{G},\Lambda}$. Finally, taking $\mathbb{\hat{E}}%
_{t}^{\tilde{G},\Lambda}$ on both sides of (\ref{Q1801}) yields that%
\[
X_{t}Y_{t}=\mathbb{\hat{E}}_{t}^{\tilde{G},\Lambda}\left[  X_{T}Y_{T}+\int
_{t}^{T}m_{s}X_{s}ds+\int_{t}^{T}n_{s}X_{s}d\left\langle B\right\rangle
_{s}\right]  ,
\]
which gives the desired result.
\end{proof}

\subsection{Case $d=2$}

Based on the previous discussion, when the dimension of the space is greater
than $1$, we still need to extend the space to derive the explicit solution of
linear $G$-BSDE (\ref{LE}). However, the construction of the extended space is
more complicated compared with the case $d=1$. Here we give the detailed
procedure for case $d=2$ for reader's convenience.

Consider a $2$-dimensional $G$-expectation space with $G:\mathbb{S}%
(2)\rightarrow\mathbb{R}$ represented by $\Theta\subset\mathbb{R}^{2\times2}$
as in (\ref{G}). We write the linear $G$-BSDE (\ref{LE}) in detail:
\begin{align}
Y_{t}=  &  \xi+\int_{t}^{T}\left(  a_{s}Y_{s}+b_{s}^{1}Z_{s}^{1}+b_{s}%
^{2}Z_{s}^{2}+m_{s}\right)  ds+\int_{t}^{T}\left(  c_{s}^{11}Y_{s}%
+d_{s}^{11,1}Z_{s}^{1}+d_{s}^{11,2}Z_{s}^{2}+n_{s}^{11}\right)  d\left\langle
B\right\rangle _{s}^{11}\label{2dimLG}\\
&  +2\int_{t}^{T}\left(  c_{s}^{12}Y_{s}+d_{s}^{12,1}Z_{s}^{1}+d_{s}%
^{12,2}Z_{s}^{2}+n_{s}^{12}\right)  d\left\langle B\right\rangle _{s}%
^{12}\nonumber\\
&  +\int_{t}^{T}\left(  c_{s}^{22}Y_{s}+d_{s}^{22,1}Z_{s}^{1}+d_{s}%
^{22,2}Z_{s}^{2}+n_{s}^{22}\right)  d\left\langle B\right\rangle _{s}%
^{22}-\int_{t}^{T}\left(  Z_{s}^{1}dB_{s}^{1}+Z_{s}^{2}dB_{s}^{2}\right)
-\left(  K_{T}-K_{t}\right)  .\nonumber
\end{align}
Let $X$ satisfy $G$-SDE%
\[
dX_{t}=a_{t}X_{t}dt+c_{t}^{11}X_{t}d\left\langle B\right\rangle _{t}%
^{11}+2c_{t}^{12}X_{t}d\left\langle B\right\rangle _{t}^{12}+c_{t}^{22}%
X_{t}d\left\langle B\right\rangle _{t}^{22},\text{ }X_{0}=1,
\]
and we have%
\begin{equation}
X_{t}=\exp\left(  \int_{0}^{t}a_{s}ds+\int_{0}^{t}\left(  c_{s}^{11}%
d\left\langle B\right\rangle _{s}^{11}+2c_{s}^{12}d\left\langle B\right\rangle
_{s}^{12}+c_{s}^{22}d\left\langle B\right\rangle _{s}^{22}\right)  \right)  .
\label{X2}%
\end{equation}
Similarly to the case $d=1$, using $G$-It\^{o}'s formula gives that%
\begin{align*}
&  X_{t}Y_{t}+\int_{t}^{T}X_{s}dK_{s}+\int_{t}^{T}X_{s}Z_{s}^{1}\left(
dB_{s}^{1}-b_{s}^{1}ds-d_{s}^{11,1}d\left\langle B\right\rangle _{s}%
^{11}-d_{s}^{22,1}d\left\langle B\right\rangle _{s}^{22}-2d_{s}^{12,1}%
d\left\langle B\right\rangle _{s}^{12}\right) \\
&  +\int_{t}^{T}X_{s}Z_{s}^{2}\left(  dB_{s}^{2}-b_{s}^{2}ds-d_{s}%
^{22,2}d\left\langle B\right\rangle _{s}^{22}-d_{s}^{11,2}d\left\langle
B\right\rangle _{s}^{11}-2d_{s}^{12,2}d\left\langle B\right\rangle _{s}%
^{12}\right) \\
=  &  X_{T}\xi+\int_{t}^{T}m_{s}X_{s}ds+\int_{t}^{T}\left(  n_{s}^{11}%
X_{s}d\left\langle B\right\rangle _{s}^{11}+2n_{s}^{12}X_{s}d\left\langle
B\right\rangle _{s}^{12}+n_{s}^{22}X_{s}d\left\langle B\right\rangle _{s}%
^{22}\right)  .
\end{align*}
So we need to construct an auxiliary $\tilde{G}$-expectation space with enough
dimension such that
\[
B_{t}^{1}-\int_{0}^{t}b_{s}^{1}ds-\int_{0}^{t}d_{s}^{11,1}d\langle
B\rangle_{s}^{11}-\int_{0}^{t}d_{s}^{22,1}d\langle B\rangle_{s}^{22}-\int
_{0}^{t}2d_{s}^{12,1}d\langle B\rangle_{s}^{12},\text{ }t\in\lbrack0,T],
\]
and%
\[
B_{t}^{2}-\int_{0}^{t}b_{s}^{2}ds-\int_{0}^{t}d_{s}^{22,2}d\langle
B\rangle_{s}^{22}-\int_{0}^{t}d_{s}^{11,2}d\langle B\rangle_{s}^{11}-\int
_{0}^{t}2d_{s}^{12,2}d\langle B\rangle_{s}^{12},\text{ }t\in\lbrack0,T],
\]
can be transformed into a $\tilde{G}$-Brownian motion via Girsanov's formula.
To this end, we define
\begin{equation}
\tilde{G}\left(  A\right)  :=\frac{1}{2}\sup_{\tilde{\theta}\in\tilde{\Theta}%
}\operatorname*{tr}[\tilde{\theta}\tilde{\theta}^{\top}A]~\text{ for }%
A\in\mathbb{S}(8), \label{1301}%
\end{equation}
where $\tilde{\Theta}\subset\mathbb{R}^{8\times8}$ is the set of all
$8\times8$ matrices of the form%
\[
\tilde{\theta}=\left(
\begin{array}
[c]{cccc}%
\theta & O_{2} & O_{2} & O_{2}\\
\theta^{-1,\top} & I_{2} & O_{2} & O_{2}\\%
\begin{pmatrix}
0 & \gamma^{11}\\
\gamma^{22} & 0
\end{pmatrix}
\theta^{-1,\top} & O_{2} & I_{2} & O_{2}\\
\gamma^{12}\theta^{-1,\top} & O_{2} & O_{2} & I_{2}%
\end{array}
\right) ~ \text{ with }~%
\begin{pmatrix}
\gamma^{11} & \gamma^{12}\\
\gamma^{12} & \gamma^{22}%
\end{pmatrix}
:=\theta\theta^{\top}~\text{ for }\theta\in\Theta\subset\mathbb{R}^{2\times2}.
\]
We can, in turn, construct a $\tilde{G}$-expectation space $(\tilde{\Omega
}_{T}=C_{0}([0,T];\mathbb{R}^{8}),\mathbb{L}_{\tilde{G}}^{1}(\tilde{\Omega
}_{T}),\mathbb{\hat{E}}^{\tilde{G}})$ and let $\tilde{B}=(\tilde{B}^{l}%
)_{l=1}^{8}$ be the corresponding $\tilde{G}$-Brownian motion. We remark that
$\mathbb{\hat{E}}^{\tilde{G}}=\mathbb{\hat{E}}$ on $\mathbb{L}_{G}^{1}%
(\Omega_{T})$. For a clearer understanding in the following, we will make use
of the notations:
\begin{equation}
B^{j}:=\tilde{B}^{j},~\text{ }\dot{B}^{j}:=\tilde{B}^{2+j},~\text{ }\hat{B}%
^{j}:=\tilde{B}^{4+j}~\text{ and }~\bar{B}^{j}:=\tilde{B}^{6+j},~\text{ for
}j=1,2\text{.} \label{N2}%
\end{equation}

\begin{remark}
\label{2dim}In parallel with (\ref{M1}), it is easy to see that, for any
$\theta\in\Theta$ and its corresponding $\tilde{\theta}\in\tilde{\Theta}$,%
\begin{equation}
\tilde{\theta}\tilde{\theta}^{\top}=\left(
\begin{array}
[c]{cccc}%
\begin{array}
[c]{cc}%
\gamma^{11} & \gamma^{12}\\
\gamma^{12} & \gamma^{22}%
\end{array}
&
\begin{array}
[c]{cc}%
1\text{ } & \text{ }0\\
0\text{ } & \text{ }1
\end{array}
&
\begin{array}
[c]{cc}%
0 & \gamma^{22}\\
\gamma^{11} & 0
\end{array}
&
\begin{array}
[c]{cc}%
\gamma^{12} & 0\\
0 & \gamma^{12}%
\end{array}
\\%
\begin{array}
[c]{cc}%
1\text{ } & \text{ }0\\
0\text{ } & \text{ }1
\end{array}
&  &  & \\%
\begin{array}
[c]{cc}%
0 & \gamma^{11}\\
\gamma^{22} & 0
\end{array}
&  & RR^{\top} & \\%
\begin{array}
[c]{cc}%
\gamma^{12} & 0\\
0 & \gamma^{12}%
\end{array}
&  &  &
\end{array}
\right)  \label{Q1802}%
\end{equation}
with%
\[
R:=\left(
\begin{array}
[c]{cccc}%
\theta^{-1,\top} & I_{2} & O_{2} & O_{2}\\%
\begin{pmatrix}
0 & \gamma^{11}\\
\gamma^{22} & 0
\end{pmatrix}
\theta^{-1,\top} & O_{2} & I_{2} & O_{2}\\
\gamma^{12}\theta^{-1,\top} & O_{2} & O_{2} & I_{2}%
\end{array}
\right)  \in\mathbb{R}^{6\times8}~\text{ and }~%
\begin{pmatrix}
\gamma^{11} & \gamma^{12}\\
\gamma^{12} & \gamma^{22}%
\end{pmatrix}
:=\theta\theta^{\top}.
\]
We claim that, for $i,j=1,2,$
\[
\langle B^{i},\dot{B}^{j}\rangle_{t}=\left\{
\begin{array}
[c]{cc}%
t, & \text{if }i=j,\\
0, & \text{if }i\neq j.
\end{array}
\right.  \text{ \ }\langle B^{i},\hat{B}^{j}\rangle_{t}=\left\{
\begin{array}
[c]{ll}%
0, & \text{if }i=j,\\
\langle B\rangle_{t}^{jj}, & \text{if }i\neq j.
\end{array}
\right.  \text{ \ }\langle B^{i},\bar{B}^{j}\rangle_{t}=\left\{
\begin{array}
[c]{ll}%
\langle B\rangle_{t}^{12}, & \text{if }i=j,\\
0, & \text{if }i\neq j.
\end{array}
\right.
\]
In fact, applying \cite[Proposition 3.5.7]{PengBOOK} to $\varphi
(x)=|x^{11}-x^{25}|,x=(x^{ij})_{i,j=1}^{8}\in\mathbb{S}(8)$, we can easily
check by (\ref{N2}) and (\ref{Q1802}) that%
\[
\mathbb{\hat{E}}^{\tilde{G}}\left[  \left\vert \langle B\rangle_{t}%
^{11}-\langle B^{2},\hat{B}^{1}\rangle_{t}\right\vert \right]  =\mathbb{\hat
{E}}^{\tilde{G}}\left[  \left\vert \langle\tilde{B}\rangle_{t}^{11}%
-\langle\tilde{B}\rangle_{t}^{25}\right\vert \right]  =\sup_{x\in\tilde
{\theta}\tilde{\theta}^{\top}}\left\vert x^{11}-x^{25}\right\vert =0,
\]
which implies that $\langle B\rangle_{t}^{11}=\langle B^{2},\hat{B}^{1}%
\rangle_{t}$, and we can verify other equalities similarly.
\end{remark}

Corresponding to each $\tilde{B}^{l},l=1,\ldots,8,$ we choose an
$8$-dimensional process $\Lambda=(\Lambda^{l})_{l=1}^{8}$ using the similar
notation as in (\ref{N2}): for $j=1,2$ and $i\neq j$,%
\begin{equation}
\Lambda^{j}:=\lambda^{j}:=d^{jj,j},\text{ \ \ }\Lambda^{2+j}:=\dot{\lambda
}^{j}:=b^{j},\text{ \ \ }\Lambda^{4+j}:=\hat{\lambda}^{j}:=d^{jj,i},\text{
\ \ }\Lambda^{6+j}:=\bar{\lambda}^{j}:=2d^{ji,j}-d^{ii,i}. \label{2dimlamda}%
\end{equation}
Noting that $\Lambda$ is a $\tilde{G}$-BMO martingale generator, we know from
Theorem \ref{Girsanov} that
\[
\tilde{B}_{t}^{\Lambda,1}:=\tilde{B}_{t}^{1}-\sum_{j=1}^{8}\int_{0}^{t}%
\Lambda_{s}^{j}d\langle\tilde{B}^{1},\tilde{B}^{j}\rangle_{t}~\text{ and
}~\tilde{B}_{t}^{\Lambda,2}:=\tilde{B}_{t}^{2}-\sum_{j=1}^{8}\int_{0}%
^{t}\Lambda_{s}^{j}d\langle\tilde{B}^{2},\tilde{B}^{j}\rangle_{t},\text{ }%
t\in\lbrack0,T],
\]
are the first two components of the $\tilde{G}$-Brownian motion on the space
$(\tilde{\Omega}_{T},\mathbb{L}_{\tilde{G}}^{\Lambda,1}(\tilde{\Omega}%
_{T}),\mathbb{\hat{E}}^{\tilde{G},\Lambda})$, where%
\[
\mathbb{\hat{E}}^{\tilde{G},\Lambda}\left[  X\right]  =\mathbb{\hat{E}%
}^{\tilde{G}}\left[  \mathcal{E}_{T}\left(
%TCIMACRO{\tint }%
%BeginExpansion
{\textstyle\int}
%EndExpansion
\Lambda d\tilde{B}\right)  X\right] ~ \text{ and }~\mathbb{\hat{E}}_{t}%
^{\tilde{G},\Lambda}\left[  X\right]  =\mathbb{\hat{E}}_{t}^{\tilde{G}}\left[
\mathcal{E}_{T}^{t}\left(
%TCIMACRO{\tint }%
%BeginExpansion
{\textstyle\int}
%EndExpansion
\Lambda d\tilde{B}\right)  X\right] ~ \text{ for }X\in\mathbb{L}_{\tilde{G}%
}^{\Lambda,1}(\tilde{\Omega}_{T})\text{.}%
\]
By Remark \ref{2dim} and (\ref{2dimlamda}), we deduce that%
\begin{align*}
\tilde{B}_{t}^{\Lambda,1}=  &  B_{t}^{1}-\int_{0}^{t}d_{s}^{11,1}d\langle
B\rangle_{s}^{11}-\int_{0}^{t}d_{s}^{22,2}d\langle B\rangle_{s}^{12}-\int
_{0}^{t}b_{s}^{1}d\langle B^{1},\dot{B}^{1}\rangle_{s}-\int_{0}^{t}b_{s}%
^{2}d\langle B^{1},\dot{B}^{2}\rangle_{s}\\
&  -\int_{0}^{t}d_{s}^{11,2}d\langle B^{1},\hat{B}^{1}\rangle_{s}-\int_{0}%
^{t}d_{s}^{22,1}d\langle B^{1},\hat{B}^{2}\rangle_{s}\\
&  -\int_{0}^{t}\left(  2d_{s}^{12,1}-d_{s}^{22,2}\right)  d\langle B^{1}%
,\bar{B}^{2}\rangle_{s}-\int_{0}^{t}\left(  2d_{s}^{21,2}-d_{s}^{11,1}\right)
d\langle B^{1},\bar{B}^{2}\rangle_{s}\\
=  &  B_{t}^{1}-\int_{0}^{t}d_{s}^{11,1}d\langle B\rangle_{s}^{11}-\int
_{0}^{t}b_{s}^{1}ds-\int_{0}^{t}d_{s}^{22,1}d\langle B\rangle_{s}^{22}%
-\int_{0}^{t}2d_{s}^{12,1}d\langle B\rangle_{s}^{12}%
\end{align*}
and, similarly,
\[
\tilde{B}_{t}^{\Lambda,2}=B_{t}^{2}-\int_{0}^{t}d_{s}^{22,2}d\langle
B\rangle_{s}^{22}-\int_{0}^{t}b_{s}^{2}ds-\int_{0}^{t}d_{s}^{11,2}d\langle
B\rangle_{s}^{11}-\int_{0}^{t}2d_{s}^{12,2}d\langle B\rangle_{s}^{12}.
\]
Then, similarly to Proposition \ref{1dimProp}, we can derive the
representation for the solution of linear $G$-BSDE (\ref{2dimLG}).

\begin{proposition}
Suppose $(Y,Z,K)\in\mathfrak{S}_{G}^{2}(0,T)$ is the solution of the linear
$G$-BSDE (\ref{2dimLG}) satisfying the following conditions:

\begin{enumerate}
\item[(i)] $Z$ is a $2$-dimensional $G$-BMO martingale generator,

\item[(ii)] $K_{T}\in\mathbb{L}_{G}^{p}(\Omega_{T})$ for some $p>\frac{q}%
{q-1}.$
\end{enumerate}

\noindent\noindent Then we have%
\[
Y_{t}=\left(  X_{t}\right)  ^{-1}\mathbb{\hat{E}}_{t}^{\tilde{G},\Lambda
}\left[  X_{T}\xi+\int_{t}^{T}m_{s}X_{s}ds+\sum_{i,j=1}^{2}\int_{t}^{T}%
n_{s}^{ij}X_{s}d\left\langle B\right\rangle _{s}^{ij}\right]
\]
in the $8$-dimensional $\tilde{G}$-expectation space $(\tilde{\Omega}%
_{T},\mathbb{L}_{\tilde{G}}^{\Lambda,1}(\tilde{\Omega}_{T}),\mathbb{\hat{E}%
}^{\tilde{G},\Lambda})$, where $\tilde{G}$, $X$ and $\Lambda$ are given by
(\ref{1301}), (\ref{X2}) and (\ref{2dimlamda}), respectively.
\end{proposition}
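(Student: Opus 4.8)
The plan is to mirror the structure of the proof of Proposition \ref{1dimProp}, carrying out the analogous steps in the $8$-dimensional extended space that has already been constructed. First I would apply the $G$-It\^{o} formula to the product $X_tY_t$, using the $G$-SDE satisfied by $X$ given before (\ref{X2}) together with the linear $G$-BSDE (\ref{2dimLG}); the defining choice of $X$ is precisely what cancels all the $Y_s$ terms (the $a_sY_s$, $c_s^{ij}Y_s$ contributions), leaving the identity displayed just before equation (\ref{1301}), namely
\[
X_{t}Y_{t}+\int_{t}^{T}X_{s}dK_{s}+\int_{t}^{T}X_{s}Z_{s}^{1}d\tilde{B}_{s}^{\Lambda,1}+\int_{t}^{T}X_{s}Z_{s}^{2}d\tilde{B}_{s}^{\Lambda,2}=X_{T}\xi+\int_{t}^{T}m_{s}X_{s}ds+\sum_{i,j=1}^{2}\int_{t}^{T}n_{s}^{ij}X_{s}d\left\langle B\right\rangle _{s}^{ij},
\]
where $\tilde{B}^{\Lambda,1},\tilde{B}^{\Lambda,2}$ are exactly the first two components of the Girsanov-shifted $\tilde{G}$-Brownian motion identified in the computation preceding this statement. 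The key point that must be verified here is that the two stochastic-integral terms involving $Z^1,Z^2$ against $dB^1,dB^2$ reorganize, after adding the drift and quadratic-variation corrections from the It\^{o} formula, precisely into integrals against $d\tilde{B}^{\Lambda,1}$ and $d\tilde{B}^{\Lambda,2}$; this is a bookkeeping check using Remark \ref{2dim} and the definition (\ref{2dimlamda}) of $\Lambda$, which is why the paper set up those cross-variation formulas.

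Next I would check the integrability needed to take the conditional $\tilde{G}$-expectation $\mathbb{\hat{E}}_t^{\tilde{G},\Lambda}$ and kill the martingale terms. Since $Z$ is a $2$-dimensional $G$-BMO martingale generator and hence a $\tilde{G}$-BMO martingale generator, and $X$ is uniformly bounded (it is an exponential of a bounded drift plus bounded quadratic-variation integrals), Remark \ref{ReBMO}(iii) gives $\mathbb{\hat{E}}^{\tilde{G},\Lambda}[(\int_t^T|X_sZ_s|^2ds)^{p/2}]<\infty$ for all $p\geq1$, so by Proposition \ref{BDG} (applied in the extended space) we get $\mathbb{\hat{E}}_t^{\tilde{G},\Lambda}[\int_t^TX_sZ_s^id\tilde{B}_s^{\Lambda,i}]=0$ for $i=1,2$. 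For the $K$-term I would argue as in the $d=1$ case: $\mathbb{\hat{E}}^{\tilde{G}}$ coincides with $\mathbb{\hat{E}}$ on $\mathbb{L}_G^1(\Omega_T)$, so $K$ remains a nonincreasing $\tilde{G}$-martingale, and by \cite[Lemma 3.4]{HLS2018} together with \cite[Lemma 3.4]{HJPS2014a} the process $\int_0^\cdot X_sdK_s$ is still a nonincreasing $\tilde{G}$-martingale, which after the Girsanov change of measure (using $K_T\in\mathbb{L}_G^p(\Omega_T)$ with $p>q/(q-1)$ so that things stay integrable under $\mathbb{\hat{E}}^{\tilde{G},\Lambda}$, cf.\ Remark \ref{ReBMO}(ii)) remains a nonincreasing $\tilde{G}$-martingale under $\mathbb{\hat{E}}^{\tilde{G},\Lambda}$. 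Taking $\mathbb{\hat{E}}_t^{\tilde{G},\Lambda}$ on both sides of the displayed identity then drops $\int_t^TX_sdK_s$ and the two $Z$-integrals, and since $Y_T=\xi$ we arrive at
\[
X_tY_t=\mathbb{\hat{E}}_t^{\tilde{G},\Lambda}\left[X_T\xi+\int_t^Tm_sX_sds+\sum_{i,j=1}^2\int_t^Tn_s^{ij}X_sd\langle B\rangle_s^{ij}\right];
\]
dividing by the (strictly positive, bounded) random variable $X_t$ gives the claimed representation.

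The main obstacle, and the only genuinely new difficulty compared to the $d=1$ case, is the first step: confirming that the corrective terms produced by $G$-It\^{o}'s formula applied to $X_tY_t$ combine with the $dB^1,dB^2$ stochastic integrals to yield exactly $\int X_sZ_s^id\tilde{B}_s^{\Lambda,i}$. This is delicate because of the nondiagonal quadratic-variation structure in dimension two — the shifts defining $\tilde{B}^{\Lambda,1},\tilde{B}^{\Lambda,2}$ involve not only $b^j$ and $d^{jj,j}$ but also the off-diagonal coefficients $d^{jj,i}$ and the combinations $2d^{ji,j}-d^{ii,i}$ through the auxiliary components $\hat{B}^j,\bar{B}^j$ — and it requires the cross-variation identities of Remark \ref{2dim} to be invoked term by term, exactly as in the computation of $\tilde{B}_t^{\Lambda,1}$ already displayed. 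Once that algebraic identification is in hand, the remaining integrability and martingale-preservation arguments are essentially verbatim repetitions of the $d=1$ proof, so I would state them briefly and refer back to Proposition \ref{1dimProp}.
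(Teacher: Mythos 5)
Your proposal is correct and follows essentially the same route as the paper, which proves this proposition only implicitly by the phrase ``similarly to Proposition \ref{1dimProp}'' after displaying the $G$-It\^{o} identity for $X_tY_t$ and the computation identifying $\tilde{B}^{\Lambda,1},\tilde{B}^{\Lambda,2}$ via Remark \ref{2dim} and (\ref{2dimlamda}). Your three steps --- the algebraic reorganization of the stochastic integrals into integrals against $d\tilde{B}^{\Lambda,i}$, the vanishing of these terms under $\mathbb{\hat{E}}_t^{\tilde{G},\Lambda}$ via Remark \ref{ReBMO}, and the preservation of $\int_0^{\cdot} X_s\,dK_s$ as a nonincreasing $\tilde{G}$-martingale via the cited lemmas --- are exactly the ingredients the paper relies on.
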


\subsection{General case}

Now we deal with the general linear $G$-BSDE (\ref{LE}) driven by
$d$-dimensional $G$-Brownian motion with $G$ given by (\ref{G}). Let $X$ be
the solution to linear $G$-SDE%
\[
dX_{t}=a_{t}X_{t}dt+\sum_{i,j=1}^{d}c_{t}^{ij}X_{t}d\left\langle
B\right\rangle _{t}^{ij},~\text{ }X_{0}=1,
\]
and it has the representation
\begin{equation}
X_{t}=\exp\left(  \int_{0}^{t}a_{s}ds+\sum_{i,j=1}^{d}\int_{0}^{t}c_{s}%
^{ij}d\left\langle B\right\rangle _{s}^{ij}\right)  . \label{LGSDE}%
\end{equation}
Applying $G$-It\^{o}'s formula to $XY$ implies that%
\begin{align}
&  X_{t}Y_{t}+\int_{t}^{T}X_{s}dK_{s}+\int_{t}^{T}X_{s}Z_{s}\left(
dB_{s}-b_{s}ds-\sum_{i,j=1}^{d}d_{s}^{ij}d\left\langle B\right\rangle
_{s}^{ij}\right) \label{xy}\\
=  &  X_{T}\xi+\int_{t}^{T}m_{s}X_{s}ds+\sum_{i,j=1}^{d}\int_{t}^{T}n_{s}%
^{ij}X_{s}d\left\langle B\right\rangle _{s}^{ij}.\nonumber
\end{align}
Similarly to previous arguments for cases $d=1$ and $d=2$, our aim is to
construct an extended $\tilde{G}$-expectation space such that $B_{t}-\int
_{0}^{t}b_{s}ds-\sum_{i,j=1}^{d}\int_{0}^{t}d_{s}^{ij}d\langle B\rangle
_{s}^{ij}$ is a $\tilde{G}$-Brownian motion under Girsanov's transformation.
In fact, the dimension of the extended space should be%
\begin{equation}
\tilde{d}:=d+d+d\left(  d-1\right)  +\frac{d\left(  d-1\right)  }{2}d.
\label{ddim}%
\end{equation}

For a clearer understanding of the construction below, we give the following notations:

\begin{notation}
\begin{enumerate}
\label{Nn}

\item[(i)] For $h=1,\ldots,\frac{d(d-1)}{2}$, there is a unique pair $(i,j)$
with $i=1,\ldots,d$ and $j=i+1,\ldots,d$ such that%
\[
h=\sum_{m=0}^{i-1}\left(  d-m\right)  -d+j-i,
\]
and we denote this one-to-one correspondence by $h\sim(i,j);$

\item[(ii)] For $l=2d+1,\ldots,2d+d(d-1)$, there is a unique pair $(i,k)$ with
$i=1,\ldots,d$ and $k=1,\ldots,i-1,i+1,\ldots,d$ such that
\[
l=2d+(i-1)(d-1)+k-\mathbf{1}_{k>i},
\]
and we denote this one-to-one correspondence by $l\hat{\sim}(i,k);$

\item[(iii)] For $l=2d+d\left(  d-1\right)  +1,\ldots,\tilde{d}$, there is a
unique triplet $(i,j,k)$ with $i=1,\ldots,d-1$, $j=i+1,\ldots,d$ and
$k=1,\ldots,d$ such that%
\[
l=2d+d\left(  d-1\right)  +\left(  \sum_{m=0}^{i-1}\left(  d-m\right)
-d\right)  d+\left(  j-i-1\right)  d+k,
\]
and we denote this one-to-one correspondence by $l\bar{\sim}(i,j,k).$
\end{enumerate}
\end{notation}

\begin{example}
For $d=3$, we give the one-to-one correspondence for $h\sim(i,j)$, $l\hat
{\sim}(i,k)$ and $l\bar{\sim}(i,j,k)$: \begin{spacing}{0.6}%
\begin{align*}
&
\begin{tabular}
[c]{cccc}\toprule
$h$ & $1$ & $2$ & $3$\\\hline\addlinespace
$(i,j)$ & $(1,2)$ & $(1,3)$ & $(2,3)$\\\bottomrule
\end{tabular}
\\
&
\begin{tabular}
[c]{ccccccc}\toprule
$l$ & $7$ & $8$ & $9$ & $10$ & $11$ & $12$\\\hline\addlinespace
$(i,k)$ & $(1,2)$ & $(1,3)$ & $(2,1)$ & $(2,3)$ & $(3,1)$ & $(3,2)$\\\bottomrule
\end{tabular}
\\
&
\begin{tabular}
[c]{cccccccccc}\toprule
$l$ & $13$ & $14$ & $15$ & $16$ & $17$ & $18$ & $19$ & $20$ & $21$\\\hline\addlinespace
$(i,j,k)$ & $(1,2,1)$ & $(1,2,2)$ & $(1,2,3)$ & $(1,3,1)$ & $(1,3,2)$ &
$(1,3,3)$ & $(2,3,1)$ & $(2,3,2)$ & $(2,3,3)$\\\bottomrule
\end{tabular}
\end{align*}
\end{spacing}

\end{example}

We choose $\tilde{\Theta}\subset\mathbb{R}^{\tilde{d}\times\tilde{d}}$ as the
set of all $\tilde{d}\times\tilde{d}$ matrices of the form%
\[
\tilde{\theta}=\left(
\begin{array}
[c]{cccc}%
\theta & O_{d} & O_{d\times d\left(  d-1\right)  } & O_{d\times\frac{d\left(
d-1\right)  }{2}d}\\
\dot{\theta} & I_{d} & O_{d\times d\left(  d-1\right)  } & O_{d\times
\frac{d\left(  d-1\right)  }{2}d}\\
\hat{\theta} & O_{d\left(  d-1\right)  \times d} & I_{d\left(  d-1\right)  } &
O_{d\left(  d-1\right)  \times\frac{d\left(  d-1\right)  }{2}d}\\
\bar{\theta} & O_{\frac{d\left(  d-1\right)  }{2}d\times d} & O_{\frac
{d\left(  d-1\right)  }{2}d\times d\left(  d-1\right)  } & I_{\frac{d\left(
d-1\right)  }{2}d}%
\end{array}
\right)~  \text{ for }\theta\in\Theta\subset\mathbb{R}^{d\times d},
\]
where $\Theta$ is the set that represents $G$ in (\ref{G}), and
\[
\dot{\theta}:=\theta^{-1,\top}\in\mathbb{R}^{d\times d},~\text{ \ }\hat{\theta
}:=\left(
\begin{array}
[c]{c}%
\gamma^{11}I_{d}^{\left(  -1\right)  }\\
\vdots\\
\gamma^{dd}I_{d}^{\left(  -d\right)  }%
\end{array}
\right)  \theta^{-1,\top}\in\mathbb{R}^{d\left(  d-1\right)  \times d},~\text{
\ }\bar{\theta}:=\left(
\begin{array}
[c]{c}%
\Gamma^{1}\\
\vdots\\
\Gamma^{\frac{d\left(  d-1\right)  }{2}}%
\end{array}
\right)  \theta^{-1,\top}\in\mathbb{R}^{\frac{d\left(  d-1\right)  }{2}d\times
d}%
\]
with $(\gamma^{ij})_{i,j=1}^{d}:=$ $\theta\theta^{\top}$, $\Gamma^{h}%
:=\gamma^{ij}$ for $h\sim(i,j)$ and $I_{d}^{\left(  -i\right)  }$ being the
$\left(  d-1\right)  \times d$ submatrix of $I_{d}$ deleting the $i$th row.
For this $\tilde{\Theta}$, we define%
\begin{equation}
\tilde{G}\left(  A\right)  :=\frac{1}{2}\sup_{\tilde{\theta}\in\tilde{\Theta}%
}\operatorname*{tr}[\tilde{\theta}\tilde{\theta}^{\top}A]~\text{ for }%
A\in\mathbb{S}(\tilde{d}). \label{Gt}%
\end{equation}
Then we can construct a $\tilde{d}$-dimensional $\tilde{G}$-expectation space
$(\tilde{\Omega}_{T}:=C_{0}([0,T];\mathbb{R}^{\tilde{d}}),\mathbb{L}%
_{\tilde{G}}^{1}(\tilde{\Omega}_{T}),\mathbb{\hat{E}}^{\tilde{G}})$ with this
$\tilde{G}$ and denote by $\tilde{B}:=(\tilde{B}^{l})_{l=1}^{\tilde{d}}$ the
$\tilde{d}$-dimensional $\tilde{G}$-Brownian motion on this space. We note
that $\mathbb{\hat{E}}^{\tilde{G}}[\xi]=\mathbb{\hat{E}}[\xi]$ for each
$\xi\in\mathbb{L}_{G}^{1}\left(  \Omega\right)  $.

In order to clarify the correspondence between the quadratic variation of
$\tilde{B}$ and the set $\tilde{\Theta}\tilde{\Theta}^{\top}:=\{\tilde{\theta
}\tilde{\theta}^{\top},\tilde{\theta}\in\tilde{\Theta}\}$ as well as the one
between $\tilde{B}$ and the $\tilde{G}$-BMO martingale generator to be chosen,
we give the following notation with the help of Notation \ref{Nn}:
\begin{equation}
\tilde{B}^{l}=:\left\{
\begin{array}
[c]{ll}%
B^{l}, & \text{if }l=1,\ldots,d,\\
\dot{B}^{i}~\text{ with }i=l-d, & \text{if }l=d+1,\ldots,d+d,\\
\hat{B}^{i,k}~\text{ with }(i,k)\hat{\sim}l, & \text{if }l=2d+1,\ldots
,2d+d(d-1),\\
\bar{B}^{i,j,k}~\text{ with }(i,j,k)\bar{\sim}l,\text{ \ } & \text{if
}l=2d+d\left(  d-1\right)  +1,\ldots,\tilde{d}.
\end{array}
\right.  \label{NB}%
\end{equation}

\begin{remark}
\label{DQV}For any $\tilde{\theta}\in\tilde{\Theta}$ and its corresponding
$\theta\in\Theta$, we can check that
\begin{equation}
\tilde{\theta}\tilde{\theta}^{\top}=\left(
\begin{array}
[c]{cccc}%
\theta\theta^{\top} & \theta\dot{\theta}^{\top} & \theta\hat{\theta}^{\top} &
\theta\bar{\theta}^{\top}\\
\dot{\theta}\theta^{\top} & \dot{\theta}\dot{\theta}^{\top}+I_{d} &
\dot{\theta}\hat{\theta}^{\top} & \dot{\theta}\bar{\theta}^{\top}\\
\hat{\theta}\theta^{\top} & \hat{\theta}\dot{\theta}^{\top} & \hat{\theta}%
\hat{\theta}^{\top}+I_{d\left(  d-1\right)  } & \hat{\theta}\bar{\theta}%
^{\top}\\
\bar{\theta}\theta^{\top} & \bar{\theta}\dot{\theta}^{\top} & \bar{\theta}%
\hat{\theta}^{\top} & \bar{\theta}\bar{\theta}^{\top}+I_{\frac{d\left(
d-1\right)  }{2}d}%
\end{array}
\right)  \in\mathbb{R}^{\tilde{d}\times\tilde{d}}, \label{em}%
\end{equation}
where, using same notations as above, we have $\theta\theta^{\top}%
=(\gamma^{ij})_{i,j=1}^{d}$, $\dot{\theta}\theta^{\top}=I_{d}$ and
\[
\hat{\theta}\theta^{\top}=\left(
\begin{array}
[c]{c}%
\gamma^{11}I_{d}^{\left(  -1\right)  }\\
\vdots\\
\gamma^{dd}I_{d}^{\left(  -d\right)  }%
\end{array}
\right)  \in\mathbb{R}^{d\left(  d-1\right)  \times d},\text{ \ }\bar{\theta
}\theta^{\top}=\left(
\begin{array}
[c]{c}%
\Gamma^{1}I_{d}\\
\vdots\\
\Gamma^{\frac{d\left(  d-1\right)  }{2}}I_{d}%
\end{array}
\right)  \in\mathbb{R}^{\frac{d\left(  d-1\right)  }{2}d\times d}.
\]
Similarly to Remark \ref{2dim}, we deduce that, for any $m=1,\ldots,d$,%
\begin{align*}
\langle B^{m},\dot{B}^{i}\rangle_{t}  &  =\left\{
\begin{array}
[c]{l}%
t,\text{ if }i=m,\\
0,\text{ otherwise,}%
\end{array}
\right.  \text{ for }i=1,\ldots,d,\\
\langle B^{m},\hat{B}^{i,k}\rangle_{t}  &  =\left\{
\begin{array}
[c]{l}%
\langle B\rangle_{t}^{ii},\text{ if }k=m,\\
0,\text{ otherwise,}%
\end{array}
\right.  \text{ for }i=1,\ldots,d\text{ and }k=1,\ldots,i-1,i+1,\ldots,d,\\
\langle B^{m},\bar{B}^{i,j,k}\rangle_{t}  &  =\left\{
\begin{array}
[c]{l}%
\langle B\rangle_{t}^{ij},\text{ if }k=m,\\
0,\text{ otherwise,}%
\end{array}
\right.  \text{ for }i=1,\ldots,d-1\text{, }j=i+1,\ldots,d\text{ and
}k=1,\ldots,d.
\end{align*}

\end{remark}

Corresponding to $\tilde{B}$ in (\ref{NB}), we choose a $\tilde{d}%
$-dimensional process $\Lambda=(\Lambda^{l})_{l=1}^{\tilde{d}}$ by%
\begin{equation}
\Lambda^{l}:=\left\{
\begin{array}
[c]{ll}%
\lambda^{l}:=d^{ll,l}, & \text{if }l=1,\ldots,d,\\
\dot{\lambda}^{i}:=b^{i}~\text{ with }i=l-d, & \text{if }l=d+1,\ldots,d+d,\\
\hat{\lambda}^{i,k}:=d^{ii,k}~\text{ with }(i,k)\hat{\sim}l, & \text{if
}l=2d+1,\ldots,2d+d(d-1),\\
\bar{\lambda}^{i,j,k}:=2d^{ij,k}-d^{ii,i}\mathbf{1}_{k=j}-d^{jj,j}%
\mathbf{1}_{k=i}~\text{ with }(i,j,k)\bar{\sim}l,\text{ \ } & \text{if
}l=2d+d\left(  d-1\right)  +1,\ldots,\tilde{d},
\end{array}
\right.  \label{lamda}%
\end{equation}
which is a $\tilde{G}$-BMO martingale generator. Then, according to
Theorem \ref{Girsanov}, we know that $\tilde{B}^{\Lambda,m}=(\tilde
{B}^{\Lambda,m})_{m=1}^{\tilde{d}}$ with
\[
\tilde{B}_{t}^{\Lambda,m}:=\tilde{B}_{t}^{m}-\sum_{l=1}^{\tilde{d}}\int
_{0}^{t}\Lambda_{s}^{l}d\langle\tilde{B}\rangle_{s}^{ml},~\text{ }t\in
\lbrack0,T],\text{ }m=1,\ldots,\tilde{d},
\]
is a $\tilde{G}$-Brownian motion on the space $(\tilde{\Omega}_{T}%
,\mathbb{L}_{\tilde{G}}^{\Lambda,1}(\tilde{\Omega}_{T}),\mathbb{\hat{E}%
}^{\tilde{G},\Lambda})$, where%
\[
\mathbb{\hat{E}}^{\tilde{G},\Lambda}\left[  X\right]  =\mathbb{\hat{E}%
}^{\tilde{G}}\left[  \mathcal{E}_{T}\left(
%TCIMACRO{\tint }%
%BeginExpansion
{\textstyle\int}
%EndExpansion
\Lambda d\tilde{B}\right)  X\right]  ~\text{ and }~\mathbb{\hat{E}}_{t}%
^{\tilde{G},\Lambda}\left[  X\right]  =\mathbb{\hat{E}}_{t}^{\tilde{G}}\left[
\mathcal{E}_{T}^{t}\left(
%TCIMACRO{\tint }%
%BeginExpansion
{\textstyle\int}
%EndExpansion
\Lambda d\tilde{B}\right)  X\right]  ~\text{ for }X\in\mathbb{L}_{\tilde{G}%
}^{\Lambda,1}(\tilde{\Omega}_{T}).
\]
In fact, by (\ref{NB}), (\ref{lamda}) and Remark \ref{DQV}, one can easily
check that, for $m=1,\ldots,d,$%
\begin{align}
\tilde{B}_{t}^{\Lambda,m}=  &  B_{t}^{m}-\sum_{l=1}^{d}\int_{0}^{t}%
d_{s}^{ll,l}d\langle B\rangle_{s}^{ml}-\sum_{i=1}^{d}\int_{0}^{t}b_{s}%
^{i}d\langle B^{m},\dot{B}^{i}\rangle_{s}-\sum_{i=1}^{d}\sum_{k=1}^{d}\int
_{0}^{t}d_{s}^{ii,k}\mathbf{1}_{k\neq i}d\langle B^{m},\hat{B}^{i,k}%
\rangle_{s}\nonumber\\
&  -\sum_{\text{ }i=1}^{d-1}\sum_{\text{ }j=i+1}^{d}\sum_{k=1}^{d}\int_{0}%
^{t}\left(  2d_{s}^{ij,k}-d_{s}^{ii,i}\mathbf{1}_{k=j}-d_{s}^{jj,j}%
\mathbf{1}_{k=i}\right)  d\langle B^{m},\bar{B}^{i,j,k}\rangle_{s}\nonumber\\
=  &  B_{t}^{m}-\sum_{l=1}^{d}\int_{0}^{t}d_{s}^{ll,l}d\langle B\rangle
_{s}^{ml}-\int_{0}^{t}b_{s}^{m}ds-\sum_{i=1}^{d}\int_{0}^{t}d_{s}%
^{ii,m}\mathbf{1}_{i\neq m}d\langle B\rangle_{s}^{ii}\nonumber\\
&  -\sum_{\text{ }i=1}^{d-1}\sum_{\text{ }j=i+1}^{d}\int_{0}^{t}2d_{s}%
^{ij,m}d\langle B\rangle_{s}^{ij}+\sum_{\text{ }i=1}^{m-1}\int_{0}^{t}%
d_{s}^{ii,i}d\langle B\rangle_{s}^{im}+\sum_{\text{ }j=m+1}^{d}\int_{0}%
^{t}d_{s}^{jj,j}d\langle B\rangle_{s}^{mj}\nonumber\\
=  &  B_{t}^{m}-\int_{0}^{t}b_{s}^{m}ds-\sum_{i,j=1}^{d}\int_{0}^{t}%
d_{s}^{ij,m}d\langle B\rangle_{s}^{ij}\text{.} \label{AGBM}%
\end{align}
Thus, according to the discussion at the beginning of this subsection, we
apply conditional $\tilde{G}$-expectation $\mathbb{\hat{E}}_{t}^{\tilde
{G},\Lambda}$ on both sides of (\ref{xy}) and then obtain the main result of
this section via the same argument as in Proposition \ref{1dimProp}.

\begin{theorem}
\label{LGBSDEThm}Suppose $(Y,Z,K)\in\mathfrak{S}_{G}^{2}(0,T)$ is the solution
to the linear $G$-BSDE (\ref{LE}) satisfying the following conditions:

\begin{enumerate}
\item[(i)] $Z$ is a $d$-dimensional $G$-BMO martingale generator;

\item[(ii)] $K_{T}\in\mathbb{L}_{G}^{p}(\Omega_{T})$ for some $p>\frac{q}%
{q-1}$.
\end{enumerate}

\noindent Then we have%
\[
Y_{t}=\left(  X_{t}\right)  ^{-1}\mathbb{\hat{E}}_{t}^{\tilde{G},\Lambda
}\left[  X_{T}\xi+\int_{t}^{T}m_{s}X_{s}ds+\sum_{i,j=1}^{d}\int_{t}^{T}%
n_{s}^{ij}X_{s}d\left\langle B\right\rangle _{s}^{ij}\right]
\]
in the $\tilde{d}$-dimensional $\tilde{G}$-expectation space $(\tilde{\Omega
}_{T},\mathbb{L}_{\tilde{G}}^{\Lambda,1}(\tilde{\Omega}_{T}),\mathbb{\hat{E}%
}^{\tilde{G},\Lambda})$, where $\tilde{G}$, $\Lambda$ and $X$ are defined by
(\ref{Gt}), (\ref{lamda}) and (\ref{LGSDE}), respectively.
\end{theorem}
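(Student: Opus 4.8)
The plan is to mirror the structure of the proof of Proposition \ref{1dimProp}, now in the $\tilde{d}$-dimensional extended space. First I would observe that all the preparatory work has already been laid out: the set $\tilde{\Theta}$, the extended generator $\tilde{G}$ in (\ref{Gt}), the $\tilde{d}$-dimensional $\tilde{G}$-Brownian motion $\tilde{B}$ with its quadratic-variation structure from Remark \ref{DQV}, the $\tilde{G}$-BMO martingale generator $\Lambda$ in (\ref{lamda}), and crucially the identity (\ref{AGBM}), which tells us that for $m=1,\ldots,d$ the Girsanov-shifted process $\tilde{B}^{\Lambda,m}$ equals exactly $B_{t}^{m}-\int_{0}^{t}b_{s}^{m}ds-\sum_{i,j=1}^{d}\int_{0}^{t}d_{s}^{ij,m}d\langle B\rangle_{s}^{ij}$. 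This means the troublesome stochastic-integral term appearing in (\ref{xy}), namely $\int_{t}^{T}X_{s}Z_{s}(dB_{s}-b_{s}ds-\sum_{i,j}d_{s}^{ij}d\langle B\rangle_{s}^{ij})$, is precisely $\sum_{m=1}^{d}\int_{t}^{T}X_{s}Z_{s}^{m}d\tilde{B}_{s}^{\Lambda,m}$, i.e.\ a genuine stochastic integral against the first $d$ coordinates of the $\tilde{G}$-Brownian motion under $\mathbb{\hat{E}}^{\tilde{G},\Lambda}$.

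Next I would verify the integrability needed to kill this term under the conditional expectation. Since $Z$ is a $d$-dimensional $G$-BMO martingale generator (hence a $\tilde{G}$-BMO martingale generator, as $\mathbb{\hat{E}}^{\tilde{G}}=\mathbb{\hat{E}}$ on $\mathbb{L}_{G}^{1}(\Omega)$) and $X$ given by (\ref{LGSDE}) is uniformly bounded because $a$ and the $c^{ij}$ are uniformly bounded by assumption (i), Remark \ref{ReBMO}(iii) gives $\mathbb{\hat{E}}^{\tilde{G},\Lambda}[(\int_{t}^{T}|X_{s}Z_{s}|^{2}ds)^{p/2}]<\infty$ for all $p\geq1$, so $XZ\in\mathcal{H}_{\tilde{G}}^{2}$ under $\mathbb{\hat{E}}^{\tilde{G},\Lambda}$ and therefore $\mathbb{\hat{E}}_{t}^{\tilde{G},\Lambda}[\sum_{m=1}^{d}\int_{t}^{T}X_{s}Z_{s}^{m}d\tilde{B}_{s}^{\Lambda,m}]=0$ by Proposition \ref{BDG}. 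I would also check that $X_{T}\xi$, $\int_{t}^{T}m_{s}X_{s}ds$ and $\sum_{i,j}\int_{t}^{T}n_{s}^{ij}X_{s}d\langle B\rangle_{s}^{ij}$ lie in $\mathbb{L}_{\tilde{G}}^{\Lambda,1}(\tilde{\Omega}_{T})$: boundedness of $\xi$ and $X$ handles the first term, while $m,n^{ij}\in\mathcal{H}_{G}^{p}(0,T)$ for $p>\frac{q}{q-1}$ together with Remark \ref{ReBMO}(ii)--(iii) and the bound $d|\langle B\rangle_{t}^{ij}|\leq\bar{\sigma}_{ij}^{2}dt$ handles the other two.

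Then I would handle the nonincreasing $G$-martingale $K$: since $\mathbb{\hat{E}}^{\tilde{G}}$ coincides with $\mathbb{\hat{E}}$ on $\mathbb{L}_{G}^{1}(\Omega_{T})$, $K$ remains a nonincreasing $\tilde{G}$-martingale under $\mathbb{\hat{E}}^{\tilde{G}}$; invoking \cite[Lemma 3.4]{HLS2018} and \cite[Lemma 3.4]{HJPS2014a} (exactly as in the proof of Proposition \ref{1dimProp}, with the boundedness of $X$ and condition (ii) on $K_{T}$ supplying the required integrability), $\int_{0}^{t}X_{s}dK_{s}$ is a nonincreasing $\tilde{G}$-martingale under $\mathbb{\hat{E}}^{\tilde{G},\Lambda}$, so its conditional $\tilde{G}$-expectation over $[t,T]$ is $\leq 0$; but symmetry considerations (as in \cite{HLS2018}) or rather the fact that after applying $\mathbb{\hat{E}}_{t}^{\tilde{G},\Lambda}$ to both sides of (\ref{xy}) the resulting identity is an equality — the relation (\ref{xy}) being an exact identity, not an inequality — force this term to vanish in the final representation; more precisely, one rearranges (\ref{xy}) so that $X_{t}Y_{t}$ is expressed as $\mathbb{\hat{E}}_{t}^{\tilde{G},\Lambda}$ of the right-hand side minus the already-vanishing martingale term and the $K$-term, and the $K$-term disappears because $\int_{t}^{T}X_sdK_s$ contributes a nonincreasing increment whose conditional expectation must reconcile the two sides; I would follow the argument of \cite{HLS2018} verbatim here. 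Applying $\mathbb{\hat{E}}_{t}^{\tilde{G},\Lambda}$ to (\ref{xy}), using $Y_{T}=\xi$, and dividing by $X_{t}>0$ then yields the claimed formula. The main obstacle, and the place I would be most careful, is the bookkeeping in confirming (\ref{AGBM}) feeds correctly into (\ref{xy}) across all $d$ coordinates simultaneously and that $\Lambda$ genuinely satisfies $\|\Lambda\|_{BMO_{\tilde{G}}}\leq\Phi(q)$ — this is precisely assumption (ii) on the coefficients, so it is assumed rather than proved, but one must make sure the definition (\ref{lamda}) of $\Lambda$ matches the $\Lambda$ referenced there; the rest is a faithful transcription of the $d=1$ and $d=2$ arguments.
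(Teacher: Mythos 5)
Your proposal follows the paper's proof essentially verbatim: the paper likewise reduces Theorem \ref{LGBSDEThm} to the identity (\ref{xy}) together with (\ref{AGBM}), kills the stochastic integral term via the $G$-BMO property of $Z$ and boundedness of $X$ (Remark \ref{ReBMO}), treats $\int_0^\cdot X_s dK_s$ as a nonincreasing $\tilde{G}$-martingale via \cite[Lemma 3.4]{HLS2018} and \cite[Lemma 3.4]{HJPS2014a}, and takes $\mathbb{\hat{E}}_t^{\tilde{G},\Lambda}$ of both sides exactly as in Proposition \ref{1dimProp}. The only slight imprecision is your remark that the conditional expectation of the $K$-increment is ``$\leq 0$'': the defining property of a nonincreasing $\tilde{G}$-martingale gives $\mathbb{\hat{E}}_t^{\tilde{G},\Lambda}[\int_t^T X_s dK_s]=0$ exactly, which (combined with the symmetric-martingale part) is what lets the conditional expectation pass through the identity, but since you invoke the correct lemmas and conclusion this is cosmetic rather than a gap.
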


\begin{remark}
\label{Re28}From previous discussions, we see that, due to the nonlinear
structure, the explicit representation for the solutions to linear $G$-BSDEs
in the high dimensional $G$-expectation space is more complex than in the
$1$-dimensional case. In addition, one can check that above result also holds
when $\xi\in\mathbb{L}_{G}^{p}(\Omega_{T})$ for $p>\frac{q}{q-1}$ according to
Remark \ref{ReBMO}.
\end{remark}

\section{Infinite horizon quadratic $G$-BSDEs\label{infQGBSDE}}

In this section, the main purpose is to show the existence and uniqueness
theorem for the quadratic $G$-BSDE\ on the infinite horizon: for any $0\leq
t\leq T<\infty$,%
\begin{equation}
Y_{t}=Y_{T}+\int_{t}^{T}f\left(  s,Y_{s},Z_{s}\right)  ds+\sum_{i,j=1}^{d}%
\int_{t}^{T}g^{ij}\left(  s,Y_{s},Z_{s}\right)  d\left\langle B\right\rangle
_{s}^{ij}-\int_{t}^{T}Z_{s}dB_{s}-\left(  K_{T}-K_{t}\right)  . \label{0}%
\end{equation}
On the generators $f,g^{ij}=g^{ji}:[0,\infty)\times\Omega\times\mathbb{R}%
\times\mathbb{R}^{d}\rightarrow\mathbb{R}$, we assume the following condition
in addition to Assumption \ref{Hf}:

\begin{condition}
\label{HI}There exists a constant $\mu>0$ such that, for each $(t,\omega
)\in\lbrack0,\infty)\times\Omega$ and $z\in\mathbb{R}^{d}$,%
\[
\left(  y-y^{\prime}\right)  \left(  f\left(  t,y,z\right)  -f\left(
t,y^{\prime},z\right)  \right)  +2G\left(  \left(  y-y^{\prime}\right)
\left(  g^{ij}\left(  t,y,z\right)  -g^{ij}\left(  t,y^{\prime},z\right)
\right)  _{i,j=1}^{d}\right)  \leq-\mu\left\vert y-y^{\prime}\right\vert
^{2}.
\]

\end{condition}

Without loss of generality, we can only consider the case when $L_{y}\geq
\frac{\mu}{1-2G\left(  -J_{d}\right)  }$.

\begin{definition}
We call the triplet $(Y,Z,K)$ a solution to infinite horizon $G$-BSDE
(\ref{0}) if

\begin{enumerate}
\item[(i)] $(Y,Z,K)\in\mathfrak{S}_{G}^{2}(0,\infty)$, where $\mathfrak{S}%
_{G}^{2}(0,\infty):=\cap_{T\geq0}\mathfrak{S}_{G}^{2}(0,T)$;

\item[(ii)] it satisfies equation (\ref{0}) for all $0\leq t\leq T<\infty$.
\end{enumerate}
\end{definition}

\subsection{Some useful lemmas\label{lemmas}}

Here we give some lemmas for $G$-BSDEs on finite horizon $[0,T]$ for each
given $T>0$, which will be important in the proof of the main theorem. As a
byproduct, we also show the comparison theorem for quadratic $G$-BSDE on
finite horizon.

First, based on Section \ref{Explicit solution},\ we give the following result
which will be frequently used in the estimates of the difference of two
$G$-BSDEs. Consider the following type of equation
\begin{equation}
Y_{t}=\xi+\int_{t}^{T}f_{s}ds+\sum_{i,j=1}^{d}\int_{t}^{T}g_{s}^{ij}%
d\left\langle B\right\rangle _{s}^{ij}-\int_{t}^{T}Z_{s}dB_{s}-\left(
K_{T}-K_{t}\right)  +\left(  \bar{K}_{T}-\bar{K}_{t}\right)  , \label{dle}%
\end{equation}
where $\xi,f$ and $g^{ij}$ are given by (\ref{gen}) and (\ref{coe}) under
assumptions (i)-(iii) at the beginning of Section \ref{Explicit solution}, and
$\bar{K}$ is a nonincreasing $G$-martingale with $\bar{K}_{0}=0$ and $\bar
{K}_{T}\in\mathbb{L}_{G}^{p}(\Omega_{T})$ for some $p>\frac{q}{q-1}\vee2$.

\begin{lemma}
\label{EY}Suppose $(Y,Z,K)\in$ $\mathfrak{S}_{G}^{2}(0,T)$ is the solution to
equation (\ref{dle}) such that $Z$ is a $G$-BMO martingale generator and
$K_{T}\in${ $\mathbb{L}_{G}^{p}(\Omega_{T})$} for some $p>\frac{q}{q-1}$.
Then,
\[
Y_{t}\leq\left(  X_{t}\right)  ^{-1}\mathbb{\hat{E}}_{t}^{\tilde{G},\Lambda
}\left[  X_{T}\xi+\int_{t}^{T}m_{s}X_{s}ds+\sum_{i,j=1}^{d}\int_{t}^{T}%
n_{s}^{ij}X_{s}d\left\langle B\right\rangle _{s}^{ij}\right]
\]
in the extended $\tilde{d}$-dimensional $\tilde{G}$-expectation space
$(\tilde{\Omega}_{T},\mathbb{L}_{\tilde{G}}^{\Lambda,1}(\tilde{\Omega}%
_{T}),\mathbb{\hat{E}}^{\tilde{G},\Lambda})$, where $\tilde{G}$, $\Lambda$ and
$X$ are defined by (\ref{Gt}), (\ref{lamda}) and (\ref{LGSDE}), respectively.

Moreover, if there exists a constant $\mu>0$ such that%
\begin{equation}
a+2G\left(  \left(  c^{ij}\right)  _{i,j=1}^{d}\right)  \leq-\mu, \label{acm}%
\end{equation}
and $m,n^{ij}$ are bounded by a positive process $\rho\in\mathcal{H}_{G}%
^{p}(0,T)$ for some $p>\frac{q}{q-1}$, then%
\[
\left\vert Y_{t}\right\vert \leq\left\Vert \xi\right\Vert _{\mathbb{L}%
_{G}^{\infty}}e^{-\mu\left(  T-t\right)  }+\left(  1+\bar{\sigma}_{\Sigma}%
^{2}\right)  \mathbb{\hat{E}}_{t}^{\tilde{G},\Lambda}\left[  \int_{t}^{T}%
\rho_{s}e^{-\mu\left(  s-t\right)  }ds\right]  .
\]

\end{lemma}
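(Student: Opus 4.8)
The first assertion is essentially an inequality version of Theorem \ref{LGBSDEThm}: I would rewrite equation (\ref{dle}) as a linear $G$-BSDE of the form (\ref{LE}) but with the modified decreasing martingale $K-\bar K$ in place of $K$. Here $\bar K$ is nonincreasing, so $-\bar K$ is nondecreasing; hence $K-\bar K$ is the difference of a nonincreasing and a nondecreasing process. Applying $G$-It\^o's formula to $X_sY_s$ exactly as in (\ref{xy}) (with the same $X$ from (\ref{LGSDE})) produces
\[
X_tY_t+\int_t^T X_s\,dK_s-\int_t^T X_s\,d\bar K_s+\int_t^T X_sZ_s\bigl(dB_s-b_s\,ds-\sum_{i,j=1}^d d_s^{ij}\,d\langle B\rangle_s^{ij}\bigr)=X_T\xi+\int_t^T m_sX_s\,ds+\sum_{i,j=1}^d\int_t^T n_s^{ij}X_s\,d\langle B\rangle_s^{ij}.
\]
Passing to the extended space $(\tilde\Omega_T,\mathbb{L}_{\tilde G}^{\Lambda,1},\mathbb{\hat E}^{\tilde G,\Lambda})$ as in the proof of Theorem \ref{LGBSDEThm}, the $dB_s-b_s\,ds-\sum d_s^{ij}\,d\langle B\rangle_s^{ij}$ term becomes a $d\tilde B^{\Lambda,m}$-integral with zero conditional expectation (using that $X$ is bounded and $XZ$ is a $\tilde G$-BMO generator, via Remark \ref{ReBMO}); the term $\int_t^T X_s\,dK_s$ is a nonincreasing $\tilde G$-martingale by \cite[Lemma 3.4]{HLS2018} and \cite[Lemma 3.4]{HJPS2014a}, so taking $\mathbb{\hat E}_t^{\tilde G,\Lambda}$ of it is $\le 0$; and $-\int_t^T X_s\,d\bar K_s$ is a nondecreasing process whose conditional expectation is also $\le 0$ (since $X_s>0$ and $\bar K$ nonincreasing). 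Therefore dropping the two martingale/monotone terms after taking $\mathbb{\hat E}_t^{\tilde G,\Lambda}$ only increases the right-hand side, yielding $X_tY_t\le \mathbb{\hat E}_t^{\tilde G,\Lambda}[X_T\xi+\int_t^T m_sX_s\,ds+\sum\int_t^T n_s^{ij}X_s\,d\langle B\rangle_s^{ij}]$, and dividing by $X_t>0$ gives the first claim. One subtlety to check is the integrability needed to legitimately take conditional $\tilde G$-expectations of $\int_t^T X_s\,d\bar K_s$; this follows because $\bar K_T\in\mathbb{L}_G^p$ for $p>\frac{q}{q-1}\vee 2$, $X$ is bounded, and Remark \ref{ReBMO}(ii) ensures membership in $\mathbb{L}_{\tilde G}^{\Lambda,1}$.

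For the second assertion I would estimate each of the three terms in the representation under the sign condition (\ref{acm}). Since $\langle B\rangle_s\in\{s\theta\theta^\top:\theta\in\Theta\}$ and $G$ is given by (\ref{G}), for the exponent process $A_t:=\int_0^t a_s\,ds+\sum_{i,j}\int_0^t c_s^{ij}\,d\langle B\rangle_s^{ij}$ in $X_t=e^{A_t}$ one has, path by path, $dA_s\le\bigl(a_s+\sup_{\theta\in\Theta}\sum_{i,j}c_s^{ij}(\theta\theta^\top)^{ij}\bigr)ds = \bigl(a_s+2G((c_s^{ij})_{i,j})\bigr)ds\le -\mu\,ds$, so $X_T/X_t=e^{A_T-A_t}\le e^{-\mu(T-t)}$ and more generally $X_s/X_t\le e^{-\mu(s-t)}$ for $t\le s\le T$, all q.s. (hence $\tilde G$-q.s. in the extended space, where $\mathbb{\hat E}^{\tilde G}$ restricted to $\mathbb{L}_G^1$ equals $\mathbb{\hat E}$ and the quadratic variations of the $\tilde B^l$ match those of $B$). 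Using also the lower-type estimate applied to $\pm Y$ — equation (\ref{dle}) satisfied by $Y$ means $-Y$ satisfies the analogous equation with generator data $-m,-n^{ij}$ and the roles of the two monotone terms swapped, so the first part gives a symmetric lower bound — I get
\[
-\bigl(X_t\bigr)^{-1}\mathbb{\hat E}_t^{\tilde G,\Lambda}\Bigl[X_T\|\xi\|_{\mathbb{L}_G^\infty}+\int_t^T \rho_sX_s\,ds+\sum_{i,j=1}^d\int_t^T \rho_sX_s\,d\langle B\rangle_s^{ij}\Bigr]\le Y_t\le \bigl(X_t\bigr)^{-1}\mathbb{\hat E}_t^{\tilde G,\Lambda}\Bigl[X_T\|\xi\|_{\mathbb{L}_G^\infty}+\int_t^T \rho_sX_s\,ds+\sum_{i,j=1}^d\int_t^T \rho_sX_s\,d\langle B\rangle_s^{ij}\Bigr],
\]
where I used $|\xi|\le\|\xi\|_{\mathbb{L}_G^\infty}$, $|m_s|,|n_s^{ij}|\le\rho_s$ and positivity of $\rho$ and $X$ to bound the integrands, pulling $X_T\le X_te^{-\mu(T-t)}$ and $X_s\le X_te^{-\mu(s-t)}$ out, and replacing $\sum_{i,j}\int_t^T \rho_sX_s\,d\langle B\rangle_s^{ij}$ by $\bar\sigma_\Sigma^2\int_t^T \rho_sX_s\,ds$ via $d|\langle B\rangle_s^{ij}|\le\bar\sigma_{ij}^2\,ds$ and the definition of $\bar\sigma_\Sigma^2$.

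Combining these, $|Y_t|\le\|\xi\|_{\mathbb{L}_G^\infty}e^{-\mu(T-t)}+(1+\bar\sigma_\Sigma^2)\mathbb{\hat E}_t^{\tilde G,\Lambda}[\int_t^T\rho_se^{-\mu(s-t)}\,ds]$, as claimed; monotonicity and sublinearity of $\mathbb{\hat E}_t^{\tilde G,\Lambda}$ are used freely here. The main obstacle I anticipate is not the exponential estimate $X_s/X_t\le e^{-\mu(s-t)}$ — which is a clean consequence of (\ref{G}) and (\ref{acm}) — but the careful bookkeeping needed to justify that $-Y$ again satisfies an equation of type (\ref{dle}) with the hypotheses of the first part intact (in particular that $-Z$ is still a $G$-BMO generator with the same $\Lambda$-bound, and that the increasing term $-\bar K$ now plays the role of the decreasing $K$ and vice versa, so the first part's one-sided inequality applies with the correct orientation). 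Once that symmetry is set up correctly, the rest is a routine application of the representation and the pathwise bounds on $X$.
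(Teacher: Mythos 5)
Your overall architecture (It\^o's formula on $XY$, pass to the extended space, discard the monotone and martingale terms, then apply the result to $\pm Y$ for the two-sided bound) is reasonable, but the first part as written contains a sign error that, taken literally, produces the \emph{reverse} inequality. Since $\bar K$ is nonincreasing and $X>0$, the term $-\int_t^T X_s\,d\bar K_s$ is \emph{nonnegative} pathwise, so its conditional expectation is $\ge 0$, not $\le 0$ as you claim. This is not cosmetic: in your bookkeeping $X_tY_t=\mathbb{\hat E}_t^{\tilde G,\Lambda}[\mathrm{RHS}]-\mathbb{\hat E}_t[\int X\,dK]-\mathbb{\hat E}_t[-\int X\,d\bar K]-\mathbb{\hat E}_t[\int XZ\,d\tilde B^{\Lambda}]$ (pretending linearity), the desired conclusion $X_tY_t\le\mathbb{\hat E}_t^{\tilde G,\Lambda}[\mathrm{RHS}]$ requires precisely $\mathbb{\hat E}_t[-\int X\,d\bar K]\ge 0$; with your stated sign you would get $X_tY_t\ge\mathbb{\hat E}_t^{\tilde G,\Lambda}[\mathrm{RHS}]$. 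Moreover, because $\mathbb{\hat E}_t^{\tilde G,\Lambda}$ is only sublinear, you cannot take conditional expectations term by term at all: the correct order of operations is to discard $-\int_t^T X_s\,d\bar K_s\ge 0$ from the left-hand side \emph{pathwise} (monotonicity), and then apply $\mathbb{\hat E}_t^{\tilde G,\Lambda}$ to $X_tY_t+\int_t^T X_s\,dK_s+\int_t^T X_sZ_s\,d\tilde B_s^{\Lambda}$, where the last two terms must be handled \emph{jointly} — their sum is a $\tilde G$-martingale increment with vanishing conditional expectation by the cited lemmas (separately one only knows $\mathbb{\hat E}_t$ of the sum is $\le 0$ by subadditivity, which is the wrong direction). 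There is also a functional-analytic point you wave at: membership of $\int_t^T X_s\,d\bar K_s$ in $\mathbb{L}_{\tilde G}^{\Lambda,1}(\tilde\Omega_T)$ does not follow merely from an $L^p$-moment bound in the $G$-framework, and this is exactly why the paper instead invokes Proposition 3.5 of \cite{HJPS2014b} to realize $\bar K$ as the $Y$-component of a linear $G$-BSDE, writes $(Y+\bar K,Z,K)$ as a solution of an equation of type (\ref{LE}), applies Theorem \ref{LGBSDEThm} to both, and obtains $Y_t=(Y_t+\bar K_t)-\bar K_t\le (X_t)^{-1}\mathbb{\hat E}_t^{\tilde G,\Lambda}[\Xi]$ purely from subadditivity of the two representations. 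That route avoids every one of the issues above.

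For the second assertion, your pathwise estimate $X_s/X_t\le e^{-\mu(s-t)}$ and the replacement of $d\langle B\rangle^{ij}$ by $\bar\sigma_{ij}^2\,ds$ (via the total variation bound) are fine and match the paper. Applying the first part to $-Y$ is a legitimate alternative to the paper's one-line use of $|\mathbb{\hat E}_t[\eta+\eta']-\mathbb{\hat E}_t[\eta']|\le\mathbb{\hat E}_t[|\eta|]$ on the two representations, but note that the hypotheses on $K$ and $\bar K$ in (\ref{dle}) are not symmetric ($\bar K_T\in\mathbb{L}_G^p$ for $p>\frac{q}{q-1}\vee 2$ versus $K_T\in\mathbb{L}_G^p$ for $p>\frac{q}{q-1}$), so swapping their roles when passing to $-Y$ silently strengthens the assumption on $K$; the paper's argument does not need this.
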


\begin{proof}
First, by Proposition 3.5 in \cite{HJPS2014b}, we know that $(\bar{K}%
,0,\bar{K})\in\mathfrak{S}_{G}^{2}(0,T)$ is a solution to the linear $G$-BSDE%
\begin{align*}
Y_{t}=  &  \bar{K}_{T}+\int_{t}^{T}\left(  a_{s}Y_{s}+b_{s}Z_{s}-a_{s}\bar
{K}_{s}\right)  ds+\sum_{i,j=1}^{d}\int_{t}^{T}\left(  c_{s}^{ij}Y_{s}%
+d_{s}^{ij}Z_{s}-c_{s}^{ij}\bar{K}_{s}\right)  d\left\langle B\right\rangle
_{s}^{ij}\\
&  -\int_{t}^{T}Z_{s}dB_{s}-\left(  \bar{K}_{T}-\bar{K}_{t}\right)  ,
\end{align*}
which together with Theorem \ref{LGBSDEThm} and Remark \ref{Re28} gives that%
\begin{equation}
\bar{K}_{t}=\left(  X_{t}\right)  ^{-1}\mathbb{\hat{E}}_{t}^{\tilde{G}%
,\Lambda}\left[  X_{T}\bar{K}_{T}-\int_{t}^{T}a_{s}X_{s}\bar{K}_{s}%
ds-\sum_{i,j=1}^{d}\int_{t}^{T}c_{s}^{ij}X_{s}\bar{K}_{s}d\left\langle
B\right\rangle _{s}^{ij}\right]  . \label{1502}%
\end{equation}
Next, it is easy to see that $(Y+\bar{K},Z,K)$ satisfies the linear $G$-BSDE%
\begin{align*}
Y_{t}^{\prime}=  &  \left(  \xi+\bar{K}_{T}\right)  +\int_{t}^{T}\left(
a_{s}Y_{s}^{\prime}+b_{s}Z_{s}^{\prime}+m_{s}-a_{s}\bar{K}_{s}\right)
ds+\sum_{i,j=1}^{d}\int_{t}^{T}\left(  c_{s}^{ij}Y_{s}^{\prime}+d_{s}%
^{ij}Z_{s}^{\prime}+n_{s}^{ij}-c_{s}^{ij}\bar{K}_{s}\right)  d\left\langle
B\right\rangle _{s}^{ij}\\
&  -\int_{t}^{T}Z_{s}^{\prime}dB_{s}-\left(  K_{T}^{\prime}-K_{t}^{\prime
}\right)  .
\end{align*}
Then, applying Theorem \ref{LGBSDEThm} to above equation implies that%
\begin{equation}
Y_{t}+\bar{K}_{t}=\left(  X_{t}\right)  ^{-1}\mathbb{\hat{E}}_{t}^{\tilde
{G},\Lambda}\left[  X_{T}\left(  \xi+\bar{K}_{T}\right)  +\int_{t}^{T}\left(
m_{s}-a_{s}\bar{K}_{s}\right)  X_{s}ds+\sum_{i,j=1}^{d}\int_{t}^{T}\left(
n_{s}^{ij}-c_{s}^{ij}\bar{K}_{s}\right)  X_{s}d\left\langle B\right\rangle
_{s}^{ij}\right]  , \label{1501}%
\end{equation}
which together with (\ref{1502}) yields that%
\[
Y_{t}\leq\left(  X_{t}\right)  ^{-1}\mathbb{\hat{E}}_{t}^{\tilde{G},\Lambda
}\left[  X_{T}\xi+\int_{t}^{T}m_{s}X_{s}ds+\sum_{i,j=1}^{d}\int_{t}^{T}%
n_{s}^{ij}X_{s}d\left\langle B\right\rangle _{s}^{ij}\right]  .
\]

If (\ref{acm}) holds, then we have $X_{s}/X_{t}\leq e^{-\mu(s-t)}$ according
to (\ref{LGSDE}). Recalling that $|\mathbb{\hat{E}}_{t}^{\tilde{G},\Lambda
}[\eta+\eta^{\prime}]-\mathbb{\hat{E}}_{t}^{\tilde{G},\Lambda}[\eta^{\prime
}]|\leq\mathbb{\hat{E}}_{t}^{\tilde{G},\Lambda}[|\eta|]$ for $\eta
,\eta^{\prime}\in\mathbb{L}_{\tilde{G}}^{\Lambda,1}(\tilde{\Omega}_{T})$ by
the sublinearity of the conditional expectation, we deduce from (\ref{1502})
and (\ref{1501}) that%
\begin{align*}
\left\vert Y_{t}\right\vert \leq &  \left(  X_{t}\right)  ^{-1}\mathbb{\hat
{E}}_{t}^{\tilde{G},\Lambda}\left[  \left\vert X_{T}\xi+\int_{t}^{T}m_{s}%
X_{s}ds+\sum_{i,j=1}^{d}\int_{t}^{T}n_{s}^{ij}X_{s}d\left\langle
B\right\rangle _{s}^{ij}\right\vert \right] \\
\leq &  \left\Vert \xi\right\Vert _{\mathbb{L}_{G}^{\infty}}e^{-\mu\left(
T-t\right)  }+\left(  1+\bar{\sigma}_{\Sigma}^{2}\right)  \mathbb{\hat{E}}%
_{t}^{\tilde{G},\Lambda}\left[  \int_{t}^{T}\rho_{s}e^{-\mu\left(  s-t\right)
}ds\right]  ,
\end{align*}
which completes the proof.
\end{proof}

\bigskip

Next we give the linearization method which is similar to Lemma 3.1 in
\cite{HW2018}. We mention here we cannot directly apply the linearization
methods used in \cite{BH1998}, \cite{HJPS2014b} or \cite{HW2018} due to the
different integrability assumptions in our framework.

\begin{lemma}
\label{linearization}Let Assumptions \ref{Hf} and \ref{HI} hold. For each
given $\varepsilon>0$ and $y,y^{\prime}\in\mathbb{R}$, $z,z^{\prime}%
\in\mathbb{R}^{d}$, there exist processes $a_{s}^{\varepsilon}(y,y^{\prime
},z)$, $c_{s}^{ij,\varepsilon}(y,y^{\prime},z)$, $m_{s}^{\varepsilon
}(y,y^{\prime},z,z^{\prime})$, $n_{s}^{ij,\varepsilon}(y,y^{\prime
},z,z^{\prime})\in\mathcal{H}_{G}^{2}(0,T)$ and $b_{s}^{\varepsilon}%
(y^{\prime},z,z^{\prime})$, $d_{s}^{ij,\varepsilon}(y^{\prime},z,z^{\prime
})\in\mathcal{H}_{G}^{2}(0,T;\mathbb{R}^{d})$, $i,j=1,\ldots,d$, such that the
following properties hold:

\begin{enumerate}
\item[(i)] the generators $f$ and $g^{ij}$, $i,j=1,\ldots,d$, can be
linearized as%
\[
f\left(  s,y,z\right)  -f\left(  s,y^{\prime},z^{\prime}\right)
=a_{s}^{\varepsilon}\left(  y,y^{\prime},z\right)  \left(  y-y^{\prime
}\right)  +b_{s}^{\varepsilon}\left(  y^{\prime},z,z^{\prime}\right)  \left(
z-z^{\prime}\right)  +m_{s}^{\varepsilon}\left(  y,y^{\prime},z,z^{\prime
}\right)  ,
\]%
\[
g^{ij}\left(  s,y,z\right)  -g^{ij}\left(  s,y^{\prime},z^{\prime}\right)
=c_{s}^{ij,\varepsilon}\left(  y,y^{\prime},z\right)  \left(  y-y^{\prime
}\right)  +d_{s}^{ij,\varepsilon}\left(  y^{\prime},z,z^{\prime}\right)
\left(  z-z^{\prime}\right)  +n_{s}^{ij,\varepsilon}\left(  y,y^{\prime
},z,z^{\prime}\right)  ,
\]

\item[(ii)] these processes satisfy%
\[%
\begin{array}
[c]{ll}%
\left\vert \varphi_{s}\left(  y,y^{\prime},z\right)  \right\vert \leq
L_{y}\text{, } & \varphi=a^{\varepsilon},c^{ij,\varepsilon},\\
\left\vert \psi_{s}\left(  y^{\prime},z,z^{\prime}\right)  \right\vert
\leq2dL_{z}\left(  1+\left\vert z\right\vert +\left\vert z^{\prime}\right\vert
\right)  ,\text{ } & \psi=b^{\varepsilon},d^{ij,\varepsilon},\\
\left\vert \phi_{s}(y,y^{\prime},z,z^{\prime})\right\vert \leq2L_{y}%
\varepsilon+4dL_{z}\left(  1+\left\vert z\right\vert +\left\vert z^{\prime
}\right\vert \right)  \varepsilon, & \phi=m^{\varepsilon},n^{ij,\varepsilon},
\end{array}
\]
and%
\begin{equation}
a_{s}^{\varepsilon}\left(  y,y^{\prime},z\right)  +2G\left(  \left(
c_{s}^{ij,\varepsilon}\left(  y,y^{\prime},z\right)  \right)  _{i,j=1}%
^{d}\right)  \leq-\mu, \label{ac}%
\end{equation}

\item[(iii)] for $Y,Y^{\prime}\in\mathcal{S}_{G}^{2}(0,T)$ and $G$-BMO
martingale generators $Z,Z^{\prime}\in\mathcal{H}_{G}^{2}(0,T;\mathbb{R}^{d}%
)$,
\begin{align*}
&  a_{s}^{\varepsilon}(Y_{s},Y_{s}^{\prime},Z_{s}),c_{s}^{ij,\varepsilon
}(Y_{s},Y_{s}^{\prime},Z_{s}),m_{s}^{\varepsilon}(Y_{s},Y_{s}^{\prime}%
,Z_{s},Z_{s}^{\prime}),n_{s}^{ij,\varepsilon}(Y_{s},Y_{s}^{\prime},Z_{s}%
,Z_{s}^{\prime})\in\mathcal{H}_{G}^{2}(0,T),\\
&  b_{s}^{\varepsilon}(Y_{s}^{\prime},Z_{s},Z_{s}^{\prime}),d_{s}%
^{ij,\varepsilon}(Y_{s}^{\prime},Z_{s},Z_{s}^{\prime})\in\mathcal{H}_{G}%
^{2}(0,T;\mathbb{R}^{d}),
\end{align*}
and, moreover, $b_{s}^{\varepsilon}(Y_{s}^{\prime},Z_{s},Z_{s}^{\prime}%
),d_{s}^{ij,\varepsilon}(Y_{s}^{\prime},Z_{s},Z_{s}^{\prime}),m_{s}%
^{\varepsilon}(Y_{s},Y_{s}^{\prime},Z_{s},Z_{s}^{\prime}),n_{s}%
^{ij,\varepsilon}(Y_{s},Y_{s}^{\prime},Z_{s},Z_{s}^{\prime})$ are $G$-BMO
martingale generators.
\end{enumerate}
\end{lemma}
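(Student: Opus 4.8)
The plan is to build the linearized coefficients from difference quotients of $f$ and $g^{ij}$, but to regularize each quotient by an $\varepsilon$-averaging along the variable in which it is formed, so that for each fixed $s$ the resulting family depends continuously on the parameters $(y,y',z,z')$. This continuity is the crucial feature: it is what will allow us, in part (iii), to substitute the processes $Y,Y',Z,Z'$ and stay inside the \textit{completed} space $\mathcal{H}_G^2$, whereas the bare quotient $\left(f(s,y,z)-f(s,y',z)\right)/(y-y')$ is singular on the diagonal $\{y=y'\}$ and cannot be handled directly in the $G$-framework --- this is precisely the point where the classical linearizations of \cite{BH1998,HJPS2014b,HW2018} must be reworked. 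The averaging introduces an $O(\varepsilon)$ error, which is exactly what is deposited into $m^\varepsilon$ and $n^{ij,\varepsilon}$.

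For the $y$-direction, for $h=f,g^{ij}$ I would set
\[
\alpha^{h,\varepsilon}_s(y,y',z):=\int_0^1\frac{h(s,y'+t(y-y')+\varepsilon,z)-h(s,y'+t(y-y'),z)}{\varepsilon}\,dt,
\]
and take $a^\varepsilon:=\alpha^{f,\varepsilon}$, $c^{ij,\varepsilon}:=\alpha^{g^{ij},\varepsilon}$; a change of variables gives the identity
\[
\alpha^{h,\varepsilon}_s(y,y',z)\,(y-y')=\frac1\varepsilon\int_0^\varepsilon\left(h(s,y+r,z)-h(s,y'+r,z)\right)dr,
\]
from which Assumption \ref{Hf}(iii) yields $|a^\varepsilon|,|c^{ij,\varepsilon}|\le L_y$, while the difference between $h(s,y,z)-h(s,y',z)$ and $\alpha^{h,\varepsilon}_s(y,y',z)(y-y')$ is the mollification error, of order $\varepsilon$. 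The inequality (\ref{ac}) then follows by applying Assumption \ref{HI} at the shifted pair $(y+r,y'+r)$, averaging over $r\in(0,\varepsilon)$, and invoking the convexity (Jensen's inequality) and positive homogeneity of $G$; on the diagonal $y=y'$ it is simply Assumption \ref{HI} at $(y+\varepsilon,y)$ divided by $\varepsilon^2$ after using the homogeneity of $G$ (this is also where the reduction $L_y\ge\mu/(1-2G(-J_d))$ would enter if one instead chose $a^\varepsilon=c^{ij,\varepsilon}=-L_y$ there). For the $z$-direction I would telescope coordinate by coordinate: writing $z^{(k)}$ for the vector obtained from $z'$ by replacing its first $k$ coordinates by those of $z$ (so $z^{(0)}=z'$, $z^{(d)}=z$), linearize each one-coordinate increment $h(s,y',z^{(k)})-h(s,y',z^{(k-1)})$ by the same $\varepsilon$-averaged quotient in the $k$-th slot and assemble the $d$ slopes into $b^\varepsilon$ (resp.\ $d^{ij,\varepsilon}$); since each such intermediate vector has norm at most $|z|+|z'|$, the local Lipschitz bound in Assumption \ref{Hf}(iii) gives (for $\varepsilon\le1$) $|b^\varepsilon|,|d^{ij,\varepsilon}|\le 2dL_z(1+|z|+|z'|)$ together with $O(\varepsilon)$ errors. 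Collecting, from $f(s,y,z)-f(s,y',z')=[f(s,y,z)-f(s,y',z)]+[f(s,y',z)-f(s,y',z')]$ (and similarly for $g^{ij}$) the remainders $m^\varepsilon,n^{ij,\varepsilon}$ are the sum of the two mollification errors, and a routine estimate produces the bound $2L_y\varepsilon+4dL_z(1+|z|+|z'|)\varepsilon$, which gives (i) and (ii).

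For (iii), the issue is integrability rather than any new identity. For fixed parameters, $\alpha^{h,\varepsilon}_s(y,y',z)$ is a bounded-kernel average of translates of $h(\cdot,\cdot,\bar y,\bar z)$, which lies in $\mathcal{H}_G^p(0,T)\subset\mathcal{H}_G^2(0,T)$ for each $(\bar y,\bar z)$ by \cite[Remark 2.15]{HLS2018}, so a Fubini/dominated-convergence argument places $a^\varepsilon_s(y,y',z)$, $c^{ij,\varepsilon}_s(y,y',z)$, $m^\varepsilon_s(y,y',z,z')$, $n^{ij,\varepsilon}_s(y,y',z,z')$ in $\mathcal{H}_G^2(0,T)$ and $b^\varepsilon_s(y',z,z')$, $d^{ij,\varepsilon}_s(y',z,z')$ in $\mathcal{H}_G^2(0,T;\mathbb{R}^d)$. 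After substituting $Y,Y'\in\mathcal{S}_G^2(0,T)$ and $G$-BMO martingale generators $Z,Z'$, the joint continuity of the coefficient maps together with their deterministic uniform bound in $(y,y')$ and local bound in $(z,z')$ let one approximate $Y,Y',Z,Z'$ by simple processes and pass to the limit, so the substituted coefficients again lie in $\mathcal{H}_G^2$; and because $b^\varepsilon(Y',Z,Z')$, $d^{ij,\varepsilon}(Y',Z,Z')$, $m^\varepsilon(Y,Y',Z,Z')$, $n^{ij,\varepsilon}(Y,Y',Z,Z')$ are all dominated, up to a multiplicative constant, by $1+|Z|+|Z'|$, which is a $G$-BMO martingale generator by Remark \ref{ReBMO}(i), they are $G$-BMO martingale generators as well.

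I expect the main obstacle to be precisely this membership of the substituted coefficients in $\mathcal{H}_G^2$: in the classical setting bounded progressively measurable processes suffice, but in the $G$-framework $\mathcal{H}_G^2$ is a completion and one needs the continuity and approximability that the $\varepsilon$-averaging was introduced to supply. This is the structural reason the lemma carries the auxiliary parameter $\varepsilon$ and the $O(\varepsilon)$ remainder, while everything else --- the algebra of difference quotients and the use of Assumption \ref{HI} --- is comparatively routine.
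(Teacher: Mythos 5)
Your construction is a genuine alternative to the paper's: where the paper multiplies the bare difference quotient by the cutoff $l^{\varepsilon}(x,x'):=\mathbf{1}_{|x-x'|\geq\varepsilon}+\frac{|x-x'|}{\varepsilon}\mathbf{1}_{|x-x'|<\varepsilon}$ and falls back on the constant $-\mu/(1-2G(-J_d))$ (resp.\ $L_z$) near the diagonal, you mollify the quotient by an $\varepsilon$-average along the increment. For parts (i) and (ii) this works: your identity $\alpha^{h,\varepsilon}_s(y,y',z)(y-y')=\frac{1}{\varepsilon}\int_0^{\varepsilon}(h(s,y+r,z)-h(s,y'+r,z))\,dr$ is correct, the bounds $|a^{\varepsilon}|\leq L_y$, $|b^{\varepsilon}|\leq 2dL_z(1+|z|+|z'|)$ and the $O(\varepsilon)$ remainder follow from Assumption \ref{Hf}(iii) (for $\varepsilon\leq 1$), and your derivation of (\ref{ac}) by applying Assumption \ref{HI} at the shifted pairs $(y+r,y'+r)$ and using Jensen's inequality plus positive homogeneity of the sublinear $G$ is valid and in fact dispenses with the reduction $L_y\geq\mu/(1-2G(-J_d))$ that the paper's fallback constant requires.

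The gap is in (iii), and you have misdiagnosed the obstacle. The difficulty is not the diagonal singularity in $y$ --- your averaging and the paper's cutoff both remove it, and both yield coefficients that are Lipschitz in $(y,y')$ with constant $O(L_y/\varepsilon)$ --- but the quadratic growth in $z$: the map $z\mapsto a^{\varepsilon}_s(y,y',z)$ is only \emph{locally} Lipschitz, with modulus growing linearly in $|z|$, while $Z$ is merely an element of the completion $\mathcal{H}_G^2(0,T;\mathbb{R}^d)$. Consequently your step ``approximate $Y,Y',Z,Z'$ by simple processes and pass to the limit'' does not go through as stated: convergence $Z^{(k)}\to Z$ in $\mathcal{H}_G^2$ does not control $\hat{\mathbb{E}}[\int_0^T|a^{\varepsilon}_s(Y_s,Y'_s,Z_s)-a^{\varepsilon}_s(Y_s,Y'_s,Z^{(k)}_s)|^2ds]$ without further input, because the error is weighted by $1+|Z_s|+|Z^{(k)}_s|$. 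The paper resolves this by first truncating in $z$, setting $f_n(s,y,z):=f(s,y,\frac{|z|\wedge n}{|z|}z)$, so that the resulting coefficients $a^{\varepsilon,n}$ are \emph{uniformly} Lipschitz in $(y,y',z)$ and \cite[Lemma 3.3]{L2020} applies to place $a^{\varepsilon,n}(Y,Y',Z)$ in $\mathcal{H}_G^2(0,T)$; it then uses the tail bound $|f-f_n|\leq 2L_z|Z_s|^2\mathbf{1}_{|Z_s|>n}$ together with the uniform integrability result \cite[Theorem 4.7]{HWZ2016} to get $\hat{\mathbb{E}}[\int_0^T|Z_s|^2\mathbf{1}_{|Z_s|>n}ds]\to 0$, and finally upgrades the resulting $\mathcal{H}_G^1$-convergence to $\mathcal{H}_G^2$ via the uniform bound on $a^{\varepsilon}$. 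You need this truncation-plus-tail-estimate device (or an equivalent one) on top of your mollification; without it, membership of the substituted coefficients in $\mathcal{H}_G^2$ --- which is also a prerequisite for the $G$-BMO claim, since Definition \ref{GBMO} requires $\mathcal{H}_G^2$ membership --- is not established. The final BMO assertion itself is then correct by pointwise domination by $2dL_z(1+|Z|+|Z'|)$, as you say.
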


\begin{proof}
The main idea of the proof comes from \cite[Lemma 3.1]{HW2018} and \cite[Lemma
3.6]{HLS2018}.

\textbf{(i).} For any $\varepsilon>0$, we define
\[
l^{\varepsilon}\left(  x,x^{\prime}\right)  :=\mathbf{1}_{\left\vert
x-x^{\prime}\right\vert \geq\varepsilon}+\frac{\left\vert x-x^{\prime
}\right\vert }{\varepsilon}\mathbf{1}_{\left\vert x-x^{\prime}\right\vert
<\varepsilon}~\text{,\thinspace\ }x,x^{\prime}\in\mathbb{R}\text{,}%
\]
which is Lipschitz continuous with constant $\frac{1}{\varepsilon}$ and
satisfies $l^{\varepsilon}\left(  x,x^{\prime}\right)  \leq\frac
{1}{\varepsilon}\left\vert x-x^{\prime}\right\vert $. Then we define%
\[
a_{s}^{\varepsilon}\left(  y,y^{\prime},z\right)  :=l^{\varepsilon}\left(
y,y^{\prime}\right)  \frac{f\left(  s,y,z\right)  -f\left(  s,y^{\prime
},z\right)  }{y-y^{\prime}}-\frac{\mu}{1-2G(-J_{d})}\left(  1-l^{\varepsilon
}\left(  y,y^{\prime}\right)  \right)  ,
\]
and $b_{s}^{\varepsilon}(y^{\prime},z,z^{\prime}):=(b_{s}^{\varepsilon
,k}(y^{\prime},z,z^{\prime}))_{k=1}^{d}$ by
\begin{equation}
b_{s}^{\varepsilon,k}\left(  y^{\prime},z,z^{\prime}\right)  :=l^{\varepsilon
}\left(  z_{k},z_{k}^{\prime}\right)  \frac{f\left(  s,y^{\prime},\theta
_{k-1}\right)  -f\left(  s,y^{\prime},\theta_{k}\right)  }{z_{k}-z_{k}%
^{\prime}}+L_{z}\left(  1-l^{\varepsilon}\left(  z_{k},z_{k}^{\prime}\right)
\right)  , \label{Q3001}%
\end{equation}
where $\theta_{0}=z,$ $\theta_{k}=(z_{1}^{\prime},\ldots,z_{k}^{\prime
},z_{k+1},\ldots,z_{d})$ for $1\leq k\leq d$. Moreover, let%
\begin{equation}
m_{s}^{\varepsilon}\left(  y,y^{\prime},z,z^{\prime}\right)  :=f\left(
s,y,z\right)  -f\left(  s,y^{\prime},z^{\prime}\right)  -a_{s}^{\varepsilon
}\left(  y,y^{\prime},z\right)  \left(  y-y^{\prime}\right)  -b_{s}%
^{\varepsilon}\left(  y^{\prime},z,z^{\prime}\right)  \left(  z-z^{\prime
}\right)  . \label{m}%
\end{equation}
We can similarly define $c_{s}^{ij,\varepsilon}(y,y^{\prime},z)$,
$d_{s}^{ij,\varepsilon}(y^{\prime},z,z^{\prime})$, $n_{s}^{ij,\varepsilon
}(y,y^{\prime},z,z^{\prime})$ with respect to $g^{ij}$. It is obvious that (i)
holds and these processes belong to $\mathcal{H}_{G}^{2}(0,T)$(or
$\mathcal{H}_{G}^{2}(0,T;\mathbb{R}^{d})$) since $f,g^{ij}\in\mathcal{H}%
_{G}^{2}(0,T)$.

\textbf{(ii).} Since $0<\mu\leq(1-{2G(-}J_{d}{)})L_{y}$, one can easily check
that%
\[
\left\vert a_{s}^{\varepsilon}\left(  y,y^{\prime},z\right)  \right\vert \leq
l^{\varepsilon}\left(  y,y^{\prime}\right)  L_{y}+\frac{\mu}{1-2G(-J_{d}%
)}\left(  1-l^{\varepsilon}\left(  y,y^{\prime}\right)  \right)  \leq L_{y}.
\]
Moreover, by Assumption \ref{Hf} (iii) and the fact $|\theta_{k}%
|\leq|z|+|z^{\prime}|$, we have%
\begin{align}
\left\vert b_{s}^{\varepsilon,k}\left(  y^{\prime},z,z^{\prime}\right)
\right\vert  &  \leq l^{\varepsilon}\left(  z_{k},z_{k}^{\prime}\right)
L_{z}\left(  1+\left\vert \theta_{k-1}\right\vert +\left\vert \theta
_{k}\right\vert \right)  +L_{z}\left(  1-l^{\varepsilon}\left(  z_{k}%
,z_{k}^{\prime}\right)  \right) \nonumber\\
&  \leq L_{z}\left(  1+\left\vert \theta_{k-1}\right\vert +\left\vert
\theta_{k}\right\vert \right)  \leq2L_{z}\left(  1+\left\vert z\right\vert
+\left\vert z^{\prime}\right\vert \right)  , \label{s}%
\end{align}
and, in turn,%
\[
\left\vert b_{s}^{\varepsilon}\left(  y^{\prime},z,z^{\prime}\right)
\right\vert \leq\sum_{k=1}^{d}\left\vert b_{s}^{\varepsilon,k}\left(
y^{\prime},z,z^{\prime}\right)  \right\vert \leq2dL_{z}\left(  1+\left\vert
z\right\vert +\left\vert z^{\prime}\right\vert \right)  .
\]
Thus, we deduce that%
\begin{align*}
&  \left\vert f\left(  s,y,z\right)  -f\left(  s,y^{\prime},z\right)
-a_{s}^{\varepsilon}\left(  y,y^{\prime},z\right)  \left(  y-y^{\prime
}\right)  \right\vert \\
\leq &  \left\vert f\left(  s,y,z\right)  -f\left(  s,y^{\prime},z\right)
\right\vert \mathbf{1}_{\left\vert y-y^{\prime}\right\vert <\varepsilon
}+\left\vert a_{s}^{\varepsilon}\left(  y,y^{\prime},z\right)  \right\vert
\left\vert y-y^{\prime}\right\vert \mathbf{1}_{\left\vert y-y^{\prime
}\right\vert <\varepsilon}\leq2L_{y}\varepsilon
\end{align*}
and
\begin{align}
&  \left\vert f\left(  s,y^{\prime},z\right)  -f\left(  s,y^{\prime}%
,z^{\prime}\right)  -b_{s}^{\varepsilon}\left(  y^{\prime},z,z^{\prime
}\right)  \left(  z-z^{\prime}\right)  \right\vert \nonumber\\
\leq &  \sum_{k=1}^{d}\left\vert f\left(  s,y^{\prime},\theta_{k-1}\right)
-f\left(  s,y^{\prime},\theta_{k}\right)  -b_{s}^{\varepsilon,k}\left(
y^{\prime},z,z^{\prime}\right)  \left(  z_{k}-z_{k}^{\prime}\right)
\right\vert \nonumber\\
\leq &  \sum_{k=1}^{d}\left(  \left\vert f\left(  s,y^{\prime},\theta
_{k-1}\right)  -f\left(  s,y^{\prime},\theta_{k}\right)  \right\vert
\mathbf{1}_{\left\vert z_{k}-z_{k}^{\prime}\right\vert <\varepsilon
}+\left\vert b_{s}^{\varepsilon,k}\left(  y^{\prime},z,z^{\prime}\right)
\right\vert \left\vert z_{k}-z_{k}^{\prime}\right\vert \mathbf{1}_{\left\vert
z_{k}-z_{k}^{\prime}\right\vert <\varepsilon}\right) \nonumber\\
\leq &  4dL_{z}\varepsilon\left(  1+\left\vert z\right\vert +\left\vert
z^{\prime}\right\vert \right)  . \label{Q2901}%
\end{align}
Combining above two inequalities and the definition of $m_{s}^{\varepsilon
}(y,y^{\prime},z,z^{\prime})$ in (\ref{m}) implies that
\begin{equation}
\left\vert m_{s}^{\varepsilon}\left(  y,y^{\prime},z,z^{\prime}\right)
\right\vert \leq2L_{y}\varepsilon+4dL_{z}\varepsilon\left(  1+\left\vert
z\right\vert +\left\vert z^{\prime}\right\vert \right)  . \label{Q2902}%
\end{equation}
The proofs for $c_{s}^{ij,\varepsilon}(y,y^{\prime},z)$, $d_{s}%
^{ij,\varepsilon}(y^{\prime},z,z^{\prime})$ and $n_{s}^{ij,\varepsilon
}(y,y^{\prime},z,z^{\prime})$ are the same. Moreover, from Assumption
\ref{HI}, it is easy to see that%
\begin{align*}
&  a_{s}^{\varepsilon}\left(  y,y^{\prime},z\right)  +2G\left(  \left(
c_{s}^{ij,\varepsilon}\left(  y,y^{\prime},z\right)  \right)  _{i,j=1}%
^{d}\right) \\
\leq &  \frac{l^{\varepsilon}\left(  y,y^{\prime}\right)  }{\left\vert
y-y^{\prime}\right\vert ^{2}}\left(  \left(  y-y^{\prime}\right)  \left(
f\left(  s,y,z\right)  -f\left(  s,y^{\prime},z\right)  \right)  +2G\left(
\left(  y-y^{\prime}\right)  \left(  g^{ij}\left(  s,y,z\right)
-g^{ij}\left(  s,y^{\prime},z\right)  \right)  _{i,j=1}^{d}\right)  \right) \\
&  -\left(  1-l^{\varepsilon}\left(  y,y^{\prime}\right)  \right)  \frac{\mu
}{1-2G(-J_{d})}\left(  1-2G\left(  -J_{d}\right)  \right) \\
\leq &  -\mu.
\end{align*}

\textbf{(iii). }We first show that $a_{s}^{\varepsilon}(Y_{s},Y_{s}^{\prime
},Z_{s})\in\mathcal{H}_{G}^{2}(0,T)$ for $Y,Y^{\prime}\in\mathcal{S}_{G}%
^{2}(0,T)$ and $Z\in\mathcal{H}_{G}^{2}(0,T;\mathbb{R}^{d})$. To this end, let%
\[
f_{n}\left(  s,y,z\right)  :=f\left(  s,y,\frac{\left\vert z\right\vert \wedge
n}{\left\vert z\right\vert }z\right) ~ \text{ for }n\geq1,
\]
which is Lipschitz in $y$ and $z$ with Lipschitz constants $L_{y}$ and
$L_{z}(1+2n)$, respectively. Then, we define
\[
a_{s}^{\varepsilon,n}\left(  y,y^{\prime},z\right)  :=l^{\varepsilon}\left(
y,y^{\prime}\right)  \frac{f_{n}\left(  s,y,z\right)  -f_{n}\left(
s,y^{\prime},z\right)  }{y-y^{\prime}}-\frac{\mu}{1-2G(-J_{d})}\left(
1-l^{\varepsilon}\left(  y,y^{\prime}\right)  \right)  .
\]
We claim that $a_{s}^{\varepsilon,n}(Y_{s},Y_{s}^{\prime},Z_{s})\in
\mathcal{H}_{G}^{2}(0,T)$. In fact, for $z,\tilde{z}\in\mathbb{R}^{d}$, it is
easy to check that
\begin{align*}
\left\vert a_{s}^{\varepsilon,n}\left(  y,y^{\prime},z\right)  -a_{s}%
^{\varepsilon,n}\left(  y,y^{\prime},\tilde{z}\right)  \right\vert \leq &
\frac{l^{\varepsilon}\left(  y,y^{\prime}\right)  }{\left\vert y-y^{\prime
}\right\vert }\left(  \left\vert f_{n}\left(  s,y,z\right)  -f_{n}\left(
s,y,\tilde{z}\right)  \right\vert +\left\vert f_{n}\left(  s,y^{\prime
},z\right)  -f_{n}\left(  s,y^{\prime},\tilde{z}\right)  \right\vert \right)
\\
\leq &  \frac{2L_{z}\left(  1+2n\right)  }{\varepsilon}\left\vert z-\tilde
{z}\right\vert .
\end{align*}
Moreover, for $y,\tilde{y}\in\mathbb{R}$,
\begin{align*}
&  \left\vert a_{s}^{\varepsilon,n}\left(  y,y^{\prime},z\right)
-a_{s}^{\varepsilon,n}\left(  \tilde{y},y^{\prime},z\right)  \right\vert \\
\leq &  \left\vert \frac{l^{\varepsilon}\left(  y,y^{\prime}\right)
}{y-y^{\prime}}\left(  f_{n}\left(  s,y,z\right)  -f_{n}\left(  s,\tilde
{y},z\right)  \right)  \right\vert +\left\vert \left(  f_{n}\left(
s,\tilde{y},z\right)  -f_{n}\left(  s,y^{\prime},z\right)  \right)  \left(
\frac{l^{\varepsilon}\left(  y,y^{\prime}\right)  }{y-y^{\prime}}%
-\frac{l^{\varepsilon}\left(  \tilde{y},y^{\prime}\right)  }{\tilde
{y}-y^{\prime}}\right)  \right\vert \\
&  +\frac{\mu}{1-2G(-J_{d})}\left\vert l^{\varepsilon}\left(  y,y^{\prime
}\right)  -l^{\varepsilon}\left(  \tilde{y},y^{\prime}\right)  \right\vert \\
\leq &  \frac{L_{y}}{\varepsilon}\left\vert y-\tilde{y}\right\vert
+L_{y}\left(  \left\vert \frac{l^{\varepsilon}\left(  y,y^{\prime}\right)
}{y-y^{\prime}}\right\vert \left\vert y-\tilde{y}\right\vert +\left\vert
l^{\varepsilon}\left(  y,y^{\prime}\right)  -l^{\varepsilon}\left(  \tilde
{y},y^{\prime}\right)  \right\vert \right)  +\frac{L_{y}}{\varepsilon
}\left\vert y-\tilde{y}\right\vert \\
\leq &  \frac{4L_{y}}{\varepsilon}\left\vert y-\tilde{y}\right\vert ,
\end{align*}
and, similarly, we also have
\[
\left\vert a_{s}^{\varepsilon,n}\left(  y,y^{\prime},z\right)  -a_{s}%
^{\varepsilon,n}\left(  y,\tilde{y}^{\prime},z\right)  \right\vert \leq
\frac{4L_{y}}{\varepsilon}\left\vert y^{\prime}-\tilde{y}^{\prime}\right\vert
.
\]
Above inequalities imply that $a_{s}^{\varepsilon,n}$ is uniformly Lipschitz
in $(y,y^{\prime},z)$ for each given $\varepsilon$ and $n$. Applying
\cite[Lemma 3.3]{L2020} prove that $a_{s}^{\varepsilon,n}(Y_{s},Y_{s}^{\prime
},Z_{s})\in\mathcal{H}_{G}^{2}(0,T)$.

We note that%
\[
\left\vert a_{s}^{\varepsilon}\left(  Y_{s},Y_{s}^{\prime},Z_{s}\right)
-a_{s}^{\varepsilon,n}\left(  Y_{s},Y_{s}^{\prime},Z_{s}\right)  \right\vert
\leq\frac{1}{\varepsilon}\left(  \left\vert f\left(  s,Y_{s},Z_{s}\right)
-f_{n}\left(  s,Y_{s},Z_{s}\right)  \right\vert +\left\vert f\left(
s,Y_{s}^{\prime},Z_{s}\right)  -f_{n}\left(  s,Y_{s}^{\prime},Z_{s}\right)
\right\vert \right)  ,
\]
where we have%
\begin{equation}
\left\vert f\left(  s,Y_{s},Z_{s}\right)  -f_{n}\left(  s,Y_{s},Z_{s}\right)
\right\vert \leq L_{z}\left(  1+n+\left\vert Z_{s}\right\vert \right)  \left(
\left\vert Z_{s}\right\vert -n\right)  \mathbf{1}_{\left\vert Z_{s}\right\vert
>n}\leq2L_{z}\left\vert Z_{s}\right\vert ^{2}\mathbf{1}_{\left\vert
Z_{s}\right\vert >n}, \label{Q3003}%
\end{equation}
which also holds when $Y$ is replaced by $Y^{\prime}$. It follows that%
\[
\left\vert a_{s}^{\varepsilon}\left(  Y_{s},Y_{s}^{\prime},Z_{s}\right)
-a_{s}^{\varepsilon,n}\left(  Y_{s},Y_{s}^{\prime},Z_{s}\right)  \right\vert
\leq\frac{4L_{z}}{\varepsilon}\left\vert Z_{s}\right\vert ^{2}\mathbf{1}%
_{\left\vert Z_{s}\right\vert >n}.
\]
Therefore, from \cite[Theorem 4.7]{HWZ2016}, we deduce that
\[
\mathbb{\hat{E}}\left[  \int_{0}^{T}\left\vert a_{s}^{\varepsilon}\left(
Y_{s},Y_{s}^{\prime},Z_{s}\right)  -a_{s}^{\varepsilon,n}\left(  Y_{s}%
,Y_{s}^{\prime},Z_{s}\right)  \right\vert ds\right]  \leq\frac{4L_{z}%
}{\varepsilon}\mathbb{\hat{E}}\left[  \int_{0}^{T}\left\vert Z_{s}\right\vert
^{2}\mathbf{1}_{\left\vert Z_{s}\right\vert >n}ds\right]  \rightarrow0~\text{
as }~n\rightarrow\infty\text{,}%
\]
and, thus, we obtain $a_{s}^{\varepsilon}(Y_{s},Y_{s}^{\prime},Z_{s}%
)\in\mathcal{H}_{G}^{2}(0,T)$ by the boundedness of $a_{s}^{\varepsilon}%
(Y_{s},Y_{s}^{\prime},Z_{s})$.

Next, we verify that, for $Y^{\prime}\in\mathcal{S}_{G}^{2}(0,T)$ and $G$-BMO
martingale generators $Z,Z^{\prime}\in\mathcal{H}_{G}^{2}(0,T;\mathbb{R}^{d}%
)$, $b_{s}^{\varepsilon,k}(Y_{s}^{\prime},Z_{s},Z_{s}^{\prime})$ belongs to
$\mathcal{H}_{G}^{2}(0,T)$ and is a $G$-BMO martingale generator for each
$1\leq k\leq d$. Similarly to the above argument, we define $b_{s}%
^{\varepsilon,n,k}\left(  y^{\prime},z,z^{\prime}\right)  $ by replacing $f$
with $f_{n}$ in (\ref{Q3001}), and one can easily show that $b_{s}%
^{\varepsilon,n,k}\left(  Y_{s}^{\prime},Z_{s},Z_{s}^{\prime}\right)
\in\mathcal{H}_{G}^{2}(0,T)$. Then, using (\ref{Q3003}), we deduce that, for
each $1\leq k\leq d$,
\begin{align*}
&  \left\vert b_{s}^{\varepsilon,k}\left(  Y_{s}^{\prime},Z_{s},Z_{s}^{\prime
}\right)  -b_{s}^{\varepsilon,n,k}(Y_{s}^{\prime},Z_{s},Z_{s}^{\prime
})\right\vert \\
\leq &  \frac{1}{\varepsilon}\left(  \left\vert f\left(  s,Y_{s}^{\prime
},\Theta_{s}^{k-1}\right)  -f_{n}\left(  s,Y_{s}^{\prime},\Theta_{s}%
^{k-1}\right)  \right\vert +\left\vert f\left(  s,Y_{s}^{\prime},\Theta
_{s}^{k}\right)  -f_{n}\left(  s,Y_{s}^{\prime},\Theta_{s}^{k}\right)
\right\vert \right) \\
\leq &  \frac{2L_{z}}{\varepsilon}\left(  \left\vert \Theta_{s}^{k-1}%
\right\vert ^{2}\mathbf{1}_{\left\vert \Theta_{s}^{k-1}\right\vert
>n}+\left\vert \Theta_{s}^{k}\right\vert ^{2}\mathbf{1}_{\left\vert \Theta
_{s}^{k}\right\vert >n}\right)  ,
\end{align*}
where $\Theta_{s}^{0}=Z_{s},$ $\Theta_{s}^{k}=(Z_{s}^{\prime,1},\ldots
,Z_{s}^{\prime,k},Z_{s}^{k+1},\ldots,Z_{s}^{d})$. It follows again from
\cite[Theorem 4.7]{HWZ2016} that%
\begin{align*}
&  \mathbb{\hat{E}}\left[  \int_{0}^{T}\left\vert b_{s}^{\varepsilon,k}\left(
Y_{s}^{\prime},Z_{s},Z_{s}^{\prime}\right)  -b_{s}^{\varepsilon,n,k}%
(Y_{s}^{\prime},Z_{s},Z_{s}^{\prime})\right\vert ds\right] \\
\leq &  \frac{2L_{z}}{\varepsilon}\mathbb{\hat{E}}\left[  \int_{0}^{T}\left(
\left\vert \Theta_{s}^{k-1}\right\vert ^{2}\mathbf{1}_{\left\vert \Theta
_{s}^{k-1}\right\vert >n}+\left\vert \Theta_{s}^{k}\right\vert ^{2}%
\mathbf{1}_{\left\vert \Theta_{s}^{k}\right\vert >n}\right)  ds\right]
\rightarrow0~\text{ as }~n\rightarrow\infty\text{,}%
\end{align*}
which, together with the fact that $|b_{s}^{\varepsilon,k}(Y_{s}^{\prime
},Z_{s},Z_{s}^{\prime})|\leq2dL_{z}(1+|Z_{s}|+|Z_{s}^{\prime}|)$, implies that
$b_{s}^{\varepsilon,k}(Y_{s}^{\prime},Z_{s},Z_{s}^{\prime})\in\mathcal{H}%
_{G}^{2}(0,T)$ is a $G$-BMO martingale generator for each $1\leq k\leq d$.

Finally we obtain that $m_{s}^{\varepsilon}(Y_{s},Y_{s}^{\prime},Z_{s}%
,Z_{s}^{\prime})\in\mathcal{H}_{G}^{2}(0,T)$ is a $G$-BMO martingale by its
definition in (\ref{m}) as well as (\ref{Q2902}), and all the results for
$c_{s}^{ij,\varepsilon}(Y_{s},Y_{s}^{\prime},Z_{s})$, $d_{s}^{ij,\varepsilon
}(Y_{s}^{\prime},Z_{s},Z_{s}^{\prime})$ and $n_{s}^{ij,\varepsilon}%
(Y_{s},Y_{s}^{\prime},Z_{s},Z_{s}^{\prime})$ follow by same arguments.
\end{proof}

\begin{remark}
In Lemma \ref{linearization}, if we only suppose Assumption \ref{Hf} holds,
then the results still hold except inequality (\ref{ac}).
\end{remark}

Next we provide the comparison theorem for quadratic $G$-BSDE on finite
horizon, from which we can also clarify how to apply the linearization method
and the estimates above.

\begin{theorem}
\label{fcomparison}Let $(Y^{l},Z^{l},K^{l})\in\mathfrak{S}_{G}^{2}%
(0,T),l=1,2$, be the solution to $G$-BSDE (\ref{fQBSDE}), where $\xi^{l}%
\in\mathbb{L}_{G}^{\infty}(\Omega_{T})$ and the generators $f_{l},g_{l}%
=(g_{l}^{ij})_{i,j=1}^{d}$ satisfy Assumption \ref{Hf}. If $\xi^{1}\leq\xi
^{2}$, $f_{1}\leq f_{2}$ and $g_{1}\leq g_{2}$, then $Y^{1}\leq Y^{2}$.
\end{theorem}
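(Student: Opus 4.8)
The plan is to reduce the comparison to the linear estimate of Lemma~\ref{EY} through an $\varepsilon$-linearization of the difference $\hat{Y}:=Y^{1}-Y^{2}$, and then let $\varepsilon\downarrow0$. Fix $\varepsilon>0$ and put $\hat{Z}:=Z^{1}-Z^{2}$, $\hat{\xi}:=\xi^{1}-\xi^{2}$. Splitting
\[
f_{1}(s,Y^{1}_{s},Z^{1}_{s})-f_{2}(s,Y^{2}_{s},Z^{2}_{s})=\bigl(f_{1}(s,Y^{1}_{s},Z^{1}_{s})-f_{1}(s,Y^{2}_{s},Z^{2}_{s})\bigr)+(f_{1}-f_{2})(s,Y^{2}_{s},Z^{2}_{s})
\]
and similarly for each $g_{1}^{ij}$, I would apply Lemma~\ref{linearization}(i) to $f_{1}$ and to the $g_{1}^{ij}$, evaluated at $y=Y^{1}_{s}$, $y'=Y^{2}_{s}$, $z=Z^{1}_{s}$, $z'=Z^{2}_{s}$, so that the first bracket equals $a^{\varepsilon}_{s}\hat{Y}_{s}+b^{\varepsilon}_{s}\hat{Z}_{s}+m^{\varepsilon}_{s}$, resp.\ $c^{ij,\varepsilon}_{s}\hat{Y}_{s}+d^{ij,\varepsilon}_{s}\hat{Z}_{s}+n^{ij,\varepsilon}_{s}$. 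Then $(\hat{Y},\hat{Z},K^{1})$ is a solution of an equation of the form (\ref{dle}) with $\bar{K}=K^{2}$, terminal value $\hat{\xi}\in\mathbb{L}_{G}^{\infty}(\Omega_{T})$, coefficients $a^{\varepsilon},b^{\varepsilon},c^{ij,\varepsilon},d^{ij,\varepsilon}$, and free terms $m_{s}:=m^{\varepsilon}_{s}+(f_{1}-f_{2})(s,Y^{2}_{s},Z^{2}_{s})$ and $n^{ij}_{s}:=n^{ij,\varepsilon}_{s}+(g_{1}^{ij}-g_{2}^{ij})(s,Y^{2}_{s},Z^{2}_{s})$.

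Next I would verify the standing hypotheses (i)--(iii) of Section~\ref{Explicit solution}, taking care that the relevant constants are independent of $\varepsilon$. By Remark~\ref{1101}, $Z^{1},Z^{2}$ (hence $\hat{Z}$) are $G$-BMO martingale generators with norms bounded by $\hat{C}$, and $K^{1}_{T},K^{2}_{T}\in\mathbb{L}_{G}^{p}(\Omega_{T})$ for all $p\geq1$. By Lemma~\ref{linearization}, $a^{\varepsilon},c^{ij,\varepsilon}$ are bounded by $L_{y}$, the processes $b^{\varepsilon},d^{ij,\varepsilon}$ are $G$-BMO martingale generators with $|b^{\varepsilon}_{s}|,|d^{ij,\varepsilon}_{s}|\leq 2dL_{z}(1+|Z^{1}_{s}|+|Z^{2}_{s}|)$, and $m^{\varepsilon},n^{ij,\varepsilon}$ are $G$-BMO martingale generators; since the last bound is $\varepsilon$-free, $\|\Lambda\|_{BMO_{G}}$ is dominated by an $\varepsilon$-independent constant, so by (\ref{qBMO}) a single $q>1$ with $\|\Lambda\|_{BMO_{G}}\leq\Phi(q)$ works for every $\varepsilon$, and with this $q$ the free terms lie in $\mathcal{H}_{G}^{p}(0,T)$ for all $p\geq2$ (using \cite[Remark 2.15]{HLS2018} for $(f_{1}-f_{2})(\cdot,Y^{2},Z^{2})$ and $(g_{1}^{ij}-g_{2}^{ij})(\cdot,Y^{2},Z^{2})$). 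Moreover (\ref{ac}) gives (\ref{acm}), hence $X_{s}/X_{t}\leq e^{-\mu(s-t)}$ for $s\geq t$. Lemma~\ref{EY} then provides, in the extended $\tilde{d}$-dimensional $\tilde{G}$-expectation space,
\[
\hat{Y}_{t}\leq(X_{t})^{-1}\,\mathbb{\hat{E}}_{t}^{\tilde{G},\Lambda}\Bigl[X_{T}\hat{\xi}+\int_{t}^{T}m_{s}X_{s}\,ds+\sum_{i,j=1}^{d}\int_{t}^{T}n^{ij}_{s}X_{s}\,d\langle B\rangle^{ij}_{s}\Bigr].
\]
Now $X_{T}\hat{\xi}\leq0$ since $X>0$ and $\hat{\xi}\leq0$; and since $f_{1}\leq f_{2}$ and $g_{1}\leq g_{2}$, the contributions of $(f_{1}-f_{2})(\cdot,Y^{2},Z^{2})$ and $(g_{1}^{ij}-g_{2}^{ij})(\cdot,Y^{2},Z^{2})$ are $\leq0$ quasi-surely, the latter through the q.s.\ inequality $\sum_{i,j}\eta^{ij}\,d\langle B\rangle^{ij}_{s}\leq 2G\bigl((\eta^{ij})_{i,j=1}^{d}\bigr)\,ds$ for symmetric $(\eta^{ij})$. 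The surviving part is absorbed by Lemma~\ref{linearization}(ii): with $\rho_{s}:=2L_{y}+4dL_{z}(1+|Z^{1}_{s}|+|Z^{2}_{s}|)$ one has $|m^{\varepsilon}_{s}|,|n^{ij,\varepsilon}_{s}|\leq\varepsilon\rho_{s}$, so using $d|\langle B\rangle^{ij}_{s}|\leq\bar{\sigma}^{2}_{ij}\,ds$ and $X_{s}/X_{t}\leq e^{-\mu(s-t)}$,
\[
\hat{Y}_{t}\leq\varepsilon\,(1+\bar{\sigma}^{2}_{\Sigma})\,\mathbb{\hat{E}}_{t}^{\tilde{G},\Lambda}\Bigl[\int_{t}^{T}\rho_{s}e^{-\mu(s-t)}\,ds\Bigr].
\]

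Finally I would let $\varepsilon\downarrow0$. Writing $\mathbb{\hat{E}}_{t}^{\tilde{G},\Lambda}[\,\cdot\,]=\mathbb{\hat{E}}_{t}^{\tilde{G}}[\mathcal{E}_{T}^{t}(\int\Lambda\,d\tilde{B})\,\cdot\,]$ and using a conditional Hölder inequality, the reverse Hölder property of $\mathcal{E}(\int\Lambda\,d\tilde{B})$ with constant depending only on $q$ and $\sup_{\varepsilon}\|\Lambda\|_{BMO_{G}}<\infty$ (cf.\ Lemma~\ref{GBMOM}), together with the integrability of $\rho$ from Remark~\ref{ReBMO}, one bounds the right-hand side quasi-surely by $\varepsilon\,C_{t}$ with $C_{t}$ finite and independent of $\varepsilon$; hence $\hat{Y}_{t}\leq0$ q.s.\ for every $t\in[0,T]$, i.e.\ $Y^{1}\leq Y^{2}$. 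I expect this closing estimate to be the main obstacle: the extended generator $\tilde{G}$ does not depend on $\varepsilon$, but the Girsanov weight $\Lambda$ and the exponential weight $X$ do, so one must extract a single admissible BMO exponent $q$ and an $\varepsilon$-uniform reverse-Hölder constant, which is precisely what the $\varepsilon$-free bound $|b^{\varepsilon}|,|d^{ij,\varepsilon}|\leq 2dL_{z}(1+|Z^{1}|+|Z^{2}|)$ of Lemma~\ref{linearization}(ii) makes possible. A lesser subtlety is that, off the diagonal, the hypothesis $g_{1}\leq g_{2}$ enters only through the $G$-type inequality above rather than as a termwise comparison.
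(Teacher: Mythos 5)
Your proposal is correct and follows essentially the same route as the paper's proof: linearize the difference $\hat{Y}=Y^{1}-Y^{2}$ via Lemma \ref{linearization}, recognize the resulting equation as one of type (\ref{dle}) with $\bar{K}=K^{2}$, invoke Lemma \ref{EY} together with Remark \ref{1101} and Remark \ref{ReBMO}, and let $\varepsilon\downarrow0$. Your explicit attention to the $\varepsilon$-uniformity of the BMO norm of $\Lambda$ (hence of the exponent $q$ and the reverse H\"older constant) makes precise a point the paper leaves implicit when it sends $\varepsilon\rightarrow0$ in (\ref{2201}), but it is a refinement of the same argument rather than a different one.
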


\begin{proof}
Denote $\hat{Y}:=Y^{1}-Y^{2}$, $\hat{Z}:=Z^{1}-Z^{2}$ and $\hat{\xi}:=\xi
^{1}-\xi^{2}\leq0$. Then we have%
\[
\hat{Y}_{t}=\hat{\xi}+\int_{t}^{T}\hat{f}_{s}ds+\sum_{i,j=1}^{d}\int_{t}%
^{T}\hat{g}_{s}^{ij}d\left\langle B\right\rangle _{s}^{ij}-\int_{t}^{T}\hat
{Z}_{s}dB_{s}-\left(  K_{T}^{1}-K_{t}^{1}\right)  +\left(  K_{T}^{2}-K_{t}%
^{2}\right)  ,
\]
where%
\[
\hat{f}_{s}:=f_{1}\left(  s,Y_{s}^{1},Z_{s}^{1}\right)  -f_{2}\left(
s,Y_{s}^{2},Z_{s}^{2}\right)  \text{ and }\hat{g}_{s}^{ij}:=g_{1}^{ij}\left(
s,Y_{s}^{1},Z_{s}^{1}\right)  -g_{2}^{ij}\left(  s,Y_{s}^{2},Z_{s}^{2}\right)
.
\]
In the spirit of Lemma \ref{linearization}, for any $\varepsilon>0$, we have%
\begin{align}
\hat{f}_{s}  &  =a_{s}^{\varepsilon}\left(  Y_{s}^{1},Y_{s}^{2},Z_{s}%
^{1}\right)  \hat{Y}_{s}+b_{s}^{\varepsilon}\left(  Y_{s}^{2},Z_{s}^{1}%
,Z_{s}^{2}\right)  \hat{Z}_{s}+m_{s}^{\varepsilon}\left(  Y_{s}^{1},Y_{s}%
^{2},Z_{s}^{1},Z_{s}^{2}\right)  +\left(  f_{1}-f_{2}\right)  \left(
s,Y_{s}^{2},Z_{s}^{2}\right)  ,\label{fh}\\
\hat{g}_{s}^{ij}  &  =c_{s}^{ij,\varepsilon}\left(  Y_{s}^{1},Y_{s}^{2}%
,Z_{s}^{1}\right)  \hat{Y}_{s}+d_{s}^{ij,\varepsilon}\left(  Y_{s}^{2}%
,Z_{s}^{1},Z_{s}^{2}\right)  \hat{Z}_{s}+n_{s}^{ij,\varepsilon}\left(
Y_{s}^{1},Y_{s}^{2},Z_{s}^{1},Z_{s}^{2}\right)  +(g_{1}^{ij}-g_{2}%
^{ij})\left(  s,Y_{s}^{2},Z_{s}^{2}\right)  , \label{gh}%
\end{align}
where the coefficients satisfy the properties given in this lemma. Note that
$\hat{Z}$ is $G$-BMO martingale generator and $K_{T}^{l}\in\mathbb{L}_{G}%
^{p}(\Omega_{T})$ for $p\geq1$ according to Remark \ref{1101}. Then, by Lemmas
\ref{EY} and \ref{linearization}, one can easily check that,%
\begin{equation}
\hat{Y}_{t}\leq2\varepsilon\left(  1+\bar{\sigma}_{\Sigma}^{2}\right)
(\hat{X}_{t})^{-1}\mathbb{\hat{E}}_{t}^{\tilde{G},\Lambda}\left[  \int_{t}%
^{T}\left(  L_{y}+2dL_{z}+2dL_{z}\left(  \left\vert Z_{s}^{1}\right\vert
+\left\vert Z_{s}^{2}\right\vert \right)  \right)  \hat{X}_{s}ds\right]  ,
\label{2201}%
\end{equation}
in the extended $\tilde{G}$-expectation space $(\tilde{\Omega}_{T}%
,\mathbb{L}_{\tilde{G}}^{\Lambda,1}(\tilde{\Omega}_{T}),\mathbb{\hat{E}%
}^{\tilde{G},\Lambda})$, where $\tilde{G}$ is given by (\ref{Gt}), $\Lambda$
is defined by (\ref{lamda}) with $b_{s}=b_{s}^{\varepsilon}(Y_{s}^{2}%
,Z_{s}^{1},Z_{s}^{2})$ and $d_{s}^{ij}=d_{s}^{ij,\varepsilon}(Y_{s}^{2}%
,Z_{s}^{1},Z_{s}^{2})$ and
\begin{equation}
\hat{X}_{t}:=\exp\left(  \int_{0}^{t}a_{s}^{\varepsilon}\left(  Y_{s}%
^{1},Y_{s}^{2},Z_{s}^{1}\right)  ds+\sum\nolimits_{i,j=1}^{d}\int
\nolimits_{0}^{t}c_{s}^{ij,\varepsilon}\left(  Y_{s}^{1},Y_{s}^{2},Z_{s}%
^{1}\right)  d\langle B\rangle_{s}^{ij}\right)  . \label{xh}%
\end{equation}
Since $Z^{1},Z^{2}$ are $G$-BMO martingale generators and $a_{s}^{\varepsilon
}(Y_{s}^{1},Y_{s}^{2},Z_{s}^{1}),c_{s}^{ij,\varepsilon}(Y_{s}^{1},Y_{s}%
^{2},Z_{s}^{1})$ are uniformly bounded, by Remark \ref{ReBMO} (iii), sending
$\varepsilon\rightarrow0$ in (\ref{2201}) implies that $\hat{Y}_{t}\leq0$.
\end{proof}

\subsection{Existence and uniqueness result}

With the help of previous subsection, we can now investigate the
well-posedness of the quadratic $G$-BSDEs on the infinite horizon. We first
give the comparison theorem for infinite horizon quadratic $G$-BSDEs, from
which we can also derive the uniqueness result of the equation.

\begin{remark}
\label{Resolution}For each fixed $T>0$, (\ref{0}) can be seen as a $G$-BSDE on
finite horizon $[0,T]$. Thus, if the $Y$-component of the solution to
quadratic $G$-BSDE (\ref{0}) on the infinite horizon is uniformly bounded, for
each fixed $T>0$, we know that $Z$ is a $G$-BMO martingale generator on
$[0,T]$ and $K_{t}\in\mathbb{L}_{G}^{p}(\Omega_{t})$ for $p\geq1$ according to
the estimates in Remark \ref{1101} (see also Section 3 in \cite{PZ2013} for
more details of the estimates).
\end{remark}

\begin{theorem}
\label{comparison}Let $(Y^{l},Z^{l},K^{l})\in\mathfrak{S}_{G}^{2}%
(0,\infty),l=1,2$, be the solution to infinite horizon $G$-BSDE (\ref{0}),
where the generators $f_{l},g_{l}=(g_{l}^{ij})_{i,j=1}^{d}$ satisfy
Assumptions \ref{Hf} and \ref{HI}, and suppose $|Y^{l}|\leq M_{y}$ for a
constant $M_{y}>0$. If $f_{1}\leq f_{2}$ and $g_{1}\leq g_{2}$, then
$Y^{1}\leq Y^{2}$.
\end{theorem}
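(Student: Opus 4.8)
The plan is to freeze the horizon, run the linearization together with the dissipativity estimate of Lemma~\ref{EY} on $[0,T]$, and then send $T\to\infty$, the exponential decay produced by Assumption~\ref{HI} absorbing the merely bounded terminal contribution in the limit. Fix $t\ge0$. For every $T>t$, equation~(\ref{0}) read on $[0,T]$ is a quadratic $G$-BSDE with terminal data $Y_T^1,Y_T^2$; since $|Y^l|\le M_y$, Remark~\ref{Resolution} together with Remark~\ref{1101} shows that each $Z^l$ is a $G$-BMO martingale generator on $[0,T]$ and that $K_\cdot^l\in\mathbb{L}_G^p(\Omega_\cdot)$ for all $p\ge1$, the constants in~(\ref{f1})--(\ref{f2}) depending on the data only through $M_y$ and the structural constants of Assumption~\ref{Hf}, hence being uniform in $T$.

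Put $\hat Y:=Y^1-Y^2$ and $\hat Z:=Z^1-Z^2$. Then $(\hat Y,\hat Z,K^1)$ solves on $[0,T]$ an equation of type~(\ref{dle}) with terminal value $\hat Y_T$ and with $\bar K=K^2$. Applying Lemma~\ref{linearization} to $\hat f_s=f_1(s,Y_s^1,Z_s^1)-f_2(s,Y_s^2,Z_s^2)$ and to the $\hat g^{ij}_s$, for each $\varepsilon>0$ we get, as in~(\ref{fh})--(\ref{gh}),
\[
\hat f_s=a_s^{\varepsilon}\hat Y_s+b_s^{\varepsilon}\hat Z_s+m_s^{\varepsilon}+(f_1-f_2)(s,Y_s^2,Z_s^2)
\]
and analogously for $\hat g^{ij}_s$, where~(\ref{ac}) holds, $b^{\varepsilon},d^{ij,\varepsilon}$ are $G$-BMO martingale generators whose norms are controlled, uniformly in $T$, by $\|Z^1\|_{BMO_G}+\|Z^2\|_{BMO_G}$, and $|m^{\varepsilon}_s|,|n^{ij,\varepsilon}_s|\le 2\varepsilon\big(L_y+2dL_z(1+|Z_s^1|+|Z_s^2|)\big)$. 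Because $\Phi(q)\to\infty$ as $q\downarrow1$, one can pick $q>1$ (independent of $T$) with $\|\Lambda\|_{BMO_G}\le\Phi(q)$ for the $\tilde d$-dimensional $\Lambda$ built from $(b^{\varepsilon},d^{ij,\varepsilon})$, and then $m^{\varepsilon},n^{ij,\varepsilon},(f_1-f_2)(\cdot,Y^2,Z^2),(g_1^{ij}-g_2^{ij})(\cdot,Y^2,Z^2)\in\mathcal{H}_G^p(0,T)$ for a suitable $p>\tfrac{q}{q-1}\vee2$ by Remark~\ref{ReBMO}, so that Lemma~\ref{EY} and Remark~\ref{Re28} are applicable.

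Using the first conclusion of Lemma~\ref{EY} and discarding, exactly as in the proof of Theorem~\ref{fcomparison}, the non-positive source terms generated by $f_1\le f_2$ and $g_1\le g_2$, I expect to reach an estimate in the spirit of~(\ref{2201}):
\[
\hat Y_t\le(\hat X_t)^{-1}\mathbb{\hat{E}}_{t}^{\tilde{G},\Lambda}\left[\hat X_T\hat Y_T+2\varepsilon(1+\bar{\sigma}_{\Sigma}^{2})\int_t^T\big(L_y+2dL_z+2dL_z(|Z_s^1|+|Z_s^2|)\big)\hat X_s\,ds\right],
\]
with $\hat X$ defined by~(\ref{xh}). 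By~(\ref{ac}) one has $\hat X_s/\hat X_t\le e^{-\mu(s-t)}$ for $s\ge t$, so the terminal contribution is at most $2M_y e^{-\mu(T-t)}$, while the Cauchy--Schwarz inequality combined with the uniform $G$-BMO bounds on $Z^1,Z^2$ and the energy estimate in Remark~\ref{ReBMO}(iii) (which also handles the passage from $\mathbb{\hat{E}}^{\tilde{G}}$ to $\mathbb{\hat{E}}^{\tilde{G},\Lambda}$) bounds the remaining term by $C\varepsilon$, the constant $C$ depending only on $\mu,d,L_y,L_z,\bar{\sigma}_{\Sigma}^{2},M_y$ and the uniform BMO norms, hence not on $T$ or $\varepsilon$. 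Letting $T\to\infty$ for fixed $t$ yields $\hat Y_t\le C\varepsilon$, and then $\varepsilon\to0$ gives $\hat Y_t\le0$; since $t\ge0$ was arbitrary, $Y^1\le Y^2$.

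The step I expect to be the main obstacle is keeping every quantity uniform in $T$: one must ensure that the $G$-BMO norms of $Z^1,Z^2$ on $[0,T]$, the admissible index $q$, the construction of the extended $\tilde G$-expectation space through Lemma~\ref{EY}, and the constant $C$ above do not degrade as $T\to\infty$. This is where the standing hypothesis $|Y^l|\le M_y$ is indispensable, the a priori estimates in Remark~\ref{1101} depending on the solution only through $\|Y^l\|_{\mathbb{L}_G^{\infty}}$ and the data of Assumption~\ref{Hf}; a secondary point is that $\hat Y_T$ is only bounded, not sign-controlled, so the factor $e^{-\mu(T-t)}$ coming from Assumption~\ref{HI} is essential to discard the terminal term in the limit.
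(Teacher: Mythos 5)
Your decomposition, linearization via Lemma~\ref{linearization}, appeal to Lemma~\ref{EY} and Remark~\ref{Resolution}, and the use of the decay $\hat X_s/\hat X_t\le e^{-\mu(s-t)}$ to kill the terminal term are exactly the paper's argument. The one place where you deviate is the order of the two limits, and that is where your proof has a genuine gap. You send $T\to\infty$ for fixed $\varepsilon$ and only then let $\varepsilon\to0$; this forces you to claim that the error term
$4\varepsilon dL_z(1+\bar{\sigma}_{\Sigma}^{2})\,\mathbb{\hat{E}}_{t}^{\tilde{G},\Lambda}\bigl[\int_t^T(|Z_s^1|+|Z_s^2|)e^{-\mu(s-t)}ds\bigr]$
is bounded by $C\varepsilon$ with $C$ \emph{independent of $T$}, which in turn requires $T$-uniform $G$-BMO bounds on $Z^1,Z^2$ and a $T$-uniform admissible exponent $q$ for $\Lambda$. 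You assert that the constants in (\ref{f1})--(\ref{f2}) depend only on $M_y$ and the structural constants "hence being uniform in $T$", but Remark~\ref{1101} makes no such uniformity claim, and the standard derivation of (\ref{f1}) for a quadratic $G$-BSDE on $[0,T]$ produces a bound on $E_\tau[\int_\tau^T|Z_s|^2ds]$ containing a term proportional to $T-\tau$ (coming from the bound $|f(s,y,0)|\le M_0+L_yM_y$), so $\|Z^l\|_{BMO_G}$ on $[0,T]$ may grow with $T$; then $q=q(T)\downarrow1$, $p(T)=\frac{q}{q-1}\to\infty$, and your constant $C$ degrades. You correctly identify this as "the main obstacle" but do not overcome it.

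The paper avoids the issue entirely by reversing the limits: for each \emph{fixed} $T$ one only needs the expectation $\mathbb{\hat{E}}_{t}^{\tilde{G},\Lambda}[\int_t^T(|Z_s^1|+|Z_s^2|)e^{-\mu(s-t)}ds]$ to be \emph{finite} (Remark~\ref{ReBMO}), not uniformly bounded; sending $\varepsilon\to0$ first gives $\hat Y_t\le 2M_ye^{-\mu(T-t)}$ for every $T>t$, and since $T$ is arbitrary this already yields $\hat Y_t\le0$. Your argument becomes correct, with no additional estimates required, once you perform $\varepsilon\to0$ before $T\to\infty$; as written, the $T$-uniformity step is unproven.
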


\begin{proof}
Let $\hat{Y}:=Y^{1}-Y^{2}$, $\hat{Z}:=Z^{1}-Z^{2}$. Similarly to the proof of
Theorem \ref{fcomparison}, we have, for any $0\leq t\leq T<\infty$,%
\[
\hat{Y}_{t}=\hat{Y}_{T}+\int_{t}^{T}\hat{f}_{s}ds+\sum_{i,j=1}^{d}\int_{t}%
^{T}\hat{g}_{s}^{ij}d\left\langle B\right\rangle _{s}^{ij}-\int_{t}^{T}\hat
{Z}_{s}dB_{s}-\left(  K_{T}^{1}-K_{t}^{1}\right)  +\left(  K_{T}^{2}-K_{t}%
^{2}\right)
\]
with $\hat{f}$ and $\hat{g}^{ij}$ given as (\ref{fh}) and (\ref{gh}) for any
$\varepsilon>0$ according to Lemma \ref{linearization}. By Lemma \ref{EY},
Remark \ref{Resolution} and the fact that $f_{1}\leq f_{2}$ and $g_{1}\leq
g_{2}$, one can easily check that%
\begin{align*}
\hat{Y}_{t}\leq &  (\hat{X}_{t})^{-1}\mathbb{\hat{E}}_{t}^{\tilde{G},\Lambda
}\left[  \hat{X}_{T}\hat{Y}_{T}+\int_{t}^{T}\hat{m}_{s}^{\varepsilon}\hat
{X}_{s}ds+\int_{t}^{T}\hat{n}_{s}^{ij,\varepsilon}\hat{X}_{s}d\left\langle
B\right\rangle _{s}^{ij}\right] \\
\leq &  2M_{y}(\hat{X}_{t})^{-1}\mathbb{\hat{E}}_{t}^{\tilde{G},\Lambda}%
[\hat{X}_{T}]+2\varepsilon\left(  L_{y}+2dL_{z}\right)  \left(  1+\bar{\sigma
}_{\Sigma}^{2}\right)  (\hat{X}_{t})^{-1}\mathbb{\hat{E}}_{t}^{\tilde
{G},\Lambda}\left[  \int_{t}^{T}\hat{X}_{s}ds\right] \\
&  +4\varepsilon dL_{z}\left(  1+\bar{\sigma}_{\Sigma}^{2}\right)  (\hat
{X}_{t})^{-1}\mathbb{\hat{E}}_{t}^{\tilde{G},\Lambda}\left[  \int_{t}%
^{T}\left(  \left\vert Z_{s}^{1}\right\vert +\left\vert Z_{s}^{2}\right\vert
\right)  \hat{X}_{s}ds\right]  ,
\end{align*}
where $\tilde{G}$, $\Lambda$ and $\hat{X}$ are the same as those in the proof
of Theorem \ref{fcomparison}. From (\ref{ac}), we know that $\hat{X}_{s}%
/\hat{X}_{t}\leq$ $e^{-\mu\left(  s-t\right)  }$ for all $s\geq t$. Thus,
\begin{align*}
\hat{Y}_{t}\leq &  2M_{y}e^{-\mu\left(  T-t\right)  }+\frac{2\varepsilon
\left(  L_{y}+2dL_{z}\right)  \left(  1+\bar{\sigma}_{\Sigma}^{2}\right)
}{\mu}\left(  1-e^{-\mu\left(  T-t\right)  }\right) \\
&  +4\varepsilon dL_{z}\left(  1+\bar{\sigma}_{\Sigma}^{2}\right)
\mathbb{\hat{E}}_{t}^{\tilde{G},\Lambda}\left[  \int_{t}^{T}\left(  \left\vert
Z_{s}^{1}\right\vert +\left\vert Z_{s}^{2}\right\vert \right)  e^{-\mu\left(
s-t\right)  }ds\right]  ,
\end{align*}
where $\mathbb{\hat{E}}_{t}^{\tilde{G},\Lambda}[%
%TCIMACRO{\tint _{t}^{T}}%
%BeginExpansion
{\textstyle\int_{t}^{T}}
%EndExpansion
(|Z_{s}^{1}|+|Z_{s}^{2}|)e^{-\mu\left(  s-t\right)  }ds]$ is finite according
to Remark \ref{ReBMO}. Therefore, sending $\varepsilon\rightarrow0$ gives
that
\[
\hat{Y}_{t}\leq2M_{y}e^{-\mu\left(  T-t\right)  }~\text{ for all }0\leq t\leq
T<\infty\text{,}%
\]
and, in turn, $Y^{1}\leq Y^{2}$.
\end{proof}

Now we are ready to give the main result of this section.

\begin{theorem}
\label{existence}Let Assumptions \ref{Hf} and \ref{HI} hold. Then the $G$-BSDE
(\ref{0}) on the infinite horizon admits a unique solution $(Y,Z,K)\in
\mathfrak{S}_{G}^{2}(0,\infty)$ such that $Y$ is uniformly bounded.
\end{theorem}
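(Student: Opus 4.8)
The plan is to establish existence by a horizon–truncation argument and uniqueness from the comparison theorem (Theorem \ref{comparison}). For each $n\geq1$, let $(Y^{n},Z^{n},K^{n})\in\mathfrak{S}_{G}^{2}(0,n)$ be the unique solution, furnished by Theorem \ref{QGBSDE}, of the finite horizon quadratic $G$-BSDE (\ref{fQBSDE}) on $[0,n]$ with generators $f,g^{ij}$ and zero terminal condition; we regard it as defined on $[0,\infty)$ by freezing at time $n$, or equivalently work on each $[0,T]$ with $n\geq T$. The candidate solution will be the locally uniform limit of this sequence.

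The first step is a uniform bound $\|Y^{n}\|_{\mathbb{L}_{G}^{\infty}}\leq M_{y}$ with $M_{y}$ independent of $n$. Fixing $\varepsilon>0$ and applying Lemma \ref{linearization} with $y'=0$, $z'=0$, one writes, for $h=f,g^{ij}$, $h(s,Y^{n}_{s},Z^{n}_{s})=\varphi^{\varepsilon}_{s}Y^{n}_{s}+\psi^{\varepsilon}_{s}Z^{n}_{s}+\bigl(\phi^{\varepsilon}_{s}+h(s,0,0)\bigr)$, where the coefficients satisfy (\ref{ac}) (hence (\ref{acm})), $\psi^{\varepsilon}$ is a $G$-BMO martingale generator (using that $\|Z^{n}\|_{BMO_{G}}$ is finite by Remark \ref{1101}), and $|\phi^{\varepsilon}_{s}+h(s,0,0)|\leq M_{0}+2L_{y}\varepsilon+4dL_{z}(1+|Z^{n}_{s}|)\varepsilon=:\rho_{s}$, which lies in $\mathcal{H}_{G}^{p}(0,n)$ for every $p$ by Remark \ref{ReBMO}. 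This turns the $n$-th equation into one of the form (\ref{dle}) with $\bar{K}\equiv0$, so Lemma \ref{EY} yields $|Y^{n}_{t}|\leq(1+\bar{\sigma}_{\Sigma}^{2})\,\mathbb{\hat{E}}_{t}^{\tilde{G},\Lambda}[\int_{t}^{n}\rho_{s}e^{-\mu(s-t)}\,ds]\leq(1+\bar{\sigma}_{\Sigma}^{2})(M_{0}/\mu+C\varepsilon)$, where $C$ depends on $\|Z^{n}\|_{BMO_{G}}$ and the generator data but not on $t$; letting $\varepsilon\to0$ gives $\|Y^{n}\|_{\mathbb{L}_{G}^{\infty}}\leq M_{y}:=(1+\bar{\sigma}_{\Sigma}^{2})M_{0}/\mu$ (the $\varepsilon\to0$ limit removes the possibly $n$-dependent constant $C$, so no circularity arises). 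With this uniform bound, Remark \ref{1101} (see also Remark \ref{Resolution}) provides, for each fixed $T>0$ and all $n\geq T$, $n$-independent constants $\hat{C}$ and $\tilde{C}_{p}$ with $\|Z^{n}\|_{BMO_{G}}\leq\hat{C}$ on $[0,T]$ and $\mathbb{\hat{E}}[|K^{n}_{t}|^{p}]\leq\tilde{C}_{p}$ for $t\in[0,T]$.

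Next I show that $(Y^{n})$ is locally uniformly Cauchy. For $m>n$, on $[0,n]$ the difference $Y^{m}-Y^{n}$ solves an equation of the form (\ref{dle}) with terminal value $Y^{m}_{n}-Y^{n}_{n}$, $|Y^{m}_{n}-Y^{n}_{n}|\leq M_{y}$; linearizing exactly as in the proof of Theorem \ref{fcomparison}, applying Lemma \ref{EY} under (\ref{acm}), sending $\varepsilon\to0$ and using Remark \ref{ReBMO} to keep the $(|Z^{m}|+|Z^{n}|)$-terms finite, one obtains $\sup_{t\in[0,T]}|Y^{m}_{t}-Y^{n}_{t}|\leq M_{y}e^{-\mu(n-T)}=:\delta_{n}$ deterministically. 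Hence, for each $T$, $Y^{n}\to Y$ in $\mathcal{S}_{G}^{p}(0,T)$ for all $p$, with $|Y|\leq M_{y}$. To obtain $Z^{n}\to Z$ in $\mathcal{H}_{G}^{2}(0,T;\mathbb{R}^{d})$, I apply $G$-It\^o's formula to $|Y^{m}-Y^{n}|^{2}$ on $[0,T]$ (for $n\geq T$) and take $\mathbb{\hat{E}}$, as in the a priori estimates of \cite{HLS2018,HJPS2014a}: using non-degeneracy $\int_{0}^{T}\sum_{i,j}\hat{Z}^{i}_{s}\hat{Z}^{j}_{s}\,d\langle B\rangle^{ij}_{s}\geq\underline{\sigma}^{2}\int_{0}^{T}|\hat{Z}_{s}|^{2}\,ds$ with $\hat{Z}=Z^{m}-Z^{n}$, the bound $|\hat{f}_{s}|+\sum_{i,j}|\hat{g}^{ij}_{s}|\leq L_{y}|\hat{Y}_{s}|+C(1+|Z^{m}_{s}|+|Z^{n}_{s}|)(|\hat{Y}_{s}|+|\hat{Z}_{s}|)$, Young's inequality to absorb the $|\hat{Z}_{s}|^{2}$ contributions with the uniform $G$-BMO bound, and the estimate $\mathbb{\hat{E}}[\int_{0}^{T}|\hat{Y}_{s}|\,d(-K^{m})_{s}]\leq\delta_{n}\mathbb{\hat{E}}[|K^{m}_{T}|]\leq\delta_{n}\tilde{C}_{1}$ (and likewise for $K^{n}$) for the bounded-variation terms, one arrives at $\mathbb{\hat{E}}[\int_{0}^{T}|Z^{m}_{s}-Z^{n}_{s}|^{2}\,ds]\leq C(\delta_{n}+\delta_{n}^{2})\to0$. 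Thus $Z^{n}\to Z$ in $\mathcal{H}_{G}^{2}(0,T;\mathbb{R}^{d})$.

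Finally I pass to the limit on each $[0,T]$. The uniform $G$-BMO bounds together with the local Lipschitz/quadratic structure and H\"older's inequality give $f(\cdot,Y^{n},Z^{n})\to f(\cdot,Y,Z)$ and $g^{ij}(\cdot,Y^{n},Z^{n})\to g^{ij}(\cdot,Y,Z)$ in $\mathcal{H}_{G}^{1}(0,T)$; since also $\int_{0}^{\cdot}Z^{n}_{s}\,dB_{s}\to\int_{0}^{\cdot}Z_{s}\,dB_{s}$ and $Y^{n}\to Y$, the process $K^{n}$ determined by (\ref{0}) converges in $\mathcal{S}_{G}^{p}(0,T)$ to a process $K$; passing to the limit in (\ref{0}) shows $(Y,Z,K)$ satisfies it on $[0,T]$, with $K$ nonincreasing, $K_{0}=0$, $K_{T}\in\mathbb{L}_{G}^{2}(\Omega_{T})$, and $K$ retaining the $G$-martingale property because $\mathbb{\hat{E}}_{s}$ is continuous on $\mathbb{L}_{G}^{1}$. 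As $T$ is arbitrary, $(Y,Z,K)\in\mathfrak{S}_{G}^{2}(0,\infty)$ solves (\ref{0}) with $Y$ uniformly bounded. For uniqueness, Theorem \ref{comparison} applied with $f_{1}=f_{2}=f$, $g_{1}=g_{2}=g$ gives $Y^{1}\leq Y^{2}$ and $Y^{2}\leq Y^{1}$, so $Y^{1}=Y^{2}$; then $G$-It\^o's formula applied to $|Y^{1}-Y^{2}|^{2}\equiv0$ over each $[0,T]$ forces $Z^{1}=Z^{2}$ in $\mathcal{H}_{G}^{2}(0,T;\mathbb{R}^{d})$, whence $K^{1}=K^{2}$. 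The step I expect to be the main obstacle is the $\mathcal{H}_{G}^{2}$-convergence of $Z^{n}$ on a fixed $[0,T]$: one must bound the quadratic terms uniformly in $n$ through the $n$-independent $G$-BMO estimate on $Z^{n}$ and handle the $\hat{Y}\,d\hat{K}$ term via the $\mathbb{L}_{G}^{p}$-bounds on $K^{n}$, the decisive point being that $Y^{m}-Y^{n}$ is uniformly exponentially small on $[0,T]$ thanks to the dissipativity in Assumption \ref{HI}.
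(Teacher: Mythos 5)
Your proposal is correct and follows essentially the same route as the paper: truncation to $[0,n]$ with zero terminal data, a uniform bound on $Y^{n}$ via Lemma \ref{linearization} and Lemma \ref{EY} with $\varepsilon\to0$, exponential-in-$n$ smallness of $Y^{m}-Y^{n}$ from the dissipativity, $\mathcal{H}_{G}^{2}$-convergence of $Z^{n}$ (the paper simply cites Proposition 3.7 of \cite{HLS2018}, which is the It\^o-formula estimate you sketch), construction of $K$ as the limit of $K^{n}$, and uniqueness from Theorem \ref{comparison}. The only cosmetic difference is that you estimate $Y^{m}-Y^{n}$ on $[0,n]$ using the terminal bound $|Y^{m}_{n}|\leq M_{y}$, whereas the paper extends $(Y^{n},Z^{n},K^{n})$ to $[0,m]$ and absorbs the discrepancy into the generator via the indicator $\mathbf{1}_{s>n}$; both yield the same decay.
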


\begin{proof}
The uniqueness of the solution is a direct consequence of Theorem
\ref{comparison}.

Now we prove the existence based on the solvability of quadratic $G$-BSDEs on
finite horizon. For $n\geq1$, consider the following $G$-BSDE on $[0,n]$: for
$0\leq t\leq n$,
\begin{equation}
Y_{t}^{n}=\int_{t}^{n}f\left(  s,Y_{s}^{n},Z_{s}^{n}\right)  ds+\sum
_{i,j=1}^{d}\int_{t}^{n}g^{ij}\left(  s,Y_{s}^{n},Z_{s}^{n}\right)
d\left\langle B\right\rangle _{s}^{ij}-\int_{t}^{n}Z_{s}^{n}dB_{s}-\left(
K_{n}^{n}-K_{t}^{n}\right)  . \label{nGBSDE}%
\end{equation}
From Theorem \ref{QGBSDE} and Remark \ref{1101}, we know that it has a unique
solution $(Y^{n},Z^{n},K^{n})\in\mathfrak{S}_{G}^{2}(0,n)$ satisfying
(\ref{f1}) and (\ref{f2}). In the remainder of the proof, we begin by
verifying the uniform boundedness of $Y^{n}$ for all $n\geq1$, then
investigate the convergence of the sequence $(Y^{n},Z^{n},K^{n})$, and finally
demonstrate that the limit triplet is the desired solution to the $G$-BSDE
(\ref{0}).

First we show the boundedness of $Y^{n}$ for all $n\geq1$. Applying Lemma
\ref{linearization}, we can write (\ref{nGBSDE}) as a linear $G$-BSDE: for any
$\varepsilon>0$, we have%
\begin{align*}
f\left(  s,Y_{s}^{n},Z_{s}^{n}\right)   &  =a_{s}^{\varepsilon}\left(
Y_{s}^{n},0,Z_{s}^{n}\right)  Y_{s}^{n}+b_{s}^{\varepsilon}\left(  0,Z_{s}%
^{n},0\right)  Z_{s}^{n}+m_{s}^{\varepsilon}\left(  Y_{s}^{n},0,Z_{s}%
^{n},0\right)  +f\left(  s,0,0\right)  ,\\
g^{ij}\left(  s,Y_{s}^{n},Z_{s}^{n}\right)   &  =c_{s}^{ij,\varepsilon}\left(
Y_{s}^{n},0,Z_{s}^{n}\right)  Y_{s}^{n}+d_{s}^{ij,\varepsilon}\left(
0,Z_{s}^{n},0\right)  Z_{s}^{n}+n_{s}^{ij,\varepsilon}\left(  Y_{s}%
^{n},0,Z_{s}^{n},0\right)  +g^{ij}\left(  s,0,0\right)  ,
\end{align*}
where the coefficients satisfy the properties in Lemma \ref{linearization}. It
follows from Lemma \ref{EY} that
\begin{align*}
\left\vert Y_{t}^{n}\right\vert  &  \leq\left(  1+\bar{\sigma}_{\Sigma}%
^{2}\right)  \mathbb{\hat{E}}_{t}^{\tilde{G},\Lambda}\left[  \int_{t}%
^{n}\left(  2\varepsilon\left(  L_{y}+2dL_{z}\right)  +4dL_{z}\varepsilon
\left\vert Z_{s}^{n}\right\vert +M_{0}\right)  e^{-\mu\left(  s-t\right)
}ds\right] \\
&  \leq\frac{\left(  1+\bar{\sigma}_{\Sigma}^{2}\right)  \left(
2\varepsilon\left(  L_{y}+2dL_{z}\right)  +M_{0}\right)  }{\mu}\left(
1-e^{-\mu\left(  n-t\right)  }\right)  +4dL_{z}\varepsilon\left(
1+\bar{\sigma}_{\Sigma}^{2}\right)  \mathbb{\hat{E}}_{t}^{\tilde{G},\Lambda
}\left[  \int_{t}^{n}\left\vert Z_{s}^{n}\right\vert e^{-\mu\left(
s-t\right)  }ds\right]  ,
\end{align*}
where $\tilde{G}$ is given by (\ref{Gt}) and $\Lambda$ by (\ref{lamda}) with
$b=b_{s}^{\varepsilon}(0,Z_{s}^{n},0)$ and $d^{ij}=d_{s}^{ij,\varepsilon
}(0,Z_{s}^{n},0)$. Note that $\mathbb{\hat{E}}_{t}^{\tilde{G},\Lambda}[%
%TCIMACRO{\tint _{t}^{n}}%
%BeginExpansion
{\textstyle\int_{t}^{n}}
%EndExpansion
|Z_{s}^{n}|e^{-\mu\left(  s-t\right)  }ds]$ is finite according to Remark
\ref{ReBMO}. Thus, sending $\varepsilon\rightarrow0$ gives that
\[
\left\vert Y_{t}^{n}\right\vert \leq\frac{\left(  1+\bar{\sigma}_{\Sigma}%
^{2}\right)  M_{0}}{\mu}\left(  1-e^{-\mu\left(  n-t\right)  }\right)
\leq\frac{\left(  1+\bar{\sigma}_{\Sigma}^{2}\right)  M_{0}}{\mu}~\text{ for
all }n\geq1.
\]

Next we prove the convergence of $G$-BSDEs (\ref{nGBSDE}) as $n\rightarrow
\infty$, which allows us to obtain the solution to the $G$-BSDE (\ref{0}). For
any $n\geq1$, define%
\[
Y_{t}^{n}=0\text{, }~Z_{t}^{n}=0~\text{ and }~K_{t}^{n}=K_{n}^{n}~\text{ for
}t\geq n.
\]
Then, for $m\geq n\geq1$, $G$-BSDE (\ref{nGBSDE}) can be written as an
equation on $[0,m]$:%
\begin{align*}
Y_{t}^{n}=  &  \int_{t}^{m}\left(  f\left(  s,Y_{s}^{n},Z_{s}^{n}\right)
-f\left(  s,0,0\right)  \mathbf{1}_{s>n}\right)  ds+\sum_{i,j=1}^{d}\int
_{t}^{m}\left(  g^{ij}\left(  s,Y_{s}^{n},Z_{s}^{n}\right)  -g^{ij}\left(
s,0,0\right)  \mathbf{1}_{s>n}\right)  d\left\langle B\right\rangle _{s}%
^{ij}\\
&  -\int_{t}^{m}Z_{s}^{n}dB_{s}-\left(  K_{m}^{n}-K_{t}^{n}\right)  .
\end{align*}
On the other hand, consider the counterpart of (\ref{nGBSDE}) on $[0,m]$ with
solution $(Y^{m},Z^{m},K^{m})$ and denote $\tilde{Y}:=Y^{m}-Y^{n}$ and
$\tilde{Z}:=Z^{m}-Z^{n}$. Then we have%
\[
\tilde{Y}_{t}=\int_{t}^{m}\tilde{f}_{s}ds+\sum_{i,j=1}^{d}\int_{t}^{m}%
\tilde{g}_{s}^{ij}d\left\langle B\right\rangle _{s}^{ij}-\int_{t}^{m}\tilde
{Z}_{s}dB_{s}-\left(  K_{m}^{m}-K_{t}^{m}\right)  +\left(  K_{m}^{n}-K_{t}%
^{n}\right)  ,
\]
with%
\begin{align*}
\tilde{f}_{s}  &  :=f\left(  s,Y_{s}^{m},Z_{s}^{m}\right)  -f\left(
s,Y_{s}^{n},Z_{s}^{n}\right)  +f\left(  s,0,0\right)  \mathbf{1}_{s>n},\\
\tilde{g}_{s}^{ij}  &  :=g^{ij}\left(  s,Y_{s}^{m},Z_{s}^{m}\right)
-g^{ij}\left(  s,Y_{s}^{n},Z_{s}^{n}\right)  +g\left(  s,0,0\right)
\mathbf{1}_{s>n}.
\end{align*}
Using Lemmas \ref{linearization} and \ref{EY} again, one can easily check
that, for any $\varepsilon>0$,%
\begin{align}
|\tilde{Y}_{t}|\leq &  \left(  1+\bar{\sigma}_{\Sigma}^{2}\right)
\mathbb{\hat{E}}_{t}^{\tilde{G},\Lambda}\left[  \int_{t}^{m}\left(
2\varepsilon\left(  L_{y}+2dL_{z}\right)  +4\varepsilon dL_{z}\left(
\left\vert Z_{s}^{n}\right\vert +Z_{s}^{m}\right)  +M_{0}\mathbf{1}%
_{s>n}\right)  e^{-\mu\left(  s-t\right)  }ds\right] \nonumber\\
\leq &  \frac{2\varepsilon\left(  1+\bar{\sigma}_{\Sigma}^{2}\right)  \left(
L_{y}+2dL_{z}\right)  }{\mu}\left(  1-e^{-\mu\left(  m-t\right)  }\right)
+4\varepsilon\left(  1+\bar{\sigma}_{\Sigma}^{2}\right)  dL_{z}\mathbb{\hat
{E}}_{t}^{\tilde{G},\Lambda}\left[  \int_{t}^{m}\left(  \left\vert Z_{s}%
^{n}\right\vert +\left\vert Z_{s}^{m}\right\vert \right)  e^{-\mu\left(
s-t\right)  }ds\right] \nonumber\\
&  +\frac{\left(  1+\bar{\sigma}_{\Sigma}^{2}\right)  M_{0}e^{\mu t}}{\mu
}\left(  e^{-\mu n}-e^{-\mu m}\right)  , \label{2202}%
\end{align}
where $\tilde{G}$ is given by (\ref{Gt}) and $\Lambda$ by (\ref{lamda}) with
$b_{s}=b_{s}^{\varepsilon}(Y_{s}^{n},Z_{s}^{m},Z_{s}^{n})$ and $d_{s}%
^{ij}=d_{s}^{ij,\varepsilon}(Y_{s}^{n},Z_{s}^{m},Z_{s}^{n})$. Since
$Z^{n},Z^{m}$ are $G$-BMO martingale generators, according to Remark
\ref{ReBMO}, we have%
\[
\mathbb{\hat{E}}_{t}^{\tilde{G},\Lambda}\left[  \int_{t}^{m}\left(  \left\vert
Z_{s}^{n}\right\vert +\left\vert Z_{s}^{m}\right\vert \right)  e^{-\mu\left(
s-t\right)  }ds\right]  <\infty\text{.}%
\]
Thus, sending $\varepsilon\rightarrow0$ in (\ref{2202}) yields that%
\[
|\tilde{Y}_{t}|\leq\frac{\left(  1+\bar{\sigma}_{\Sigma}^{2}\right)
M_{0}e^{\mu t}}{\mu}\left(  e^{-\mu n}-e^{-\mu m}\right)  ,
\]
which implies that, for $p\geq1$,%
\[
\lim_{m,n\rightarrow\infty}\mathbb{\hat{E}}\left[  \sup_{t\in\left[
0,T\right]  }\left\vert Y_{t}^{m}-Y_{t}^{n}\right\vert ^{p}\right]  =0~\text{
for }T>0\text{.}%
\]
Therefore, there exists a $Y\in\mathcal{S}_{G}^{p}(0,\infty)$ such that
\begin{equation}
\left\vert Y\right\vert \leq\frac{\left(  1+\bar{\sigma}_{\Sigma}^{2}\right)
M_{0}}{\mu}~\text{ \ and }~\lim_{n\rightarrow\infty}\mathbb{\hat{E}}\left[
\sup_{t\in\left[  0,T\right]  }\left\vert Y_{t}^{n}-Y_{t}\right\vert
^{p}\right]  =0~\text{ for }T\geq0. \label{Y}%
\end{equation}
Recalling Proposition 3.7 in \cite{HLS2018}, for any $2\leq p^{\prime}<2p$ and
any $T>0$, there exists a constant $C>0$ depending on $G$ and $p$ such that
\[
\mathbb{\hat{E}}\left[  \left(  \int_{0}^{T}\left\vert Z_{t}^{m}-Z_{t}%
^{n}\right\vert ^{2}dt\right)  ^{p^{\prime}/2}\right]  \leq C\mathbb{\hat{E}%
}\left[  \sup_{t\in\left[  0,T\right]  }\left\vert Y_{t}^{m}-Y_{t}%
^{n}\right\vert ^{p}\right]  ^{p^{\prime}/2p}\rightarrow0~\text{ as
}m,n\rightarrow\infty.
\]
It follows that there exists a $Z\in\mathcal{H}_{G}^{p^{\prime}}%
(0,\infty;\mathbb{R}^{d})$ such that%
\begin{equation}
\lim_{n\rightarrow\infty}\mathbb{\hat{E}}\left[  \left(  \int_{0}%
^{T}\left\vert Z_{t}^{n}-Z_{t}\right\vert ^{2}dt\right)  ^{p^{\prime}%
/2}\right]  =0~\text{ for }T\geq0. \label{Z}%
\end{equation}

For above $Y$ and $Z$, define%
\[
K_{t}:=Y_{t}-Y_{0}+\int_{0}^{t}f\left(  s,Y_{s},Z_{s}\right)  ds+\sum
_{i,j=1}^{d}\int_{0}^{t}g^{ij}\left(  s,Y_{s},Z_{s}\right)  d\left\langle
B\right\rangle _{s}^{ij}-\int_{0}^{t}Z_{s}dB_{s}%
\]
and, in turn, we have%
\begin{align}
&  \left\vert K_{t}^{n}-K_{t}\right\vert \leq\left\vert Y_{t}^{n}%
-Y_{t}\right\vert +\left\vert Y_{0}^{n}-Y_{0}\right\vert \label{07}\\
&  +\left(  1+\bar{\sigma}_{\Sigma}^{2}\right)  \int_{0}^{t}\left(
L_{y}\left\vert Y_{s}^{n}-Y_{s}\right\vert +L_{z}\left(  1+\left\vert
Z_{s}^{n}\right\vert +\left\vert Z_{s}\right\vert \right)  \left\vert
Z_{s}^{n}-Z_{s}\right\vert \right)  ds+\left\vert \int_{0}^{t}\left(
Z_{s}^{n}-Z_{s}\right)  dB_{s}\right\vert .\nonumber
\end{align}
Note that, for $2\leq p^{\prime}\leq2p,$%
\begin{equation}
\mathbb{\hat{E}}\left[  \left(  \int_{0}^{t}\left\vert Y_{s}^{n}%
-Y_{s}\right\vert ds\right)  ^{p^{\prime}/2}\right]  \leq t^{^{p^{\prime}/2}%
}\mathbb{\hat{E}}\left[  \sup_{s\in\left[  0,t\right]  }\left\vert Y_{s}%
^{n}-Y_{s}\right\vert ^{p^{\prime}/2}\right]  , \label{0701}%
\end{equation}
and, from Burkholder-Davis-Gundy inequality,%
\begin{equation}
\mathbb{\hat{E}}\left[  \left\vert \int_{0}^{t}\left(  Z_{s}^{n}-Z_{s}\right)
dB_{s}\right\vert ^{p^{\prime}/2}\right]  \leq C_{G,p^{\prime}}\mathbb{\hat
{E}}\left[  \left(  \int_{0}^{t}\left\vert Z_{s}^{n}-Z_{s}\right\vert
^{2}ds\right)  ^{p^{\prime}/4}\right]  . \label{0702}%
\end{equation}
Moreover, by H\"{o}lder's inequality, we have%
\begin{align}
&  \mathbb{\hat{E}}\left[  \left(  \int_{0}^{t}\left(  1+\left\vert Z_{s}%
^{n}\right\vert +\left\vert Z_{s}\right\vert \right)  \left\vert Z_{s}%
^{n}-Z_{s}\right\vert ds\right)  ^{p^{\prime}/2}\right] \nonumber\\
\leq &  \mathbb{\hat{E}}\left[  \left(  \int_{0}^{t}\left(  1+\left\vert
Z_{s}^{n}-Z_{s}\right\vert +2\left\vert Z_{s}\right\vert \right)  \left\vert
Z_{s}^{n}-Z_{s}\right\vert ds\right)  ^{p^{\prime}/2}\right] \nonumber\\
\leq &  C\Bigg(\mathbb{\hat{E}}\left[  \left(  \int_{0}^{t}\left\vert
Z_{s}^{n}-Z_{s}\right\vert ds\right)  ^{p^{\prime}/2}\right]  +\mathbb{\hat
{E}}\left[  \left(  \int_{0}^{t}\left\vert Z_{s}^{n}-Z_{s}\right\vert
^{2}ds\right)  ^{p^{\prime}/2}\right]  \Bigg.\nonumber\\
&  \Bigg.+\mathbb{\hat{E}}\left[  \left(  \int_{0}^{t}\left\vert
Z_{s}\right\vert ^{2}ds\right)  ^{p^{\prime}/2}\right]  ^{1/2}\mathbb{\hat{E}%
}\left[  \left(  \int_{0}^{t}\left\vert Z_{s}^{n}-Z_{s}\right\vert
^{2}ds\right)  ^{p^{\prime}/2}\right]  ^{1/2}\Bigg). \label{0703}%
\end{align}
Thus, combining (\ref{Y})-(\ref{0703}), we deduce that%
\[
\lim_{n\rightarrow\infty}\mathbb{\hat{E}}\left[  \left\vert K_{t}^{n}%
-K_{t}\right\vert ^{p^{\prime}/2}\right]  =0~\text{ for }t\geq0\text{,}%
\]
namely, $K_{t}\in\mathbb{L}_{G}^{p^{\prime}/2}(\Omega_{t})$ for $t\geq0$.
Following the argument in Theorem 3.1 in \cite{HW2018}, we derive that $K$ is
a nonincreasing $G$-martingale. Finally, since $p$ and $p^{\prime}$ can be
arbitrary, we conclude that $(Y,Z,K)\in\mathfrak{S}_{G}^{2}(0,\infty)$
satisfies $G$-BSDE (\ref{0}).
\end{proof}

\begin{remark}
Using above argument, we can also deal with following type of Markovian
$G$-BSDEs in the infinite horizon: for any $0\leq t\leq T<\infty$,%
\begin{equation}
Y_{t}=Y_{T}+\int_{t}^{T}f\left(  X_{s},Y_{s},Z_{s}\right)  ds+\sum_{i,j=1}%
^{d}\int_{t}^{T}g^{ij}\left(  X_{s},Y_{s},Z_{s}\right)  d\left\langle
B\right\rangle _{s}^{ij}-\int_{t}^{T}Z_{s}dB_{s}-\left(  K_{T}-K_{t}\right)
,\label{1213}%
\end{equation}
where $X$ is the solution to a well-posed $G$-SDE. In fact, based on the
solvability of Markovian $G$-BSDEs on the finite horizon obtained in
\cite[Lemma 6.1]{CT2020}, the solution to (\ref{1213}) can be constructed via
a same approach in the proof of Theorem \ref{existence}. Under different
conditions, this type of infinite horizon $G$-BSDEs was also studied by
\cite{SW2024} using a truncation method, where the $Z$-component of the
solution is indeed bounded.
\end{remark}

\section*{Acknowledgements}
\addcontentsline{toc}{section}{Acknowledgements}
Yiqing Lin acknowledges the financial support from the National Natural Science Foundation of China (No. 12371473 and No. 12326603).  Falei Wang acknowledges the financial support from the National Natural Science Foundation of China (No. 12031009 and No. 12171280) and the Natural Science Foundation of Shandong Province (No. ZR2021YQ01 and No. ZR2022JQ01).

\addcontentsline{toc}{section}{References}

\end{document}